\documentclass[english]{article}
\usepackage[letterpaper, margin=1.2in]{geometry}
\usepackage{babel}
\usepackage{verbatim}
\usepackage{mathtools}
\usepackage{booktabs}
\usepackage{enumitem}
\usepackage{bm}
\usepackage{algorithm, algpseudocode, multirow}
\providecommand{\algorithmname}{Algorithm}
\floatname{algorithm}{\protect\algorithmname}
\floatstyle{ruled}
\newfloat{function}{tbp}{lofn}
\floatname{function}{Function}
\usepackage{amsmath}
\numberwithin{equation}{section}
\usepackage{amsthm}
\usepackage{amssymb}
\usepackage{minitoc}
\usepackage{color,xcolor}
\usepackage{wrapfig}
\usepackage{pifont}
\usepackage[numbers]{natbib}
\usepackage[pdfusetitle, colorlinks=false]{hyperref} 
\usepackage[all]{hypcap}
\usepackage{cleveref}
\crefname{ALG@line}{line}{lines}
\Crefname{ALG@line}{Line}{Lines}
\makeatletter
\@ifpackageloaded{hyperref}{
    \DeclareRobustCommand{\theHALG@line}{line.\thealgorithm.\thefunction.\arabic{ALG@line}}
}{
}
\makeatother

\makeatletter
\theoremstyle{definition}
\newtheorem{defn}{\protect\definitionname}[]
\theoremstyle{plain}
\newtheorem{thm}{\protect\theoremname}[section]
\theoremstyle{plain}
\newtheorem{proposition}{\protect\propositionname}[section]
\theoremstyle{plain}
\newtheorem{assumption}{\protect\assumptionname}
\theoremstyle{plain}
\newtheorem{lem}{\protect\lemmaname}[section]
\theoremstyle{remark}

\theoremstyle{definition}

\providecommand{\examplename}{Example}
\theoremstyle{corollary}
\newtheorem{corollary}{\protect\corollaryname}[section]

\providecommand{\assumptionname}{Assumption}
\providecommand{\definitionname}{Definition}
\providecommand{\lemmaname}{Lemma}
\providecommand{\propositionname}{Proposition}
\providecommand{\remarkname}{Remark}
\providecommand{\theoremname}{Theorem}
\providecommand{\corollaryname}{Corollary}

\crefdefaultlabelformat{#2\textbf{#1}#3} %
\crefname{section}{\textbf{section}}{\textbf{sections}}
\Crefname{section}{\textbf{Section}}{\textbf{Sections}}
\crefname{thm}{\textbf{Theorem}}{\textbf{theorems}}
\Crefname{thm}{\textbf{Theorem}}{\textbf{Theorems}}
\crefname{lem}{\textbf{Lemma}}{\textbf{lemmas}}
\Crefname{lem}{\textbf{Lemma}}{\textbf{Lemmas}}
\crefname{prop}{\textbf{proposition}}{\textbf{propositions}}
\Crefname{prop}{\textbf{Proposition}}{\textbf{Propositions}}
\crefname{algorithm}{\textbf{algorithm}}{\textbf{algorithms}}
\Crefname{algorithm}{\textbf{Algorithm}}{\textbf{Algorithms}}
\crefname{function}{\textbf{function}}{\textbf{functions}}
\Crefname{function}{\textbf{Function}}{\textbf{Functions}}
\crefname{coro}{\textbf{Corollary}}{\textbf{corollaries}}
\Crefname{coro}{\textbf{Corollary}}{\textbf{corollaries}}
\crefname{defn}{\textbf{Definition}}{\textbf{definitions}}
\Crefname{defn}{\textbf{Definition}}{\textbf{definitions}}
\crefname{table}{\textbf{Table}}{\textbf{tables}}
\Crefname{table}{\textbf{Table}}{\textbf{tables}}
\crefname{figure}{\textbf{Figure}}{\textbf{figures}}
\Crefname{figure}{\textbf{Figure}}{\textbf{figures}}
\crefname{exple}{\textbf{Example}}{\textbf{examples}}
\Crefname{exple}{\textbf{Example}}{\textbf{examples}}
\Crefname{assumption}{\textbf{Assumption}}{\textbf{Assumptions}}
\crefname{assumption}{\textbf{Assumption}}{\textbf{Assumptions}}
\Crefname{rem}{\textbf{Remark}}{\textbf{Remarks}}
\crefname{rem}{\textbf{Remark}}{\textbf{Remarks}}

\usepackage[most]{tcolorbox}
\usepackage{fancybox,ulem,subfig}
\tcbuselibrary{breakable,theorems,skins}

\newtcbtheorem[number within=section]{exbox}{Example}{
  enhanced,
  colback=gray!3,           
  colframe=gray!70!black,   
  boxrule=0.6pt,
  arc=2mm,                  
  left=8pt,right=8pt,top=8pt,bottom=8pt,
  fonttitle=\bfseries,       
  attach boxed title to top left={yshift=-1mm, xshift=2mm},
}{ex}

\providecommand{\corollaryname}{Corollary}

\makeatother

\begin{document}
\global\long\def\inprod#1#2{\left\langle #1,#2\right\rangle }%

\global\long\def\inner#1#2{\left\langle #1,#2\right\rangle }%

\global\long\def\binner#1#2{\big\langle#1,#2\big\rangle}%

\global\long\def\norm#1{\left\Vert #1\right\Vert }%

\global\long\def\bnorm#1{\big\Vert#1\big\Vert}%

\global\long\def\Bnorm#1{\Big\Vert#1\Big\Vert}%

\global\long\def\red#1{\textcolor{red}{#1}}%

\global\long\def\blue#1{\textcolor{blue}{#1}}%

\global\long\def\brbra#1{\left(#1\right)}%

\global\long\def\Brbra#1{\left(#1\right)}%

\global\long\def\rbra#1{(#1)}%

\global\long\def\lrbra#1{\left(#1\right)}%

\global\long\def\sbra#1{[#1]}%

\global\long\def\bsbra#1{\left[#1\right]}%

\global\long\def\Bsbra#1{\Big[#1\Big]}%

\global\long\def\abs#1{\vert#1\vert}%

\global\long\def\babs#1{\big\vert#1\big\vert}%

\global\long\def\lrabs#1{\left|#1\right|}%

\global\long\def\cbra#1{\{#1\}}%

\global\long\def\bcbra#1{\left\{  #1\right\}  }%

\global\long\def\Bcbra#1{\left\{  #1\right\}  }%

\global\long\def\matr#1{\bm{#1}}%

\global\long\def\til#1{\tilde{#1}}%

\global\long\def\wtil#1{\widetilde{#1}}%

\global\long\def\wh#1{\widehat{#1}}%

\global\long\def\mcal#1{\mathcal{#1}}%

\global\long\def\mbb#1{\mathbb{#1}}%

\global\long\def\mtt#1{\mathtt{#1}}%

\global\long\def\ttt#1{\texttt{#1}}%

\global\long\def\dtxt{\textrm{d}}%

\global\long\def\aeq{\overset{(a)}{=}}%

\global\long\def\bignorm#1{\bigl\Vert#1\bigr\Vert}%

\global\long\def\Bignorm#1{\Bigl\Vert#1\Bigr\Vert}%

\global\long\def\rmn#1#2{\mathbb{R}^{#1\times#2}}%

\global\long\def\deri#1#2{\frac{d#1}{d#2}}%

\global\long\def\pderi#1#2{\frac{\partial#1}{\partial#2}}%

\global\long\def\limk{\lim_{k\rightarrow\infty}}%

\global\long\def\trans{\textrm{T}}%

\global\long\def\onebf{\mathbf{1}}%

\global\long\def\zerobf{\mathbf{0}}%

\global\long\def\zero{\bm{0}}%

% expectation

\global\long\def\Euc{\mathrm{E}}%

\global\long\def\Expe{\mathbb{E}}%

\global\long\def\rank{\mathrm{rank}}%

\global\long\def\range{\mathrm{range}}%

\global\long\def\diam{\mathrm{diam}}%

\global\long\def\epi{\mathrm{epi} }%

\global\long\def\inte{\operatornamewithlimits{int}}%

\global\long\def\dist{\operatornamewithlimits{dist}}%

\global\long\def\proj{\operatornamewithlimits{Proj}}%

\global\long\def\cov{\mathrm{Cov}}%

\global\long\def\argmin{\operatornamewithlimits{argmin}}%

\global\long\def\argmax{\operatornamewithlimits{argmax}}%

\global\long\def\where{\operatornamewithlimits{where}}%

\global\long\def\conv{\operatornamewithlimits{conv}}%

\global\long\def\tr{\operatornamewithlimits{tr}}%

\global\long\def\dist{\operatorname{dist}}%

\global\long\def\sign{\operatornamewithlimits{sign}}%

\global\long\def\prob{\mathrm{Prob}}%

\global\long\def\st{\operatornamewithlimits{s.t.}}%

\global\long\def\dom{\mathrm{dom}}%

\global\long\def\prox{\mathrm{prox}}%

\global\long\def\diag{\mathrm{diag}}%

\global\long\def\and{\mathrm{and}}%

\global\long\def\as{\textup{a.s.}}%

\global\long\def\ae{\textup{a.e.}}%

\global\long\def\Var{\operatornamewithlimits{Var}}%

\global\long\def\Cov{\operatornamewithlimits{Cov}}%

\global\long\def\raw{\rightarrow}%

\global\long\def\law{\leftarrow}%

\global\long\def\Raw{\Rightarrow}%

\global\long\def\Law{\Leftarrow}%

\global\long\def\vep{\varepsilon}%

\global\long\def\dom{\operatornamewithlimits{dom}}%

\global\long\def\err{\operatorname{err}}%

\global\long\def\soc{\operatorname{soc}}%

\global\long\def\rsoc{\operatorname{rsoc}}%

\global\long\def\tsum{{\textstyle {\sum}}}%

\global\long\def\Cbb{\mathbb{C}}%

\global\long\def\Ebb{\mathbb{E}}%

\global\long\def\Fbb{\mathbb{F}}%

\global\long\def\Nbb{\mathbb{N}}%

\global\long\def\Rbb{\mathbb{R}}%

\global\long\def\extR{\widebar{\mathbb{R}}}%

\global\long\def\Pbb{\mathbb{P}}%

\global\long\def\Mrm{\mathrm{M}}%

\global\long\def\Acal{\mathcal{A}}%

\global\long\def\Bcal{\mathcal{B}}%

\global\long\def\Ccal{\mathcal{C}}%

\global\long\def\Dcal{\mathcal{D}}%

\global\long\def\Ecal{\mathcal{E}}%

\global\long\def\Fcal{\mathcal{F}}%

\global\long\def\Gcal{\mathcal{G}}%

\global\long\def\Hcal{\mathcal{H}}%

\global\long\def\Ical{\mathcal{I}}%

\global\long\def\Kcal{\mathcal{K}}%

\global\long\def\Lcal{\mathcal{L}}%

\global\long\def\Mcal{\mathcal{M}}%

\global\long\def\Ncal{\mathcal{N}}%

\global\long\def\Ocal{\mathcal{O}}%

\global\long\def\Pcal{\mathcal{P}}%

\global\long\def\Scal{\mathcal{S}}%

\global\long\def\Tcal{\mathcal{T}}%

\global\long\def\Xcal{\mathcal{X}}%

\global\long\def\Ycal{\mathcal{Y}}%

\global\long\def\Zcal{\mathcal{Z}}%

\global\long\def\i{i}%
% Use bold text symbol for vector and matrix

\global\long\def\abf{\mathbf{a}}%

\global\long\def\bbf{\mathbf{b}}%

\global\long\def\cbf{\mathbf{c}}%

\global\long\def\ebf{\mathbf{e}}%

\global\long\def\fbf{\mathbf{f}}%

\global\long\def\hbf{\mathbf{h}}%

\global\long\def\qbf{\mathbf{q}}%

\global\long\def\gbf{\mathbf{g}}%

\global\long\def\lambf{\bm{\lambda}}%

\global\long\def\alphabf{\bm{\alpha}}%

\global\long\def\sigmabf{\bm{\sigma}}%

\global\long\def\thetabf{\bm{\theta}}%

\global\long\def\deltabf{\bm{\delta}}%

\global\long\def\lbf{\mathbf{l}}%

\global\long\def\ubf{\mathbf{u}}%

\global\long\def\pbf{\mathbf{\mathbf{p}}}%

\global\long\def\vbf{\mathbf{v}}%

\global\long\def\wbf{\mathbf{w}}%

\global\long\def\xbf{\mathbf{x}}%

\global\long\def\ybf{\mathbf{y}}%

\global\long\def\zbf{\mathbf{z}}%

\global\long\def\dbf{\mathbf{d}}%

\global\long\def\Wbf{\mathbf{W}}%

\global\long\def\Abf{\mathbf{A}}%

\global\long\def\Gbf{\mathbf{G}}%

\global\long\def\Ubf{\mathbf{U}}%

\global\long\def\Pbf{\mathbf{P}}%

\global\long\def\Ibf{\mathbf{I}}%

\global\long\def\Ebf{\mathbf{E}}%

\global\long\def\Mbf{\mathbf{M}}%

\global\long\def\Dbf{\mathbf{D}}%

\global\long\def\Qbf{\mathbf{Q}}%

\global\long\def\Lbf{\mathbf{L}}%

\global\long\def\Pbf{\mathbf{P}}%

\global\long\def\Xbf{\mathbf{X}}%
% Use bold symbol for vector and matrix

\global\long\def\abm{\bm{a}}%

\global\long\def\bbm{\bm{b}}%

\global\long\def\cbm{\bm{c}}%

\global\long\def\dbm{\bm{d}}%

\global\long\def\ebm{\bm{e}}%

\global\long\def\fbm{\bm{f}}%

\global\long\def\gbm{\bm{g}}%

\global\long\def\hbm{\bm{h}}%

\global\long\def\pbm{\bm{p}}%

\global\long\def\qbm{\bm{q}}%

\global\long\def\rbm{\bm{r}}%

\global\long\def\sbm{\bm{s}}%

\global\long\def\tbm{\bm{t}}%

\global\long\def\ubm{\bm{u}}%

\global\long\def\vbm{\bm{v}}%

\global\long\def\wbm{\bm{w}}%

\global\long\def\xbm{\bm{x}}%

\global\long\def\ybm{\bm{y}}%

\global\long\def\zbm{\bm{z}}%

\global\long\def\Abm{\bm{A}}%

\global\long\def\Bbm{\bm{B}}%

\global\long\def\Cbm{\bm{C}}%

\global\long\def\Dbm{\bm{D}}%

\global\long\def\Ebm{\bm{E}}%

\global\long\def\Fbm{\bm{F}}%

\global\long\def\Gbm{\bm{G}}%

\global\long\def\Hbm{\bm{H}}%

\global\long\def\Ibm{\bm{I}}%

\global\long\def\Jbm{\bm{J}}%

\global\long\def\Lbm{\bm{L}}%

\global\long\def\Obm{\bm{O}}%

\global\long\def\Pbm{\bm{P}}%

\global\long\def\Qbm{\bm{Q}}%

\global\long\def\Rbm{\bm{R}}%

\global\long\def\Ubm{\bm{U}}%

\global\long\def\Vbm{\bm{V}}%

\global\long\def\Wbm{\bm{W}}%

\global\long\def\Xbm{\bm{X}}%

\global\long\def\Ybm{\bm{Y}}%

\global\long\def\Zbm{\bm{Z}}%

\global\long\def\lambm{\bm{\lambda}}%

\global\long\def\alphabm{\bm{\alpha}}%

\global\long\def\albm{\bm{\alpha}}%

\global\long\def\taubm{\bm{\tau}}%

\global\long\def\mubm{\bm{\mu}}%

\global\long\def\yrm{\mathrm{y}}%

\global\long\def\rone{\text{\ensuremath{\brbra{\textrm{I}}}}}%

\global\long\def\rtwo{\brbra{\text{II}}}%

\global\long\def\rthree{\brbra{\text{\textrm{III}}}}%

\global\long\def\rfour{\brbra{\text{\textrm{IV}}}}%

\global\long\def\rfive{\brbra{\text{V}}}%

\global\long\def\rsix{\brbra{\text{\textrm{VI}}}}%

\global\long\def\rseven{\brbra{\text{VI\textrm{I}}}}%

\global\long\def\reight{\brbra{\text{VI\textrm{I}I}}}%

\global\long\def\aleq{\overset{(a)}{\leq}}%

\global\long\def\bleq{\overset{(b)}{\leq}}%

\global\long\def\cleq{\overset{(c)}{\leq}}%

\global\long\def\dleq{\overset{(d)}{\leq}}%

\global\long\def\ageq{\overset{(a)}{\geq}}%

\global\long\def\bgeq{\overset{(b)}{\geq}}%

\global\long\def\cgeq{\overset{(c)}{\geq}}%

\global\long\def\beq{\overset{(b)}{=}}%

\global\long\def\ceq{\overset{(c)}{=}}%

\global\long\def\deq{\overset{(d)}{=}}%

\global\long\def\vbfp{\vbf_{\text{p}}}%

\global\long\def\vbfd{\vbf_{\text{d}}}%

\global\long\def\tp{t_{\text{p}}}%

\global\long\def\td{t_{\text{d}}}%

\global\long\def\tab{\qquad}%

\global\long\def\btab{\hspace{1.2cm}}%

\global\long\def\bbtab{\hspace{1.8cm}}%

\global\long\def\Lin{\operatorname{Lin}}%

\global\long\def\Span{\operatorname{Span}}%

\global\long\def\supp{\operatorname{supp}}%

\global\long\def\holder{\text{Hölder}}%
\global\long\def\apex{\text{APEX}}%
\global\long\def\pws{\text{PWS}}%
\global\long\def\rapex{\text{r}\apex}%
\global\long\def\flag{\text{\textbf{Flag}}}%
\global\long\def\false{\text{\textbf{\text{False}}}}%
\global\long\def\true{\text{\textbf{True}}}%
\global\long\def\Wcer{\text{W-certificate}}%

\global\long\def\Adet{\mbb A_{\text{det}}}%
\global\long\def\Azr{\mbb A_{\text{zr}}}%
\global\long\def\calZA{\mcal Z_{\mcal A}}%
\global\long\def\onestep{\text{One-Step}}%
\global\long\def\quarflag{\text{\textbf{Cert-Flag}}}%
\global\long\def\wrapex{\texttt{wrAPEX}}%
\global\long\def\maxquad{\texttt{MAXQUAD}}%

\newcommand{\jim}[1]{\textcolor{red}{\textbf{#1}}}
\newcommand{\lzw}[1]{\textcolor{blue}{\textbf{#1}}}

\global\long\def\holder{\text{Hölder}}%
\global\long\def\apps{\texttt{APPS}}%
\global\long\def\pws{\text{PWS}}%
\global\long\def\rapps{\text{r}\apps}%
\global\long\def\flag{\text{\textbf{Flag}}}%
\global\long\def\false{\text{\textbf{\text{False}}}}%
\global\long\def\true{\text{\textbf{True}}}%
\global\long\def\Wcer{\text{W-certificate}}%
\global\long\def\douflag{\text{\textbf{Dou-Flag}}}%

\global\long\def\Adet{\mbb A_{\text{det}}}%
\global\long\def\Azr{\mbb A_{\text{zr}}}%
\global\long\def\calZA{\mcal Z_{\mcal A}}%
\global\long\def\onestep{\text{One-step}}%
\global\long\def\halfflag{\text{\textbf{Half-Flag}}}%
\global\long\def\paramflag{\text{\textbf{Param-Flag}}}%
\global\long\def\none{\textbf{None}}%
\global\long\def\PWcer{\text{Penalty W-certificate}}%
\global\long\def\PWcerpair{\text{Paired-cut Penalty W-certificate}}%
\global\long\def\onestepplus{\text{One-Step}^{+}}%
\global\long\def\apexpplus{\texttt{APEX}^{+}}%
\global\long\def\xxx{\red{xxxx}}%
\global\long\def\adjustPara{\texttt{ParamAdjust}}%
\global\long\def\holder{\text{Hölder}}%
\global\long\def\apex{\text{APEX}}%
\global\long\def\pws{\text{PWS}}%
\global\long\def\rapex{\text{r}\apex}%
\global\long\def\flag{\text{\textbf{Flag}}}%
\global\long\def\Wcer{\text{W-certificate}}%
\global\long\def\halfflag{\text{\textbf{Half-Flag}}}%
\global\long\def\lowerflag{\text{\textbf{Lower-Flag}}}%
\global\long\def\Adet{\mbb A_{\text{det}}}%
\global\long\def\Azr{\mbb A_{\text{zr}}}%
\global\long\def\calZA{\mcal Z_{\mcal A}}%
\global\long\def\onestep{\text{One-Step}}%
\global\long\def\quadflag{\text{\textbf{Quad-Flag}}}%
\global\long\def\wrapex{\text{wrAPEX}}%
\global\long\def\xxxx{\red{xxxx}}%
\global\long\def\rapexW{\text{rAPEX-W}}%
\global\long\def\rapexC{\text{rAPEX-C}}%
\global\long\def\convexCert{\text{6.2}}%
\global\long\def\papex{\text{Penalty APEX}}%
\global\long\def\constrNum{{m}}
\global\long\def\activeconstrNum{\abs{\mcal A}}
\global\long\def\activeconstrSet{{\mcal A}}%
\global\long\def\guessTime{\mathcal{T}}

\global\long\def\bundleSize{B}%
\global\long\def\Cert{\text{Cert}}%
\global\long\def\pawg{\mathcal{AWG}^+}
\global\long\def\pagr{\mathcal{AGR}^+}
\global\long\def\pawgl{\mathcal{AWGL}^+}
\global\long\def\pagrl{\mathcal{AGRL}^+}
\global\long\def\activeConstrNum{m^+} 

\title{Global Linear Convergence of the Proximal Bundle Method under Unknown Piecewise Smoothness and Quadratic Growth}

\author{Zhenwei Lin\thanks{lin2193@purdue.edu, School of Industrial Engineering, Purdue University} \qquad\qquad\quad Zhe Zhang \thanks{zhan5111@purdue.edu, School of Industrial Engineering, Purdue University}  }

\maketitle

\begin{abstract}
We study why the proximal bundle method (PBM) can perform better in practice
when it retains more cutting planes.
We consider convex objectives with quadratic growth and an \textit{unknown}
piecewise-smooth structure.
Our key observation is that retaining sufficiently many cutting planes
allows PBM to exploit the objective's piecewise-smooth structure and
behave as if it were optimizing a smooth function.
We provide a theoretical explanation for the observed linear
convergence of PBM on piecewise-smooth objectives when it retains sufficiently
many cutting planes.
\end{abstract}

\section{Introduction\label{sec:introduction}}
We study the proximal bundle method (PBM) for minimizing a convex function $f$ over
$X$:
\begin{equation}
f^* = \min_{x\in X} f(x),
\label{eq:problem}
\end{equation}
where $X\subseteq\mbb R^n$ is a nonempty closed convex set and
$f:X\to\mbb R$ is lower semicontinuous and attains its minimum. We impose two structural assumptions on $f$.  
First,
$f$ is piecewise smooth ($\pws$) function (see
Definition~\ref{def:pws}).
This structure arises in applications such as
sparse regression, quadratic model predictive control, and two-stage
stochastic linear programming~\cite{tibshirani1996regression,
kouvaritakis2016model,dantzig1955linear}.
In practice, however, the smooth pieces may be difficult to characterize
or unknown a priori.
Second, $f$ satisfies the
quadratic-growth (QG) condition in Definition~\ref{def:QG}.

Originating in the 1970s~\cite{lemarechal1975extension,wolfe1975method,
mifflin1977algorithm}, PBM is a classical approach to
nonsmooth convex optimization, with applications to multicommodity network
flows~\cite{frangioni1999bundle}, Lagrangian duals of nonconvex
problems~\cite{feltenmark2000dual}, stochastic power
management~\cite{bacaud2001bundle}, among other
areas~\cite{deoliveira2011inexact,ning2020transformation,
kim2022scalable,borndorfer2024electric}.
PBM stabilizes Kelley's cutting-plane model~\cite{kelley1960cutting} with a
quadratic proximal term.  At iteration $t$, PBM computes
\[
 y^t=\argmin_{x\in X}
 \left\{
  \psi^t(x)
 +\frac{\rho}{2}\norm{x-x^t}^2
 \right\},
\]
where $x^t$ is the current proximal center, $\rho>0$ is the proximal
parameter, and $\psi^t$ is a convex piecewise-affine lower model of $f$,
defined as the pointwise maximum of the retained cutting planes.
Two classical variants of PBM differ in how they manage these cutting planes.
Full-memory PBM retains all previously generated cutting planes.
Limited-memory PBM (LM-PBM) uses a bundle-size parameter $B$ to control
the number of retained planes through cut selection and
aggregation.

Existing worst-case bounds do not necessarily improve when more cutting
planes are retained.  In particular, full-memory PBM and a limited-memory
two-cut variant retaining one aggregate cut and the most recent cut can
attain the same worst-case iteration-complexity order~\cite{du2017rate,
liang2024unified}.  
For general Lipschitz nonsmooth convex problems, suitably
tuned or adaptive variants attain the classical $O(1/\vep^2)$ subgradient
order~\cite{liang2024unified,diaz2023optimal}.
In practice, however, the numerical behavior can be
markedly different.  
% The bundle-size parameter $B$ in Algorithm~\ref{alg:lm-pbm} controls the
% number of cutting planes retained in the proximal bundle model, which
% contains at most $B+3$ planes.
Figure~\ref{fig:intro-lm-pbm-memory} shows that, on a 
$\pws$ instance (MAXQUAD), the optimality gap for LM-PBM (Algorithm~\ref{alg:lm-pbm}) with $B=50$ exhibits
approximately geometric decay, whereas smaller values
of $B$ make much slower progress.  
Related numerical studies also report that
multi-cuts model can require fewer iterations than more aggressively compressed
models on some nonsmooth problems~\cite{
fersztand2024frankwolfe,guigues2024adaptive}.
It naturally motivates the following question:

\[\ovalbox{\begin{minipage}{0.8\columnwidth - 2\fboxsep - 0.8pt}%
            \centering \it
            Why can retaining more cutting planes substantially improve the practical
performance of PBM, while existing complexity bounds fail to
capture this benefit?
\end{minipage}}
\]

\begin{figure}[t]
  \centering
  \includegraphics[width=0.62\linewidth]{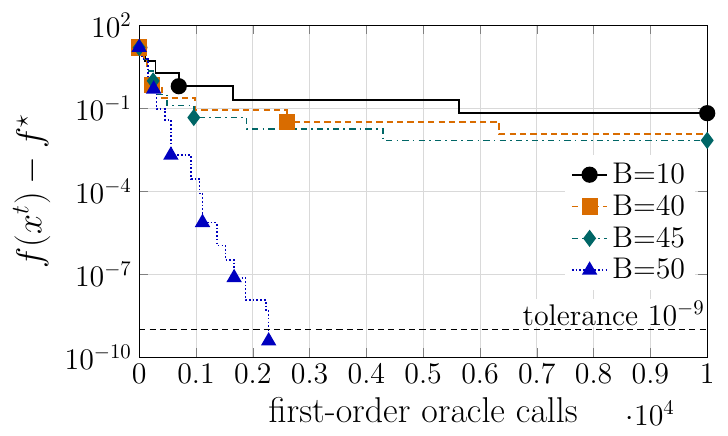}
  \caption[Effect of the bundle size on a random MAXQUAD instance.]{
  Effect of the bundle-size parameter $B$ in Algorithm~\ref{alg:lm-pbm} on a random MAXQUAD instance.
  The objective is
  $f(x)=\max_{i\in[k]}\{\frac12 x^\top A_i x+b_i^\top x+c_i\}$ for
  $x\in\mathbb{R}^n$, with $k=50$ and $n=1000$.
  Each matrix $A_i$ has minimum eigenvalue $\mu^*=1$ and maximum eigenvalue $L=5$.
  Its eigenvalues form an increasing arithmetic progression, and its
  orthonormal eigenvectors are randomly generated.
  Algorithm~\ref{alg:lm-pbm} uses the proximal parameter $\rho=1$.
  \label{fig:intro-lm-pbm-memory}}
\end{figure}

In this work, we aim to narrow this gap by analyzing PBM
for $\pws$ problems with QG condition.

Previous work showed that bundle-level methods\footnote{Bundle-level methods
are another class of bundle algorithms.  Instead of including
the cutting-plane model in the objective, they impose the model through a
level-set constraint and compute a projection over the resulting
set.} can adapt to unknown $\pws$ and attain a global
linear convergence rate under QG~\cite{zhang2025linearly,partOne}.  In this
paper, we prove that PBM has similar convergence properties.
Under the $\pws$ assumption, PBM can exploit the $\pws$ structure by retaining sufficiently many cutting planes.
We develop a new
progress analysis that translates this insight into global linear
convergence while accounting for serious and null steps and cutting-plane
management.

Our contributions are organized as follows.
\begin{enumerate}
\item \textbf{Global linear convergence of Proximal Bundle Method.}
In Sections~\ref{sec:proximal_bundle_algorithm}
and~\ref{sec:limited_memory_algorithm}, we analyze the full-memory PBM and the
limited-memory PBM (LM-PBM), respectively.  Both analyses consider objectives
satisfying $(k,L)$-$\pws$ (Definition~\ref{def:pws})
and the $\mu^*$-QG condition (Definition~\ref{def:QG}).
When $\mu^*$ is known, full-memory PBM achieves a first-order oracle complexity
of $O\bigl(k(1+L/\mu^*)\log(1/\vep)\bigr)$.
If, in addition, the LM-PBM bundle size $B$ is at least the total number $k$
of smooth pieces, LM-PBM achieves a first-order oracle complexity of
$O\bigl(B(1+L/\mu^*)\log(1/\vep)\bigr)$.
These theoretical results explain the linear convergence observed in
Figure~\ref{fig:intro-lm-pbm-memory} for LM-PBM with $B=k$. 

\item \textbf{A verifiable certificate for Proximal Bundle Method.}
In Section~\ref{sec:rbcer}, we introduce the regularized bundle certificate
(RBCer) for any finite-valued convex minorant.  The associated quantity
$\Delta_\rho(\bar x;\psi)$ is obtained by solving a proximal bundle subproblem.
RBCer yields an upper bound on the optimality gap under the QG condition.
  This dependence enables the conditional lower-bound test in the
guess-and-check scheme used by the almost parameter-free algorithm described
next.

\item \textbf{An anytime, almost parameter-free Proximal Bundle Method.}
In Section~\ref{sec:restarted-lm-pbm}, we introduce restarted LM-PBM
(rLM-PBM), which uses a guess-and-check scheme to adapt its proximal parameter
to the unknown QG modulus.  The method is anytime: it does not require the
target accuracy $\vep$ as input.  The qualifier \textit{almost} refers solely to
the condition $B\geq k$ required by the stated worst-case PWS complexity
bound, where $k$ denotes the total number of smooth pieces.  
rLM-PBM requires no prior knowledge of $\mu^*$.  With the initialization
described in Section~\ref{subsec:rlm-guarantees}, it achieves global linear
convergence in first-order oracle calls when $B\geq k$.
\end{enumerate}

\subsection{Related Work}

Bundle methods extend Kelley's cutting-plane scheme~\cite{kelley1960cutting}
by accumulating first-order information in a piecewise-affine lower model.
We organize the literature most relevant to this paper by the stabilization
mechanism: proximal regularization and level constraints.

\paragraph{Proximal Bundle Method.}
Within the proximal branch, early work developed adaptive proximity control,
exact-penalty formulations for constrained problems, approximate and
decomposed models, Bregman stabilization, and practical efficiency
\cite{kiwiel1990proximity,kiwiel1991exactpenalty,
kiwiel1995approximations,kiwiel1999bregman,kiwiel2000efficiency}.
General treatments of the resulting framework and its model-management rules
can be found in~\cite{bagirov2014introduction,oliveira2014bundle,
frangioni2020standard}.

Of particular relevance to this paper is model management.  Aggregate cutting
planes and generalized updates provide one-cut, two-cuts, and multi-cuts
alternatives to retaining every cut
\cite{kiwiel1983aggregate,frangioni2002generalized,liang2024unified}, while
limited-memory schemes reduce storage for large-scale nonsmooth problems
\cite{haarala2007limited}.  Quantitative analyses range from early efficiency
estimates to increasingly sharp iteration bounds under different proximal
parameters, smoothness classes, and growth conditions
\cite{kiwiel2000efficiency,du2017rate,liang2021proximal,diaz2023optimal,
guigues2024adaptive,liao2026accelerated,fersztand2025acceleration}.

Other extensions address inexact oracles and alternative stabilization
\cite{hintermuller2001approximate,kiwiel2006approximate,de2014convex,
bonnans1995variable,lemarechal1997variable,oliveira2016doubly}, as well as
composite, stochastic, and asynchronous computation
\cite{sagastizabal2013composite,liang2024unified,liang2023proximal,
liang2024singlecut,fischer2025asynchronous}.  PBM has also been extended to
nonconvex, weakly convex, constrained, and difference-of-convex optimization
\cite{kiwiel1996restricted,hare2010redistributed,yang2014constrained,
hare2016inexact,joki2017dc,lv2018constrained,de2019proximal,
atenas2023unified,liang2023proximal,pang2023nonconvex}.
Recent primal--dual interpretations connect bundle subproblems with
conditional-gradient methods~\cite{fersztand2024frankwolfe,
liang2024primaldual}, and aggregated models support computable accuracy
certificates~\cite{burns2026accuracy}.  

\paragraph{Bundle Level Method.}
Bundle level methods stabilize the cutting-plane model by constructing a
model-based level set and computing a projection or prox-center within that
set.  Introduced by Lemaréchal, Nemirovski, and
Nesterov~\cite{lemarechal1995new}, these methods attain the optimal
$O(1/\vep^2)$ oracle complexity for Lipschitz-continuous nonsmooth convex
optimization.  Subsequent developments cover constrained convex programs,
saddle-point problems, and variational inequalities~\cite{kiwiel1995proximal};
non-Euclidean restricted-memory schemes~\cite{ben2005non}; inexact
oracles~\cite{de2014level}; uniformly optimal smooth and nonsmooth
variants~\cite{lan2015bundle,jiang2026optimal}; and function-constrained
optimization~\cite{deng2024uniformly,partTwo}.  Most closely related to our work,
recent bundle level methods exploit a finite-piece matching
argument to obtain global linear convergence for unknown $\pws$
objectives under quadratic growth~\cite{zhang2025linearly,partOne}. 

\subsection{Outline}

The remainder of the paper is organized as follows.  We conclude this section
by introducing notation.  Section~\ref{sec:problem_assumptions} reviews the necessary preliminaries.
Section~\ref{sec:rbcer} introduces the regularized bundle certificate (RBCer)
and establishes its optimality-gap guarantee.  Sections~\ref{sec:proximal_bundle_algorithm}
and~\ref{sec:limited_memory_algorithm} analyze full-memory PBM and LM-PBM,
respectively.  Section~\ref{sec:restarted-lm-pbm} presents restarted LM-PBM for
an unknown QG modulus and establishes its adaptive complexity guarantees.

\subsection{Notation}

Unless stated otherwise, $\norm{\cdot}$ is the Euclidean norm and
$\inner{\cdot}{\cdot}$ is the Euclidean inner product.  For a nonempty set
$S\subseteq\mbb R^n$, let
$\dist(x,S):=\inf_{y\in S}\norm{x-y}$.  For $x\in X$, the convex normal
cone to $X$ at $x$ is
$N_X(x):=\left\{v\in\mbb R^n:
 \inner{v}{z-x}\leq0\quad\text{for every }z\in X\right\}.$
For $x\in X$, we define the subdifferential of $f$ at $x$ by
$
 \partial f(x):=\left\{v\in\mbb R^n:
 f(z)\geq f(x)+\inner{v}{z-x}\quad\forall z\in X\right\}.
$
For $a\in\mbb R$, write
$[a]_+:=\max\{a,0\}$ and
$\lceil a\rceil_+:=\max\{0,\lceil a\rceil\}$; for $a\geq0$, write
$\log_+(a):=\log(\max\{1,a\})$.  For $u\in X$, the linearization of $f$ at
$u$ is
\begin{equation}\label{eq:subgrad-cut}
 \ell_f(x;u):=f(u)+\inner{g(u)}{x-u}\leq f(x),
 \qquad \forall x\in X,
\end{equation}
where $g(u)\in\partial f(u)$ is returned by the first-order oracle.

\section{Preliminaries}\label{sec:problem_assumptions}
This section states the definitions of $\pws$ and quadratic
growth used throughout the paper.
Following~\cite{zhang2025linearly,partOne}, we use the following definition of
$\pws$ and make the corresponding first-order oracle assumption.
\begin{defn}[$(k,L)$-piecewise smoothness, $(k,L)$-\pws]\label{def:pws}
Let $k\in\mathbb{N}_{+}$ denote the number of pieces, and let $L>0$ denote
the piecewise-smoothness constant.  A function $f:X\to\mbb R$ is
$(k,L)$-piecewise smooth if there exist $k$ subsets
$\bcbra{X_i}_{i=1}^{k}$ of $X$ that cover $X$ such that each restriction
$f_{\mid X_i}$ is $L$-smooth.
\end{defn}
Throughout the paper, we assume that $f$ is $(k,L)$-$\pws$ and that the
first-order oracle satisfies Assumption~\ref{ass:first-order-oracle}.
\begin{assumption}
  \label{ass:first-order-oracle}
  At each $\bar x\in X$, the first-order oracle returns a subgradient
  $g(\bar x)$ satisfying the piecewise-smoothness condition
  \begin{equation}\label{eq:pws}
 f(x)-\ell_f(x;\bar{x})
 \leq \frac{L}{2}\norm{x-\bar{x}}^{2},
 \qquad \forall i\in\bsbra{k},\quad \forall x,\bar{x}\in X_i.
\end{equation}
\end{assumption}
We make three remarks about Definition~\ref{def:pws} and
Assumption~\ref{ass:first-order-oracle}.

First, the definition includes finite maxima of smooth convex functions.
Suppose
$
 f(x)=\max_{i\in\bsbra{k}} f_i(x),
$
where each $f_i:X\to\mbb R$ is convex and $L$-smooth.
Assign every
$x\in X$ to one active component, using a fixed rule to break ties, and let
$X_i$ contain the points assigned to component $i$.  These sets cover $X$.
For $\bar x\in X_i$, let the oracle return
$g(\bar x)=\nabla f_i(\bar x)$.  Whenever $x,\bar x\in X_i$, the same
component is active at both points.  Its $L$-smoothness therefore
gives~\eqref{eq:pws}, while convexity ensures that
$g(\bar x)\in\partial f(\bar x)$.

Second, Assumption~\ref{ass:first-order-oracle} is stronger than access to an
arbitrary convex subgradient oracle.  At an interior point of $X$ where $f$ is
differentiable in a neighborhood, the oracle necessarily returns the gradient.
 At a
nondifferentiable boundary point $\bar x$,  the oracle must select a
subgradient satisfying~\eqref{eq:pws} on every piece containing $\bar x$.

Third, $(k,L)$-$\pws$ is a structural assumption and does not require prior
knowledge of the pieces.  Their locations, shapes, and number $k$, as well
as the smoothness constant $L$, may all be unknown.

We also impose quadratic growth, which relates the objective gap to the
distance from the solution set.
\begin{defn}[Quadratic growth, QG]\label{def:QG}
Let $X^*:=\argmin_{x\in X}f(x)$ be nonempty.  Define the maximal QG modulus
of $f$ by
\begin{equation}\label{eq:maximal-qg}
 \mu^*:=\sup\left\{\nu>0:
 f(x)-f^*\geq\frac{\nu}{2}\dist^2(x,X^*)
 \quad\text{for every }x\in X\right\}.
\end{equation}
We say that $f$ satisfies quadratic growth if $\mu^*>0$, and throughout this
paper we assume $0<\mu^*<\infty$.
\end{defn}

\section{Regularized bundle certificate}
\label{sec:rbcer}

We formulate a certificate by regularizing a convex minorant model
around the point of interest.  The underlying model-based proximal gap
also appears in the null-step analysis of~\cite[Section~5.3]{diaz2023optimal}.
Here, we define the certificate for a general minorant and use it to derive an
upper bound on the optimality gap under the QG condition.

\begin{defn}[Regularized bundle certificate (RBCer)]
\label{def:rbcer}
Let $\psi:X\to\mbb R$ be a finite-valued, lower-semicontinuous convex function
satisfying $\psi(x)\leq f(x)$ for every $x\in X$, and let $\rho>0$.
For $\bar x\in X$ and $\eta\geq0$, we call $(\bar x,\psi)$ a
$(\rho,\eta)$-regularized bundle certificate, or a $(\rho,\eta)$-RBCer, if
$\Delta_\rho(\bar x;\psi)\leq\eta$, where
\begin{equation}\label{eq:rbcer}
\begin{aligned}
 \Delta_\rho(\bar x;\psi)
 &:=\max_{x\in X}\left\{
 f(\bar x)-\psi(x)-\frac{\rho}{2}\norm{x-\bar x}^2\right\}.
\end{aligned}
\end{equation}
\end{defn}

We make three remarks about Definition~\ref{def:rbcer}.

First, $\Delta_\rho(\bar x;\psi)$ is well defined and nonnegative.  Indeed, a finite-valued
convex function on $X$ has an affine lower bound.  Hence
$\psi(x)+\rho\norm{x-\bar x}^2/2$ is lower semicontinuous, coercive, and
$\rho$-strongly convex.  It therefore has a unique minimizer over the nonempty
closed convex set $X$.
Equivalently, the maximum in~\eqref{eq:rbcer} is attained at a unique point.
Evaluating its objective at $x=\bar x$ and using
$\psi(\bar x)\leq f(\bar x)$ give $\Delta_\rho(\bar x;\psi)\geq0$.

Second, RBCer is a quadratically regularized analogue of the normalized
Wolfe certificate~\cite{zhang2025linearly,partOne}.  The normalized Wolfe
certificate measures the largest model decrease over a ball and normalizes
this decrease by the radius.  RBCer instead subtracts the quadratic penalty
$\rho\norm{x-\bar x}^2/2$, so points farther from $\bar x$ remain feasible
but incur a larger penalty.  Moreover, Definition~\ref{def:rbcer} permits any
finite convex global minorant and does not require center exactness or a
prescribed ball containing all bundle points.

Third, RBCer is also related to the primal-dual certificate of Burns and
Liang~\cite{burns2026accuracy}.  Both compare a primal objective value with the
minimum of a convex model.  In the setting analyzed by Burns and Liang, they regularize
the original composite objective and use a convexly aggregated cutting-plane
model; controlling the resulting regularization bias requires their
regularizer to be bounded on the domain.  RBCer leaves the original objective
unchanged and places the centered quadratic term only in the certificate
subproblem.  Thus, $\Delta_\rho(\bar x;\psi)$ remains well defined even when $X$ is
unbounded, and Proposition~\ref{prop:rbcer-objective-gap} bounds
$f(\bar x)-f^*$ in terms of $\Delta_\rho(\bar x;\psi)$ under quadratic growth.

\begin{proposition}
\label{prop:rbcer-objective-gap}
Suppose $f$ satisfies QG condition with $\mu^*>0$, and
let $0<\widehat\mu\leq\mu^*$.  Let $\rho>0$ and $\eta\geq0$.  If
$(\bar x,\psi)$ is a
$(\rho,\eta)$-RBCer, then
\begin{equation}\label{eq:certificate-bound}
 f(\bar x)-f^*\leq
 \max\left\{2,\frac{4\rho}{\widehat\mu}\right\}\eta.
\end{equation}
\end{proposition}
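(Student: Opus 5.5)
Since $\psi\le f$, plugging $x=x^*$ into the definition of $\Delta_\rho(\bar x;\psi)$ gives an inequality between $f(\bar x)-f^*$ and the squared distance from $\bar x$ to the solution set. Quadratic growth then lets us absorb that distance term. Concretely, let $x^*:=\proj_{X^*}(\bar x)$, which exists because $X^*$ is nonempty, closed and convex. Write $D:=\dist(\bar x,X^*)=\norm{\bar x-x^*}$ and $G:=f(\bar x)-f^*$. Since $\psi(x^*)\le f(x^*)=f^*$, evaluating the maximand of~\eqref{eq:rbcer} at $x=x^*$ yields
\[
\eta\ \ge\ \Delta_\rho(\bar x;\psi)\ \ge\ f(\bar x)-\psi(x^*)-\tfrac{\rho}{2}D^2\ \ge\ G-\tfrac{\rho}{2}D^2 .
\]
On the other hand, Definition~\ref{def:QG} with $\widehat\mu\le\mu^*$ gives $G\ge\frac{\widehat\mu}{2}D^2$, that is, $D^2\le 2G/\widehat\mu$. (Every $\nu<\mu^*$ is admissible, and letting $\nu\uparrow\mu^*$ shows $\mu^*$ itself satisfies the growth inequality, hence so does $\widehat\mu$.) Substituting gives $\eta\ge G\,(1-\rho/\widehat\mu)$.

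If $\rho\le\widehat\mu/2$, this already gives $G\le 2\eta$. The obstacle is the regime $\rho>\widehat\mu/2$, where the bound degenerates. Here I would not compare with $x^*$ directly. Instead I would test the maximand at an intermediate point $x_\theta:=\bar x+\theta(x^*-\bar x)\in X$ with $\theta\in(0,1]$. By convexity, $\psi(x_\theta)\le f(x_\theta)\le(1-\theta)f(\bar x)+\theta f^*$. Hence
\[
\eta\ \ge\ f(\bar x)-\psi(x_\theta)-\tfrac{\rho}{2}\theta^2D^2\ \ge\ \theta G-\tfrac{\rho}{2}\theta^2 D^2\ \ge\ \theta G-\tfrac{\rho}{\widehat\mu}\theta^2 G .
\]
Choosing $\theta=\widehat\mu/(2\rho)\in(0,1]$ maximizes the right-hand side and gives $\eta\ge\frac{\widehat\mu}{4\rho}G$, i.e.\ $G\le\frac{4\rho}{\widehat\mu}\eta$.

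In fact the $\theta$-argument covers both cases at once. Take $\theta=\min\{1,\widehat\mu/(2\rho)\}$. When $\theta=1$ we have $\rho\le\widehat\mu/2$, and the bound is $\eta\ge G(1-\rho/\widehat\mu)\ge G/2$. Combining the two cases yields~\eqref{eq:certificate-bound}. The only points needing care are that $x_\theta\in X$, which follows from convexity of $X$, and the limiting argument justifying QG at modulus $\widehat\mu$.
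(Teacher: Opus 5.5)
Your proposal is correct and follows essentially the same route as the paper. Both arguments test the certificate at the interpolated point $(1-\lambda)\bar x+\lambda\bar x^*$ toward the nearest minimizer, combine convexity of $f$ with QG at modulus $\widehat\mu$ to obtain $\Delta_\rho(\bar x;\psi)\ge(\lambda-\rho\lambda^2/\widehat\mu)(f(\bar x)-f^*)$, and then choose $\lambda=\min\{1,\widehat\mu/(2\rho)\}$; your limiting argument that the QG inequality holds at the supremum $\mu^*$ (hence at $\widehat\mu$) fills in a detail the paper states without justification.
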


\begin{proof}
Since $\psi(x)\leq f(x)$ for every $x\in X$,
\begin{equation}\label{eq:rbcer-exact-comparison}
 \Delta_\rho(\bar x;\psi)
 \geq f(\bar x)-\min_{x\in X}
 \left\{f(x)+\frac{\rho}{2}\norm{x-\bar x}^2\right\}.
\end{equation}
The set $X^*$ is nonempty and closed, so in finite dimensions there exists
$\bar x^*\in X^*$ such that
$\norm{\bar x-\bar x^*}=\dist(\bar x,X^*)$.  For
$\lambda\in[0,1]$, let
$z_\lambda=(1-\lambda)\bar x+\lambda\bar x^*\in X$.  Since
$0<\widehat\mu\leq\mu^*$, the QG inequality holds with
$\widehat\mu$.  Convexity and quadratic growth therefore give
\[
 f(z_\lambda)\leq f(\bar x)-\lambda\bigl(f(\bar x)-f^*\bigr),
 \qquad
 \norm{\bar x-\bar x^*}^2
 \leq\frac{2}{\widehat\mu}\bigl(f(\bar x)-f^*\bigr).
\]
Evaluating the regularized objective at $z_\lambda$ in
\eqref{eq:rbcer-exact-comparison} gives
\begin{equation}\label{eq:lambda-bound}
 \Delta_\rho(\bar x;\psi)
 \geq\left(\lambda-\frac{\rho}{\widehat\mu}\lambda^2\right)
 \bigl(f(\bar x)-f^*\bigr).
\end{equation}
If $\rho\geq\widehat\mu/2$, choosing
$\lambda=\widehat\mu/(2\rho)$ in
\eqref{eq:lambda-bound} gives
$f(\bar x)-f^*\leq(4\rho/\widehat\mu)\Delta_\rho(\bar x;\psi)$.  If
$0<\rho<\widehat\mu/2$, choosing $\lambda=1$ gives
$f(\bar x)-f^*\leq2\Delta_\rho(\bar x;\psi)$.  Hence
\[
 f(\bar x)-f^*\leq
 \max\left\{2,\frac{4\rho}{\widehat\mu}\right\}\Delta_\rho(\bar x;\psi)
 \leq\max\left\{2,\frac{4\rho}{\widehat\mu}\right\}\eta,
\]
where the last inequality follows from the RBCer condition.
\end{proof}

Proposition~\ref{prop:rbcer-objective-gap} also provides the conditional lower
bound
\begin{equation}\label{eq:conditional-lower-bound}
 f(\bar x)-\max\left\{2,\frac{4\rho}{\widehat\mu}\right\}\eta
 \leq f^*,
 \qquad 0<\widehat\mu\leq\mu^*.
\end{equation}
Therefore, if an evaluated feasible point has an objective value below the
left-hand side of~\eqref{eq:conditional-lower-bound}, then
$\widehat\mu>\mu^*$.  Section~\ref{sec:restarted-lm-pbm} uses this test for
proximal-parameter tuning without knowing the QG modulus.

\section{Full-memory PBM}
\label{sec:proximal_bundle_algorithm}

We begin with the classical full-memory PBM, which retains every generated
cutting plane.  We state the algorithm, explain its contraction mechanism, and
then present the convergence guarantees and proofs.

\subsection{The algorithm and key idea}\label{subsec:algorithm}

We describe the full-memory PBM in
Algorithm~\ref{alg:complete-pb}.  It maintains a current proximal center $x^t$ and a
piecewise-affine global minorant $\psi^t$ of $f$.  In
line~\ref{line:fm-trial}, the method minimizes the proximal bundle model to
obtain a trial point $y^t$.  The test in line~\ref{line:fm-test} compares the
reduction $f(x^t)-f(y^t)$ in the true objective with the reduction
$f(x^t)-\psi^t(y^t)$ predicted by the bundle model.
If the former is at
least a fraction $\beta$ of the latter, line~\ref{line:fm-serious} accepts
$y^t$ as the new center and declares a serious step;
otherwise,
lines~\ref{line:fm-null-branch}--\ref{line:fm-null-update} retain $x^t$ and
declare a null step.  Finally, line~\ref{line:fm-model-update} appends the
cutting plane generated at $y^t$ without discarding any previous plane.
\begin{algorithm}[htp]
\caption{Full-memory PBM (FM-PBM)\label{alg:complete-pb}}
\begin{algorithmic}[1]
\Require $x^0\in X$,
$\rho>0$ and $\beta\in(0,1)$
\State $\psi^0\gets\ell_f(\cdot;x^0)$
\For{$t=0,1,\ldots$}
    \State Compute trial point $y^t\gets\displaystyle\argmin_{x\in X}
    \left\{\psi^t(x)+\frac{\rho}{2}\norm{x-x^t}^2\right\}$
    \label{line:fm-trial}
    \If{$f(x^t)-f(y^t)\geq\beta\bigl(f(x^t)-\psi^t(y^t)\bigr)$}
        \label{line:fm-test}
        \Comment{serious step}
        \State $x^{t+1}\gets y^t$ \label{line:fm-serious}
    \Else \Comment{null step} \label{line:fm-null-branch}
        \State $x^{t+1}\gets x^t$ \label{line:fm-null-update}
    \EndIf
    \State $\psi^{t+1}(x)\gets\max\{\psi^t(x),\ell_f(x;y^t)\}$
    \label{line:fm-model-update}
\EndFor
\end{algorithmic}
\end{algorithm}

For simplicity, we define
\begin{equation}\label{eq:D}
 \Delta_t:=\Delta_\rho(x^t;\psi^t)
 =f(x^t)-\psi^t(y^t)-\frac{\rho}{2}\norm{y^t-x^t}^2.
\end{equation}
For the analysis, let $s$ denote the serious-step counter and let $\tau_s$
be the full iteration index of the $s$-th serious step, for every $s\geq1$
for which that step occurs.  Thus $\tau_1<\tau_2<\cdots$, and
$x^{\tau_s}$ is the proximal center at which the $s$-th serious step is taken.
Whenever the next serious step occurs, $x^{\tau_{s+1}}=y^{\tau_s}$ because
the intervening null steps leave the center unchanged.  The index
$\tau_{s+1}$ advances to the next serious iteration; $\tau_s+1$ advances
the full iteration counter by one.

Before adapting the piecewise-smooth analysis to PBM, we recast the smooth-case
analysis of~\cite{diaz2023optimal} in terms of $\Delta_t$.
The analysis has three main ingredients:
\begin{enumerate}
  \item Each serious step contracts the objective gap by a fixed factor.  For
  successive serious iterations $\tau_s$ and $\tau_{s+1}$,
  \begin{equation}\label{eq:optimality_gap_descent}
    f(x^{\tau_s}) - f(x^{\tau_{s+1}}) \geq \beta \Delta_{\tau_s}\overset{Proposition~\ref{prop:rbcer-objective-gap}}{\Longrightarrow} f(x^{\tau_{s+1}})-f^*\leq \gamma_{\rho}(f(x^{\tau_s})-f^*),
  \end{equation}
  where $\gamma_{\rho}:=1-\beta/\max\{2,4\rho/\mu^*\}\in(0,1)$.
  \item In the globally $L$-smooth case, smoothness controls the length of
  consecutive null steps.  If iterations $t$ and $t+1$ are both null steps, then
  \begin{equation}\label{eq:smooth-null-step-bound}
    (1-\beta)\Delta_{t+1}
    < f(y^{t+1})-\psi^{t+1}(y^{t+1})\leq f(y^{t+1}) - \ell_f(y^{t+1};y^t)
    \leq \frac{L}{2}\norm{y^{t+1}-y^t}^2
    \leq \frac{L}{\rho}\bigl(\Delta_t-\Delta_{t+1}\bigr).
  \end{equation}
  Consequently,
  \begin{equation}\label{eq:smooth-null-contraction}
    \Delta_{t+1}\leq \theta_{\rho}\Delta_t,
  \end{equation}
  where $\theta_{\rho}:=L/(L+(1-\beta)\rho)\in(0,1)$.
  \item The value $\Delta_t$ is a constant-factor surrogate for the
  model-predicted decrease:
  \begin{equation}\label{eq:constant_surrogate}
    \Delta_t\leq f(x^t)-\psi^t(y^t)\leq2\Delta_t.
  \end{equation}
\end{enumerate}

Inequality~\eqref{eq:smooth-null-contraction} shows that, in the globally smooth
case, $\Delta_t$ decreases geometrically along any sequence of consecutive null
steps.  By~\eqref{eq:constant_surrogate}, $\Delta_t$ is also within a
factor of two of the model-predicted decrease.  Consequently, after finitely many null steps the method must take a serious step.  Together with
the serious-step contraction
in~\eqref{eq:optimality_gap_descent}, this yields global linear convergence of
full-memory PBM in the globally smooth case.

The preceding analysis applies to globally $L$-smooth objectives because, for
two consecutive null iterations $t$ and $t+1$, it uses
inequality~\eqref{eq:smooth-null-step-bound}.  Under $(k,L)$-piecewise
smoothness, however, $y^t$ and $y^{t+1}$ may lie in different pieces, so the
smoothness inequality in this chain need not hold.

In the new analysis, we instead compare two possibly nonconsecutive trial points in
a common piece.  Among any $k+1$ null-step trial points
$y^u,\ldots,y^{u+k}$, the pigeonhole principle gives indices
$u\leq l<r\leq u+k$ such that $y^l$ and $y^r$ share a piece.  For this pair,
full memory retains the cutting plane
generated at $y^l$, and the corresponding piecewise-smooth inequality is
\begin{equation*}
 (1-\beta)\Delta_r
 <f(y^r)-\psi^r(y^r)
 \leq f(y^r)-\ell_f(y^r;y^l)
 \leq\frac{L}{2}\norm{y^r-y^l}^2
 \leq\frac{L}{\rho}\bigl(\Delta_l-\Delta_r\bigr).
\end{equation*}
Thus $\Delta_r<\theta_{\rho}\Delta_l$.  Monotonicity of $\Delta_t$ along the remaining null steps
then gives $\Delta_{u+k+1}\leq\theta_{\rho}\Delta_u$; see
Proposition~\ref{prop:full-null-contraction}.  Hence we replace the smooth one-step
contraction~\eqref{eq:smooth-null-contraction} with one contraction every $k+1$
null steps.  The serious-step contraction~\eqref{eq:optimality_gap_descent}
and the surrogate bound~\eqref{eq:constant_surrogate} remain unchanged.  The
resulting oracle bound has the same logarithmic dependence on the target
accuracy as in the globally smooth case, with an additional factor
proportional to $k$.

\subsection{Convergence guarantees}\label{sec:global_complexity}

We are now ready to present the convergence guarantees for
Algorithm~\ref{alg:complete-pb}.
Proposition~\ref{prop:full-null-contraction} establishes that $\Delta_t$ is
nonincreasing along null steps and contracts by a factor $\theta_\rho$ over
every $k+1$ consecutive null steps.  The two subsequent theorems combine
these estimates with the serious-step bounds to give first-order oracle
complexity guarantees for obtaining a $(\rho,\eta)$-RBCer and reaching an
$\vep$-optimal serious center, respectively.  The subsequent corollary
specializes the latter bound to
$\rho\in[\min\{\mu^*,L\},\max\{\mu^*,L\}]$ and $\beta=1/2$,
with $\rho=\mu^*$ and $\rho=L$ as two special cases.

\begin{proposition}
\label{prop:full-null-contraction}
If iteration $t$ of Algorithm~\ref{alg:complete-pb} is null, then
\begin{equation}\label{eq:full-null-monotonicity}
 \Delta_{t+1}\leq \Delta_t.
\end{equation}
Moreover, if iterations $u,u+1,\ldots,u+k$ are all null, then
\begin{equation}\label{eq:full-null-contraction}
  \Delta_{u+k+1}\leq\theta_{\rho}\Delta_u.
\end{equation}
\end{proposition}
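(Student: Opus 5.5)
The plan is to treat $\Delta_t$ as the optimal value of a strongly convex model subproblem and to compare models across null steps. Write $\Phi_t(x):=\psi^t(x)+\frac{\rho}{2}\norm{x-x^t}^2$, so that $y^t=\argmin_X\Phi_t$ and $\Delta_t=f(x^t)-\Phi_t(y^t)$. The basic tool is the $\rho$-strong convexity of $\Phi_t$ together with the optimality of $y^t$:
\[
\Phi_t(x)\geq\Phi_t(y^t)+\frac{\rho}{2}\norm{x-y^t}^2,\qquad\forall x\in X .
\]

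\textbf{Monotonicity.} On a null step at $t$ we have $x^{t+1}=x^t$ and $\psi^{t+1}\geq\psi^t$. Hence $\Phi_{t+1}\geq\Phi_t$ pointwise, and therefore
\[
\Phi_{t+1}(y^{t+1})\geq\Phi_t(y^{t+1})\geq\Phi_t(y^t)+\frac{\rho}{2}\norm{y^{t+1}-y^t}^2 .
\]
Subtracting from $f(x^t)$ gives $\Delta_{t+1}\leq\Delta_t-\frac{\rho}{2}\norm{y^{t+1}-y^t}^2\leq\Delta_t$, which is \eqref{eq:full-null-monotonicity}. Iterating over consecutive null steps $l<r$, where the center stays fixed and $\psi^r\geq\psi^l$, the same argument directly yields
\[
\frac{\rho}{2}\norm{y^r-y^l}^2\leq\Delta_l-\Delta_r .
\]
This is the inequality $\frac{L}{2}\norm{y^r-y^l}^2\leq\frac{L}{\rho}(\Delta_l-\Delta_r)$ used in the sketch.

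\textbf{Contraction.} Let iterations $u,\dots,u+k$ all be null. The $k+1$ points $y^u,\dots,y^{u+k}$ lie in $X$, and the $k$ sets $X_i$ cover $X$. By pigeonhole there exist $u\leq l<r\leq u+k$ with $y^l,y^r$ in a common $X_i$. Because iteration $r$ is null, the test fails, so $f(x^r)-f(y^r)<\beta(f(x^r)-\psi^r(y^r))$. Rearranging,
\[
f(y^r)-\psi^r(y^r)>(1-\beta)\bigl(f(x^r)-\psi^r(y^r)\bigr)\geq(1-\beta)\Delta_r ,
\]
where the last step drops the nonnegative term $\frac{\rho}{2}\norm{y^r-x^r}^2$. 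Since full memory keeps the cut from $y^l$, we have $\psi^r\geq\ell_f(\cdot;y^l)$. Assumption~\ref{ass:first-order-oracle} on the common piece then gives
\[
f(y^r)-\psi^r(y^r)\leq\frac{L}{2}\norm{y^r-y^l}^2\leq\frac{L}{\rho}(\Delta_l-\Delta_r).
\]
Combining the two bounds, $(1-\beta)\rho\Delta_r<L(\Delta_l-\Delta_r)$, so $\Delta_r<\theta_\rho\Delta_l$. (If $\Delta_l=0$, everything is trivially zero.) Finally, iteration $u+k$ is null, so monotonicity applies up to index $u+k+1$ and gives
\[
\Delta_{u+k+1}\leq\Delta_r\leq\theta_\rho\Delta_l\leq\theta_\rho\Delta_u .
\]

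\textbf{Main obstacle.} The delicate step is the multi-step distance bound $\frac{\rho}{2}\norm{y^r-y^l}^2\leq\Delta_l-\Delta_r$ for nonconsecutive $l<r$. A telescoping of one-step bounds would only control a sum of squared increments, not $\norm{y^r-y^l}^2$. The fix is to compare $\Phi_r$ directly with $\Phi_l$, which is valid because the center is unchanged across the null steps and the model only grows. One also needs to confirm the indexing: the cut from $y^l$ is added at step $l$, so it is present in $\psi^{l+1}\subseteq\psi^r$, and $r>l$ is exactly what guarantees this.
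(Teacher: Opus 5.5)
Your proof is correct and follows the paper's argument essentially step for step. It compares $\psi^r\geq\psi^l$ directly with strong convexity at $y^l$ to obtain $\frac{\rho}{2}\norm{y^r-y^l}^2\leq\Delta_l-\Delta_r$, uses pigeonhole on the $k+1$ trial points, chains the null test with the same-piece bound, and closes with monotonicity at both ends; the parenthetical about $\Delta_l=0$ is unnecessary, since the rearrangement never divides by $\Delta_l$.
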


Proposition~\ref{prop:full-null-contraction} provides the key extension of
the smooth-case analysis in~\cite{diaz2023optimal} to piecewise-smooth
objectives.  By comparing possibly nonconsecutive trial points in a common
piece, we obtain~\eqref{eq:full-null-contraction}, which guarantees that
$\Delta_t$ contracts by a factor $\theta_\rho$ over every $k+1$ consecutive
null steps.  This replaces the one-step contraction
\eqref{eq:smooth-null-contraction} used in the preceding smooth-case argument.
Therefore, the following results establish global linear convergence of full-memory PBM under $(k,L)$-$\pws$ and quadratic growth.
\begin{thm}
\label{thm:rbcer-complexity}
Given $0<\eta<\bigl(f(x^0)-f^*\bigr)/\beta$, terminate
Algorithm~\ref{alg:complete-pb} at the first iteration for which
$\Delta_t\leq\eta$.  Define
\begin{equation}\label{eq:eta-counts}
 \widehat S_\eta:=\left\lceil
 \log_{1/\gamma_\rho}\!\left(\frac{f(x^0)-f^*}{\beta\eta}\right)
 \right\rceil,\qquad
 \widehat P_\eta:=\left\lceil
 \log_{1/\theta_\rho}\!\left(
 (3-2\beta)^{\widehat S_\eta}
 \max\left\{1,\frac{\norm{g(x^0)}^2}{2\rho\eta}\right\}
 \right)
 \right\rceil.
\end{equation}
Then Algorithm~\ref{alg:complete-pb} produces a $(\rho,\eta)$-RBCer
after at most
\begin{equation}\label{eq:computable-stop-complexity}
O\brbra{k(\widehat S_\eta+\widehat P_\eta)}
\end{equation}
first-order oracle calls.
\end{thm}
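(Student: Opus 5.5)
We bound separately the number of serious steps and the number of null steps taken before termination. The total oracle count is then their sum, because each iteration makes one oracle call.

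\emph{Serious steps.} Suppose the method has not terminated, so at every serious iteration $\tau_s$ we have $\Delta_{\tau_s}>\eta$. The test in line~\ref{line:fm-test}, combined with \eqref{eq:constant_surrogate}, gives $f(x^{\tau_s})-f(x^{\tau_{s+1}})\geq\beta\Delta_{\tau_s}$. Since $f(x^{\tau_{s+1}})\geq f^*$, this yields $\beta\eta<\beta\Delta_{\tau_s}\leq f(x^{\tau_s})-f^*$. Iterating \eqref{eq:optimality_gap_descent} gives $f(x^{\tau_s})-f^*\leq\gamma_\rho^{s-1}(f(x^0)-f^*)$. Combining the two bounds shows that at most about $\widehat S_\eta$ serious steps can occur before termination.

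\emph{Null steps.} Within the block of null steps that follows a given center, Proposition~\ref{prop:full-null-contraction} gives monotonicity of $\Delta_t$ and a contraction by $\theta_\rho$ every $k+1$ null steps. If the block starts at index $t_0$ with value $\Delta_{t_0}$, it therefore has length at most $(k+1)\lceil\log_{1/\theta_\rho}(\Delta_{t_0}/\eta)\rceil$ before $\Delta\leq\eta$ forces termination.

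To make the total count match $\widehat P_\eta$ rather than $\widehat S_\eta$ times a logarithm, I would bound all blocks jointly through a potential. The key quantity is how much $\Delta$ can increase across a serious step. I claim $\Delta_{\tau_s+1}\leq(3-2\beta)\Delta_{\tau_s}$, or a comparable inequality that survives telescoping.

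\emph{Estimate across a serious step.} Evaluating \eqref{eq:rbcer} at the new center $x^{\tau_s+1}=y^{\tau_s}$ and the new model $\psi^{\tau_s+1}\geq\psi^{\tau_s}$ gives
\[
\Delta_{\tau_s+1}\leq f(y^{\tau_s})-\min_{x\in X}\Bigl\{\psi^{\tau_s}(x)+\frac{\rho}{2}\norm{x-y^{\tau_s}}^2\Bigr\}.
\]
Strong convexity of the prox subproblem at $x^{\tau_s}$ bounds the minimum from below by $\psi^{\tau_s}(y^{\tau_s})+\frac{\rho}{2}\norm{y^{\tau_s}-x^{\tau_s}}^2$ minus a correction term. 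Combining this with $f(y^{\tau_s})\leq f(x^{\tau_s})$ and with $f(x^{\tau_s})-\psi^{\tau_s}(y^{\tau_s})\leq2\Delta_{\tau_s}$ should give a bound of the form $\Delta_{\tau_s+1}\leq c\,\Delta_{\tau_s}$ with $c=3-2\beta$. Making this constant come out exactly is the main obstacle. If it fails, I would fall back on $f(y^{\tau_s})-f^*$ and use Proposition~\ref{prop:rbcer-objective-gap}.

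For the initial value, $\Delta_0=\max_x\{f(x^0)-\ell_f(x;x^0)-\frac{\rho}{2}\norm{x-x^0}^2\}\leq\norm{g(x^0)}^2/(2\rho)$.

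\emph{Telescoping.} Write $N_j$ for the number of $(k+1)$-blocks of null steps spent at the $j$-th center. Each such block contracts $\Delta$ by $\theta_\rho$, each of the at most $\widehat S_\eta$ serious steps multiplies it by at most $3-2\beta$, and $\Delta$ stays above $\eta$ until termination. Therefore
\[
\eta<\theta_\rho^{\sum_j N_j}(3-2\beta)^{\widehat S_\eta}\Delta_0,
\]
which gives $\sum_j N_j\leq\widehat P_\eta$.

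Each block contributes at most $k+1$ oracle calls. There are also up to $k$ leftover null steps per center and $\widehat S_\eta$ serious calls. The total is therefore $O(k(\widehat S_\eta+\widehat P_\eta))$. The factor $\max\{1,\cdot\}$ in $\widehat P_\eta$ handles the case $\Delta_0\leq\eta$.
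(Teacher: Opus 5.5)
Your overall argument is the same as the paper's. You count serious steps via $\beta\Delta_{\tau_s}\le f(x^{\tau_s})-f^*\le\gamma_\rho^{s-1}(f(x^0)-f^*)$. You then telescope $\Delta$ once, starting from $\Delta_0\le\norm{g(x^0)}^2/(2\rho)$, with a factor $\theta_\rho$ per complete $(k+1)$-block, no increase on leftover null steps, and a factor $3-2\beta$ per serious step. Finally you add at most $k$ leftover null steps per sequence. The gap is in the one step you leave open, $\Delta_{\tau_s+1}\le(3-2\beta)\Delta_{\tau_s}$, which is the paper's Lemma~\ref{lem:serious-contract}. The ingredients you name cannot give that constant. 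With $t=\tau_s$, your strong-convexity route does work: the correction term is $\rho\norm{y^t-x^t}^2$, so $\min_{x\in X}\{\psi^t(x)+\frac{\rho}{2}\norm{x-y^t}^2\}\ge\psi^t(y^t)-\frac{\rho}{2}\norm{y^t-x^t}^2$. The paper derives the same bound from the optimality condition $\psi^t(x)\ge\psi^t(y^t)+\rho\inner{x^t-y^t}{x-y^t}$. With $\psi^{t+1}\ge\psi^t$ this yields $\Delta_{t+1}\le f(y^t)-\psi^t(y^t)+\frac{\rho}{2}\norm{y^t-x^t}^2$. If you now use only $f(y^t)\le f(x^t)$ and $f(x^t)-\psi^t(y^t)\le2\Delta_t$, you get the factor $3$. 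The missing idea is to use the acceptance test itself to control the model error at the accepted point. The test is equivalent to $f(y^t)-\psi^t(y^t)\le(1-\beta)\bigl(f(x^t)-\psi^t(y^t)\bigr)$. Since $f(x^t)-\psi^t(y^t)=\Delta_t+\frac{\rho}{2}\norm{y^t-x^t}^2$ and $\frac{\rho}{2}\norm{y^t-x^t}^2\le\Delta_t$, you obtain $\Delta_{t+1}\le(1-\beta)\Delta_t+(2-\beta)\frac{\rho}{2}\norm{y^t-x^t}^2\le(3-2\beta)\Delta_t$.

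The factor-$3$ version does survive telescoping, but it puts $3^{\widehat S_\eta}$ rather than $(3-2\beta)^{\widehat S_\eta}$ inside the logarithm. You would then get $O(k(\widehat S_\eta+\widehat P_\eta))$ only with a constant of order $\log 3/\log(3-2\beta)$, which degenerates as $\beta\to1$. The paper instead gets absolute constants from $N_0\le(k+1)P+k(S+1)$. Your fallback through $f(y^{\tau_s})-f^*$ and Proposition~\ref{prop:rbcer-objective-gap} would not work. That proposition bounds the objective gap by $\Delta$, not conversely. Moreover, $\Delta_{\tau_s+1}$ depends on how inaccurate $\psi^{\tau_s+1}$ is away from the new center, which the objective gap does not control. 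A minor point: in the serious-step count, use $f(y^{\tau_s})\ge f^*$ rather than $f(x^{\tau_{s+1}})\ge f^*$, since the $(s+1)$-st serious step need not occur.
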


\begin{thm}\label{thm:main}
For $0<\vep<f(x^0)-f^*$, define
\begin{equation}\label{eq:epsilon-counts}
 S_\vep:=\left\lceil
 \log_{1/\gamma_\rho}\!
 \frac{f(x^0)-f^*}{\beta\min\{1/2,\mu^*/(4\rho)\}\vep}
 \right\rceil,\quad
 P_\vep:=\left\lceil
 \log_{1/\theta_\rho}\!\left(
 (3-2\beta)^{S_\vep}
 \max\left\{1,
 \frac{\norm{g(x^0)}^2}{\min\{\rho,\mu^*/2\}\vep}
 \right\}
 \right)
 \right\rceil.
\end{equation}
Algorithm~\ref{alg:complete-pb} reaches a serious center satisfying
$f(x^t)-f^*\leq\vep$ after at most
\begin{equation}\label{eq:full-memory-complexity}
 T_\vep^{\rm FM}=O(k(S_\vep+P_\vep))
\end{equation}
first-order oracle calls.
\end{thm}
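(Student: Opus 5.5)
The plan is to reduce Theorem~\ref{thm:main} to the machinery behind Theorem~\ref{thm:rbcer-complexity}, via Proposition~\ref{prop:rbcer-objective-gap}. The quantity we control is the number of serious steps needed before the center is $\vep$-optimal. The extra work is to bound how many null steps can occur inside each serious cycle.

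\textbf{Serious steps.} By \eqref{eq:optimality_gap_descent}, each serious step contracts $f(x^{\tau_s})-f^*$ by $\gamma_\rho$. Hence $f(x^{\tau_{s}})-f^*\leq\gamma_\rho^{s-1}(f(x^0)-f^*)$. Combined with Proposition~\ref{prop:rbcer-objective-gap} at threshold $\eta=\min\{1/2,\mu^*/(4\rho)\}\vep$, this shows that after $S_\vep$ serious steps the center is $\vep$-optimal. It also shows that as long as $f(x^t)-f^*>\vep$, the current center satisfies $\Delta_t>\min\{1/2,\mu^*/(4\rho)\}\vep$. Indeed, Proposition~\ref{prop:rbcer-objective-gap} gives $\Delta_t\ge (f(x^t)-f^*)/\max\{2,4\rho/\mu^*\}$. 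So no serious cycle before reaching $\vep$-optimality can push $\Delta_t$ below this threshold.

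\textbf{Null steps within a cycle.} Fix a serious cycle with center $x^{\tau_s}$. By Proposition~\ref{prop:full-null-contraction}, every block of $k+1$ consecutive null steps contracts $\Delta$ by $\theta_\rho$. The cycle therefore has at most $(k+1)\lceil\log_{1/\theta_\rho}(\Delta_{\text{start}}/\Delta_{\text{end}})\rceil$ null steps, with $\Delta_{\text{end}}$ bounded below by the threshold above. We must also lower bound the ratio with which each cycle begins.

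The key estimate is $\Delta$ at the start of a cycle: bound $\Delta_{\tau_{s}+1}$ (the first value at the new center) in terms of the previous cycle's value, by a factor like $(3-2\beta)$. I would try the following. At the new center $x^{\tau_{s+1}}=y^{\tau_s}$, use $\psi^{\tau_s+1}\ge\ell_f(\cdot;y^{\tau_s})$ and the surrogate \eqref{eq:constant_surrogate}. Then
\[
\Delta_{\tau_s+1}\le f(y^{\tau_s})-\min_x\{\psi^{\tau_s}(x)+\tfrac{\rho}{2}\norm{x-y^{\tau_s}}^2\},
\]
and by $\rho$-strong convexity of the model subproblem at $x^{\tau_s}$ this is at most $f(y^{\tau_s})-\psi^{\tau_s}(y^{\tau_s})+\Delta$-type terms. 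Bounding $f(y^{\tau_s})\le f(x^{\tau_s})$ and $f(x^{\tau_s})-\psi^{\tau_s}(y^{\tau_s})\le2\Delta_{\tau_s}$ yields something like $(3-2\beta)\Delta_{\tau_s}$ after using the serious test. Initially, $\Delta_0\le\norm{g(x^0)}^2/(2\rho)$ by the explicit one-cut subproblem.

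\textbf{Totals.} Telescoping gives cumulative log-ratios over all cycles of at most
\[
\log_{1/\theta_\rho}\Bigl((3-2\beta)^{S_\vep}\,\Delta_0/\text{threshold}\Bigr)\le P_\vep ,
\]
up to the $\max\{1,\cdot\}$, because each cycle's multiplicative ratio $\Delta_{\text{start}}/\Delta_{\text{end}}$ multiplies into a telescoping product. The ceilings add at most one block per cycle. Hence the total number of null steps is $O(k(S_\vep+P_\vep))$, the serious steps add $S_\vep$, and each iteration uses one oracle call.

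\textbf{Main obstacle.} The step I expect to be hardest is the growth bound $\Delta_{\tau_s+1}\le(3-2\beta)\Delta_{\tau_s\text{-cycle end}}$ across a serious step. I would handle it via the strong-convexity three-point inequality for the prox subproblem.
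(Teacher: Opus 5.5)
Your proposal is correct and is essentially the paper's argument. The paper sets $\eta=\vep/\max\{2,4\rho/\mu^*\}$, notes that $\widehat S_\eta=S_\vep$ and $\widehat P_\eta=P_\vep$, invokes Theorem~\ref{thm:rbcer-complexity} as a black box, and then applies Proposition~\ref{prop:rbcer-objective-gap}; you instead inline that counting, using the contrapositive bound $\Delta_t>\eta$ for non-$\vep$-optimal centers and a per-cycle telescoping of log-ratios, which amounts to the same thing.

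The growth bound you flag as the main obstacle is already Lemma~\ref{lem:serious-contract}, eq.~\eqref{eq:center-change}. Its proof rests on the nesting $\psi^{\tau_s+1}\geq\psi^{\tau_s}$ and on the serious test in the form \eqref{eq:serious-error}, rather than on the cut $\ell_f(\cdot;y^{\tau_s})$ or on $f(y^{\tau_s})\leq f(x^{\tau_s})$, which alone would only give a factor $3$.
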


\begin{corollary}
\label{cor:full-memory-tuned}
\label{cor:full-memory-L}
Under the hypotheses of Theorem~\ref{thm:main}, set
$\beta=1/2$ and choose
$\rho\in[\min\{\mu^*,L\},\max\{\mu^*,L\}]$.
Then Algorithm~\ref{alg:complete-pb} reaches a serious center satisfying
$f(x^t)-f^*\leq\vep$ after at most
\begin{equation}\label{eq:FM-interval-bigO}
 T_\vep^{\rm FM}
 =O\!\Bigg(k\left(1+\frac{L}{\mu^*}\right)
 \left[1+\log\frac{f(x^0)-f^*}{\vep\min\{1,\mu^*/\rho\}}\right]
 +k\left(1+\frac{L}{\rho}\right)
 \log\frac{2\norm{g(x^0)}^2}{\min\{\rho,\mu^*\}\vep}\Bigg)
\end{equation}
first-order oracle calls.  In particular:
\begin{enumerate}[label=(\roman*)]
\item If $\rho=\mu^*$, then $\gamma_{\mu^*}=7/8$ and
$\theta_{\mu^*}=L/(L+\mu^*/2)$, and the bound becomes
\begin{equation}\label{eq:FM-bigO}
 T_\vep^{\rm FM}
 =O\!\left(
 k\left(1+\frac{L}{\mu^*}\right)
 \left[1+\log\frac{f(x^0)-f^*}{\vep}
 +\log\frac{2\norm{g(x^0)}^2}{\mu^*\vep}\right]
 \right).
\end{equation}
\item If $\rho=L$, then $\theta_L=2/3$ and
$\gamma_L=1-1/(2\max\{2,4L/\mu^*\})$, and the bound becomes
\begin{equation}\label{eq:FM-L-bigO}
 T_\vep^{\rm FM}
 =O\!\Bigg(k\left(1+\frac{L}{\mu^*}\right)
 \left[1+\log\frac{(1+L/\mu^*)(f(x^0)-f^*)}{\vep}\right]
 +k\log\frac{2\norm{g(x^0)}^2}{\min\{L,\mu^*\}\vep}\Bigg).
\end{equation}
\end{enumerate}
\end{corollary}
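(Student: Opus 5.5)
The plan is to specialize Theorem~\ref{thm:main} with $\beta=1/2$ and bound $S_\vep$ and $P_\vep$ in \eqref{eq:epsilon-counts} through elementary estimates of $\log(1/\gamma_\rho)$ and $\log(1/\theta_\rho)$. With $\beta=1/2$ we have $\gamma_\rho=1-1/(2\max\{2,4\rho/\mu^*\})$. Using $\log(1/(1-x))\geq x$ for $x\in(0,1)$ gives
\[
 \frac{1}{\log(1/\gamma_\rho)}\leq 2\max\{2,4\rho/\mu^*\}=O(1+\rho/\mu^*).
\]
Similarly, $\theta_\rho=L/(L+\rho/2)$, so $1/\log(1/\theta_\rho)\leq 1+2L/\rho=O(1+L/\rho)$.

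Since $\rho\in[\min\{\mu^*,L\},\max\{\mu^*,L\}]$, both $\rho/\mu^*$ and $L/\rho$ are at most $\max\{1,L/\mu^*\}$. Hence both prefactors are $O(1+L/\mu^*)$. This step is the only place where the interval assumption on $\rho$ is used.

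For $S_\vep$, the argument of the logarithm is
\[
 \frac{f(x^0)-f^*}{\vep}\cdot\frac{2}{\min\{1/2,\mu^*/(4\rho)\}}
 =\frac{4(f(x^0)-f^*)}{\vep\min\{1,\mu^*/(2\rho)\}}.
\]
Up to constants, its logarithm is $1+\log\bigl((f(x^0)-f^*)/(\vep\min\{1,\mu^*/\rho\})\bigr)$. Adding $1$ for the ceiling gives the first term of \eqref{eq:FM-interval-bigO} after multiplying by $k$.

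For $P_\vep$, the exponent $(3-2\beta)^{S_\vep}=2^{S_\vep}$ requires care, and this is the step I expect to be the main obstacle. It contributes $S_\vep\log 2/\log(1/\theta_\rho)$, which naively gives a product of the two prefactors of order $(1+L/\mu^*)^2$. I would avoid this by bounding the count directly. Write
\[
 S_\vep\cdot\frac{\log 2}{\log(1/\theta_\rho)}
 =\log 2\cdot\frac{\log(1/\gamma_\rho)}{\log(1/\theta_\rho)}\cdot\frac{\log(\cdot)}{\log(1/\gamma_\rho)}.
\]
Since $\log(1/\gamma_\rho)=O(\mu^*/\rho)$ when $\rho\geq\mu^*/2$, and $\log(1/\gamma_\rho)=O(1)$ otherwise, the ratio $\log(1/\gamma_\rho)/\log(1/\theta_\rho)$ is at most $O\bigl(\min\{1,\mu^*/\rho\}(1+L/\rho)\bigr)$. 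On the interval this is $O(1)$: if $\rho\geq\mu^*$ then $(\mu^*/\rho)(1+L/\rho)\leq 1+L\mu^*/\rho^2\leq 2$ up to the endpoint cases, and if $\rho<\mu^*$ then $\rho\geq L$, so $1+L/\rho\leq 2$. Thus the $2^{S_\vep}$ factor costs only $O(S_\vep)$ extra iterations with the $(1+L/\mu^*)$ prefactor, which the first term already absorbs. One must check that the terms with $\min\{\cdot\}$ combine correctly, but the bound holds up to absolute constants.

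The remaining part of $P_\vep$ is $\log\max\{1,\norm{g(x^0)}^2/(\min\{\rho,\mu^*/2\}\vep)\}/\log(1/\theta_\rho)$. This is $O\bigl((1+L/\rho)\log(2\norm{g(x^0)}^2/(\min\{\rho,\mu^*\}\vep))\bigr)$, using $\min\{\rho,\mu^*/2\}\geq\min\{\rho,\mu^*\}/2$ and the fact that $\log\max\{1,\cdot\}$ is at most the stated log plus a constant. Multiplying by $k$ gives \eqref{eq:FM-interval-bigO}.

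The special cases follow by substitution. For $\rho=\mu^*$, we get $\gamma=1-1/8=7/8$, $\theta=L/(L+\mu^*/2)$, and $\min\{1,\mu^*/\rho\}=1$; since $L/\rho=L/\mu^*$, this gives \eqref{eq:FM-bigO}. For $\rho=L$, we get $\theta_L=2/3$, and $1+L/\rho=2$ turns the second term into $k\log(\cdot)$. In the first term, $\min\{1,\mu^*/L\}^{-1}\leq 1+L/\mu^*$ moves inside the logarithm, giving \eqref{eq:FM-L-bigO}.
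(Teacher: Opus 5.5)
Your route matches the paper's: specialize Theorem~\ref{thm:main} with $\beta=1/2$, bound $1/\log(1/\gamma_\rho)\le2\max\{2,4\rho/\mu^*\}$ and $1/\log(1/\theta_\rho)\le1+2L/\rho$, then substitute. However, the step you flagged as the main obstacle is argued incorrectly. Your displayed factorization is not an identity. Its right-hand side collapses to $\log2\cdot\log(\cdot)/\log(1/\theta_\rho)$, whereas $S_\vep\log2/\log(1/\theta_\rho)\approx\log2\cdot\log(\cdot)/\bigl(\log(1/\gamma_\rho)\log(1/\theta_\rho)\bigr)$. So the quantity to control is the \emph{product} of the two prefactors, not their ratio. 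Your claim that $\min\{1,\mu^*/\rho\}(1+L/\rho)=O(1)$ on the interval is also false. At $\rho=\mu^*<L$ (your case (i)) it equals $1+L/\mu^*$, and $1+L\mu^*/\rho^2\le2$ holds only for $\rho\ge\sqrt{L\mu^*}$. Consequently, the assertion that the $2^{S_\vep}$ factor costs only $O(S_\vep)$ extra iterations fails. At $\rho=\mu^*\ll L$ it contributes order $(L/\mu^*)S_\vep$ to $P_\vep$.

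The missing observation is that your ``naive'' estimate is not of order $(1+L/\mu^*)^2$. Bounding $\max\{1,\rho/\mu^*\}$ and $1+L/\rho$ separately discards the fact that they cannot both be large. On the interval, $\max\{1,\rho/\mu^*\}(1+L/\rho)\le2\max\{1,L/\mu^*\}$. For $\mu^*\le\rho\le L$ the product is $\rho/\mu^*+L/\mu^*\le2L/\mu^*$, and for $L\le\rho\le\mu^*$ it is $1+L/\rho\le2$. Hence $(1+2L/\rho)S_\vep=O\bigl((1+L/\mu^*)[1+\log\frac{f(x^0)-f^*}{\vep\min\{1,\mu^*/\rho\}}]\bigr)$, which the first term absorbs. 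This joint bound is how the paper uses the interval assumption, and it is what actually tames the $2^{S_\vep}$ term.

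A smaller omission: replacing $\log\max\{1,\cdot\}$ by the stated logarithm requires its argument to be at least one. This follows from $f(x^0)-f^*\le2\norm{g(x^0)}^2/\mu^*$ (convexity plus quadratic growth) together with $\vep<f(x^0)-f^*$. Without it, ``the stated log plus a constant'' is unjustified, since the stated logarithm could otherwise be negative.
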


We make three remarks regarding Theorems~\ref{thm:rbcer-complexity}
and~\ref{thm:main} and Corollary~\ref{cor:full-memory-tuned}.

First, the factor $k$ reflects the worst-case $k+1$ consecutive null
iterations needed to find two trial points in a common piece.  The dependence on
$L/\mu^*$ reflects the combined serious- and null-step estimates.  The two logarithmic
terms count the reductions of the objective gap at serious steps and of the
sequence $\{\Delta_t\}$ during consecutive null steps, respectively.  Hence, the result establishes global linear
convergence in terms of the number of first-order oracle calls, with a rate
that depends on $k$ and $L/\mu^*$. To the best of our knowledge, this is the
first such result for PBM under $(k,L)$-$\pws$ and quadratic
growth.

Second, the choice of $\rho$ affects the relative costs of serious and null
steps.  Once $\rho>\mu^*/2$, increasing $\rho$ moves $\gamma_\rho$ toward one
and weakens the serious-step contraction, but it also decreases
$\theta_{\rho}$ and improves the contraction over $k+1$ consecutive null steps.  Decreasing $\rho$
has the opposite effect, except that the serious-step contraction no longer
improves once $\rho\leq\mu^*/2$.  Thus, choosing $\rho$ either too large or too
small can worsen the complexity bound.  Every choice
$\rho\in[\min\{\mu^*,L\},\max\{\mu^*,L\}]$ in
Corollary~\ref{cor:full-memory-tuned} yields an
$O(k(1+L/\mu^*))$ prefactor, up to logarithmic terms.

Third, Algorithm~\ref{alg:complete-pb} does not require access to the
piecewise-smooth decomposition.  The sets $\{X_i\}$, the number of pieces $k$,
and the smoothness constant $L$ are not needed for the model updates.
The choice $\rho=L$ in Corollary~\ref{cor:full-memory-tuned} uses $L$ only
to select $\rho$, while the
algorithm accepts any fixed $\rho>0$.  Thus, it applies without modification
when this structure is unknown, provided Assumption~\ref{ass:first-order-oracle}
holds.

\subsection{Detailed proofs}\label{subsec:full-memory-detailed-proofs}

We first prove the serious-step estimates, deriving the required model and
proximal identities within that proof.  We then establish contraction over
$k+1$ consecutive null steps and use these two transition estimates to obtain the complexity
bounds.

Recall that $\tau_s$ is the full iteration index of the $s$-th serious step,
for every $s\geq1$ for which that step occurs.  The points $x^{\tau_s}$ and
$x^{\tau_{s+1}}$ are the proximal centers at successive serious iterations,
whenever both iterations occur.  Since null steps leave the center unchanged,
$x^{\tau_{s+1}}=y^{\tau_s}$.

\begin{lem}\label{lem:serious-contract}
For each $s\geq1$ such that $\tau_{s+1}$ is defined, the sequences $\bcbra{x^{\tau_s}, \Delta_{\tau_s}}$ generated by Algorithm~\ref{alg:complete-pb} satisfy
\begin{equation}\label{eq:serious-contract}
 f(x^{\tau_{s+1}})-f^*\leq
 \gamma_\rho
 \bigl(f(x^{\tau_s})-f^*\bigr),
 \qquad
 \gamma_\rho\in(0,1),
\end{equation}
and
\begin{equation}\label{eq:center-change}
 \Delta_{\tau_s+1}\leq(3-2\beta)\Delta_{\tau_s}.
\end{equation}
\end{lem}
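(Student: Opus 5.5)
The plan is to work at a single serious iteration. Write $x:=x^{\tau_s}$, $y:=y^{\tau_s}=x^{\tau_{s+1}}$, $\psi:=\psi^{\tau_s}$ and $\psi^+:=\psi^{\tau_s+1}\geq\psi$. Set $P:=f(x)-\psi(y)$ and $D:=\frac{\rho}{2}\norm{y-x}^2$, so that $\Delta_{\tau_s}=P-D$ by \eqref{eq:D}.

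The one structural fact I will use throughout is the optimality condition of the trial-point subproblem in line~\ref{line:fm-trial}: $\rho(x-y)\in\partial\psi(y)+N_X(y)$. Hence
\begin{equation*}
 \psi(z)\geq\psi(y)+\rho\inner{x-y}{z-y}\qquad\forall z\in X.
\end{equation*}
Taking $z=x$ and using $\psi(x)\leq f(x)$ gives $\rho\norm{x-y}^2\leq f(x)-\psi(y)$, that is, $2D\leq P$. This is exactly the surrogate bound \eqref{eq:constant_surrogate}: $\Delta_{\tau_s}\leq P\leq2\Delta_{\tau_s}$, or equivalently $D\leq\Delta_{\tau_s}$.

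\textbf{Contraction \eqref{eq:serious-contract}.} The serious test in line~\ref{line:fm-test} gives
\[
 f(x)-f(y)\geq\beta P\geq\beta\Delta_{\tau_s}.
\]
Since $(x,\psi)$ is trivially a $(\rho,\Delta_{\tau_s})$-RBCer, Proposition~\ref{prop:rbcer-objective-gap} with $\widehat\mu=\mu^*$ gives
\[
 \Delta_{\tau_s}\geq\frac{f(x)-f^*}{\max\{2,4\rho/\mu^*\}}.
\]
Subtracting $f^*$ from both sides of the serious-step decrease and inserting this bound yields
\[
 f(y)-f^*\leq\Bigl(1-\frac{\beta}{\max\{2,4\rho/\mu^*\}}\Bigr)\bigl(f(x)-f^*\bigr)=\gamma_\rho\bigl(f(x)-f^*\bigr).
\]
Finally, $\gamma_\rho\in(0,1)$ because $\beta\in(0,1)$ and $\max\{2,4\rho/\mu^*\}\geq2$.

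\textbf{Center-change bound \eqref{eq:center-change}.} This is the step needing care, since both the center and the model change. By definition and $\psi^+\geq\psi$,
\[
 \Delta_{\tau_s+1}=f(y)-\min_{z\in X}\Bigl\{\psi^+(z)+\frac{\rho}{2}\norm{z-y}^2\Bigr\}\leq f(y)-\min_{z\in X}\Bigl\{\psi(z)+\frac{\rho}{2}\norm{z-y}^2\Bigr\}.
\]
Next apply the subgradient inequality above and minimize the resulting quadratic in $z-y$:
\[
 \psi(z)+\frac{\rho}{2}\norm{z-y}^2\geq\psi(y)+\rho\inner{x-y}{z-y}+\frac{\rho}{2}\norm{z-y}^2\geq\psi(y)-D.
\]
Therefore $\Delta_{\tau_s+1}\leq f(y)-\psi(y)+D$. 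The serious test gives $f(y)\leq f(x)-\beta P$, so $f(y)-\psi(y)\leq(1-\beta)P$, and hence $\Delta_{\tau_s+1}\leq(1-\beta)P+D$.

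It remains to verify $(1-\beta)P+D\leq(3-2\beta)(P-D)$. This rearranges to $(4-2\beta)D\leq(2-\beta)P$, i.e.\ $2D\leq P$, which is precisely the inequality obtained at the outset. I expect the only real obstacle to be finding the linear minorant of $\psi$ at $y$ in the third paragraph. Once the normal-cone optimality condition is used both there and for $2D\leq P$, the rest is bookkeeping.
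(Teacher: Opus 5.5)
Your proof is correct and follows essentially the same route as the paper. The proximal optimality condition gives both the surrogate bound $2D\leq P$ and the linear minorant $\psi(z)\geq\psi(y)+\rho\inner{x-y}{z-y}$. The serious test plus Proposition~\ref{prop:rbcer-objective-gap} then yields the contraction, and model nesting plus minimizing the quadratic yields $\Delta_{\tau_s+1}\leq(1-\beta)P+D\leq(3-2\beta)\Delta_{\tau_s}$. The only cosmetic difference is that you obtain $2D\leq P$ from minorancy $\psi(x)\leq f(x)$ rather than from center exactness, which is a harmless simplification.
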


\begin{proof}
The initialization and the center/model updates show inductively that every
$\psi^t$ is a finite maximum of affine cutting planes, satisfies
$\psi^t(x)\leq f(x)$ for every $x\in X$, and is exact at the center:
$\psi^t(x^t)=f(x^t)$.  Moreover, $\psi^{t+1}(x)\geq\psi^t(x)$ for every
$x\in X$.  Such a model has an affine lower bound, so the
proximal objective in
line~\ref{line:fm-trial} is coercive and $\rho$-strongly convex on the closed
convex set $X$.  Hence $y^t$ exists uniquely and satisfies
\begin{equation}\label{eq:prox-optimality}
 0\in\partial\psi^t(y^t)+\rho(y^t-x^t)+N_X(y^t), 
\end{equation}
and 
\[
 \psi^t(x)+\frac{\rho}{2}\norm{x-x^t}^2
 \geq\psi^t(y^t)+\frac{\rho}{2}\norm{y^t-x^t}^2
       +\frac{\rho}{2}\norm{x-y^t}^2,
 \qquad x\in X.
\]
Taking $x=x^t$ and using $\psi^t(x^t)=f(x^t)$ gives
\begin{equation}\label{eq:proximal-bounds}
 \Delta_t\geq\frac{\rho}{2}\norm{y^t-x^t}^2\geq0,
 \qquad
 \Delta_t\leq f(x^t)-\psi^t(y^t)\leq2\Delta_t.
\end{equation}
Thus $(x^t,\psi^t)$ is a $(\rho,\Delta_t)$-RBCer.

We are ready to prove the two assertions.  First, consider the objective-gap
contraction.
Fix $s\geq1$ for which $\tau_s$ is defined, and set $t=\tau_s$.
Using the serious-step test,
\eqref{eq:proximal-bounds}, and
Proposition~\ref{prop:rbcer-objective-gap}, we obtain
\[
 f(y^t)-f^*
 \leq f(x^t)-f^*-\beta\bigl(f(x^t)-\psi^t(y^t)\bigr)
 \leq f(x^t)-f^*-\beta \Delta_t
 \leq\gamma_\rho
 \bigl(f(x^t)-f^*\bigr).
\]
This proves the bound for the accepted trial point.  Whenever the next
serious step occurs, $x^{\tau_{s+1}}=y^t$, which gives
\eqref{eq:serious-contract}.
Moreover, $\max\{2,4\rho/\mu^*\}\geq2$ and $\beta\in(0,1)$ imply
$0<\beta/\max\{2,4\rho/\mu^*\}<1$.  Therefore $\gamma_\rho\in(0,1)$.

Second, we bound $\Delta_{t+1}$ when a serious step changes the center.
Fix $s\geq1$ for which $\tau_s$ is defined, and set $t=\tau_s$.  At this
serious step, $x^{t+1}=y^t$, and the full-memory update gives
$\psi^{t+1}(x)\geq\psi^t(x)$ for every $x\in X$.
By \eqref{eq:prox-optimality}, there exist
$v^t\in\partial \psi^t(y^t)$ and $n^t\in N_X(y^t)$ such that
$v^t+\rho(y^t-x^t)+n^t=0$.  The subgradient and normal-cone inequalities
then imply that, for every $x\in X$,
\[
 \psi^t(x)\geq \psi^t(y^t)+\rho\inner{x^t-y^t}{x-y^t}.
\]
Adding $\rho\norm{x-y^t}^2/2$ and using
$\psi^{t+1}(x)\geq\psi^t(x)$ gives, for every $x\in X$,
\[
\begin{aligned}
 \psi^{t+1}(x)+\frac{\rho}{2}\norm{x-y^t}^2
 &\geq\psi^t(y^t)+\rho\inner{x^t-y^t}{x-y^t}
       +\frac{\rho}{2}\norm{x-y^t}^2\\
 &\geq\psi^t(y^t)-\frac{\rho}{2}\norm{x^t-y^t}^2.
\end{aligned}
\]
 Minimizing the
left-hand side over $X$ yields
\[
 \min_{x\in X}\left\{\psi^{t+1}(x)
 +\frac{\rho}{2}\norm{x-y^t}^2\right\}
 \geq \psi^t(y^t)-\frac{\rho}{2}\norm{y^t-x^t}^2.
\]
Consequently, by the definition of $\Delta_{t+1}$, we have
\[
 \Delta_{t+1}\leq f(y^t)-\psi^t(y^t)
 +\frac{\rho}{2}\norm{y^t-x^t}^2.
\]
At a serious step, the test in line~\ref{line:fm-test} and the identity
$
f(x^t)-f(y^t)=f(x^t)-\psi^t(y^t)
                 -\bigl(f(y^t)-\psi^t(y^t)\bigr)
$
give
\begin{equation}\label{eq:serious-error}
 f(y^t)-\psi^t(y^t)
 \leq(1-\beta)\bigl(f(x^t)-\psi^t(y^t)\bigr).
\end{equation}
Substituting~\eqref{eq:serious-error} into the bound for $\Delta_{t+1}$ and then
using~\eqref{eq:D} and~\eqref{eq:proximal-bounds}, we obtain
\[
\begin{aligned}
 \Delta_{t+1}
 &\leq(1-\beta)\bigl(f(x^t)-\psi^t(y^t)\bigr)
       +\frac{\rho}{2}\norm{y^t-x^t}^2\\
 &=(1-\beta)\Delta_t+(2-\beta)\frac{\rho}{2}\norm{y^t-x^t}^2\\
 &\leq(3-2\beta)\Delta_t
\end{aligned}\ .
\]
\end{proof}

\begin{proof}[Proof of Proposition~\ref{prop:full-null-contraction}]
We split the proof into three steps.  First, 
 Suppose iterations $l,l+1,\ldots,r-1$ are null and let $c$ denote their
center.  We claim that
\begin{equation}\label{eq:full-bridge}
  \Delta_l-\Delta_r\geq\frac{\rho}{2}\norm{y^r-y^l}^2.
\end{equation}
Because iterations $l,\ldots,r-1$ are null, their center does not change;
thus $x^l=x^r=c$.  The full-memory update also gives $\psi^r(x)\geq \psi^l(x)$
for every $x\in X$.  In particular,
\begin{equation}\label{eq:eq1}
   \psi^r(y^r)+\frac{\rho}{2}\norm{y^r-c}^2
 \geq \psi^l(y^r)+\frac{\rho}{2}\norm{y^r-c}^2.
\end{equation}

The function
$\psi^l(x)+\frac{\rho}{2}\norm{x-c}^2$
is $\rho$-strongly convex on $X$ and is minimized at $y^l$.  Therefore,
evaluating its strong-convexity inequality at $y^r\in X$ gives
\begin{equation}\label{eq:eq2}
 \psi^l(y^r)+\frac{\rho}{2}\norm{y^r-c}^2
 \geq \psi^l(y^l)+\frac{\rho}{2}\norm{y^l-c}^2
      +\frac{\rho}{2}\norm{y^r-y^l}^2.
\end{equation}
Since $x^l=x^r=c$, the definition of $\Delta_t$ gives
\begin{align*}
 \Delta_l-\Delta_r
 &=\left[f(c)-\psi^l(y^l)-\frac{\rho}{2}\norm{y^l-c}^2\right]
   -\left[f(c)-\psi^r(y^r)-\frac{\rho}{2}\norm{y^r-c}^2\right]\\
 &=\psi^r(y^r)+\frac{\rho}{2}\norm{y^r-c}^2
   -\psi^l(y^l)-\frac{\rho}{2}\norm{y^l-c}^2\\
 &\geq\frac{\rho}{2}\norm{y^r-y^l}^2,
\end{align*}
where the last inequality follows by combining
\eqref{eq:eq1} and~\eqref{eq:eq2}.  This proves
\eqref{eq:full-bridge}.
Taking $l=t$ and $r=t+1$ gives $\Delta_t-\Delta_{t+1}\geq0$ at every null step,
proving~\eqref{eq:full-null-monotonicity}.

Second, suppose $l<r$, all iteration $l,\ldots,r$ are null, the trial points $y^l$ and $y^r$ lie in a common piece,
and iteration $r$ is null, then we claim that
 \begin{equation}\label{eq:matching-claim}
  \Delta_r<\theta_{\rho}\Delta_l.
 \end{equation}
Full memory retains $\ell_f(\cdot;y^l)$, so the same-piece upper model
\eqref{eq:pws} gives
\begin{equation}\label{eq:model-error-same-piece}
 f(y^r)-\psi^r(y^r)
 \leq f(y^r)-\ell_f(y^r;y^l)
 \leq\frac{L}{2}\norm{y^r-y^l}^2.
\end{equation}
Because iteration $r$ is null, the strict reverse of the test in
line~\ref{line:fm-test} gives
\begin{equation}\label{eq:null-error}
 f(y^r)-\psi^r(y^r)
 >(1-\beta)\bigl(f(x^r)-\psi^r(y^r)\bigr)
 \geq(1-\beta)\Delta_r.
\end{equation}
Combining this estimate with \eqref{eq:full-bridge} and
\eqref{eq:null-error} yields
$
 (1-\beta)\Delta_r
 <{L}(\Delta_l-\Delta_r)/{\rho}.
$
Rearranging gives~\eqref{eq:matching-claim}.

Third, the pigeonhole principle gives indices $u\leq l<r\leq u+k$ such
that $y^l$ and $y^r$ lie in a common piece, since the $k+1$ trial points
$y^u,\ldots,y^{u+k}$ belong to a union of $k$ pieces.
All iterations from $u$ through $u+k$ are null, so~\eqref{eq:full-null-monotonicity}
gives $\Delta_l\leq \Delta_u$ and $\Delta_{u+k+1}\leq \Delta_r$.
Combining these bounds with~\eqref{eq:matching-claim}, we obtain
\[
 \Delta_{u+k+1}\leq \Delta_r
 <\theta_{\rho}\Delta_l
 \leq\theta_{\rho}\Delta_u.
\]
\end{proof}

\begin{proof}[\underline{Proof of Theorem~\ref{thm:rbcer-complexity}}]
The initialization satisfies $\psi^0(x)=\ell_f(x;x^0)$ for every $x\in X$.
Thus, completing the square gives
\begin{equation}\label{eq:initial-certificate-bound}
\begin{aligned}
 \Delta_0
 &=\max_{x\in X}\left\{
 -\inner{g(x^0)}{x-x^0}-\frac{\rho}{2}\norm{x-x^0}^2\right\}\\
 &=\max_{x\in X}\left\{
 \frac{\norm{g(x^0)}^2}{2\rho}
 -\frac{\rho}{2}\norm{x-x^0+\frac{g(x^0)}{\rho}}^2\right\}
 \leq\frac{\norm{g(x^0)}^2}{2\rho}.
\end{aligned}
\end{equation}
If $\Delta_0\leq\eta$, the initial pair $(x^0,\psi^0)$ is a $(\rho,\eta)$-RBCer,
so the bound holds after the initial model subproblem and trial-point oracle
evaluation.  Suppose therefore that $\Delta_0>\eta$.  We split the
counting argument into three steps.

\textbf{Step 1: count serious steps.}
At every serious iteration $t$, the acceptance test and~\eqref{eq:D} give
\[
 \beta \Delta_t
 \leq\beta\bigl(f(x^t)-\psi^t(y^t)\bigr)
 \leq f(x^t)-f(y^t)
 \leq f(x^t)-f^*.
\]
Since null steps leave the center unchanged, iterating
\eqref{eq:serious-contract} from $x^0$ and applying this bound gives, for every
$s\geq1$ for which $\tau_s$ is defined,
\[
 \Delta_{\tau_s}\leq\frac{f(x^{\tau_s})-f^*}{\beta}
 \leq\frac{\gamma_\rho^{s-1}}{\beta}
 \bigl(f(x^0)-f^*\bigr).
\]
 Hence at most $\widehat S_\eta$ serious steps occur
before finding the first iteration $t$ satisfying $\Delta_t\leq\eta$.

\textbf{Step 2: count contractions over consecutive null steps.}
Let $S\leq\widehat S_\eta$ be the number of serious steps taken before
finding the first  $\Delta_t$ satisfying $\Delta_t\leq\eta$.
Separate the null steps into the $S+1$ possibly empty sequences occurring
before, between, and after these serious steps.  Within each sequence, partition
all but at most $k$ iterations into disjoint consecutive subsequences of exactly
$k+1$ null iterations, and let $P$ be the total number of these subsequences.
We use both estimates in Proposition~\ref{prop:full-null-contraction}.
For each complete subsequence $u,\ldots,u+k$, the contraction estimate
\eqref{eq:full-null-contraction} gives
$\Delta_{u+k+1}\leq\theta_\rho \Delta_u$.
For all remaining null steps, the monotonicity estimate
\eqref{eq:full-null-monotonicity} gives $\Delta_{t+1}\leq \Delta_t$.
At each serious iteration $\tau_s$, \eqref{eq:center-change} in
Lemma~\ref{lem:serious-contract} gives
$\Delta_{\tau_s+1}\leq(3-2\beta)\Delta_{\tau_s}$.
Combining these bounds, each complete subsequence contributes a factor
$\theta_\rho$, each serious step contributes a factor $3-2\beta>1$, and
the other null steps contribute no increase.  Thus, after $p$ complete
subsequences and at most $\widehat S_\eta$ serious steps, the current iteration
$t$ satisfies
\[
 \Delta_t\leq \Delta_0(3-2\beta)^{\widehat S_\eta}\theta_{\rho}^p
 \leq\frac{\norm{g(x^0)}^2}{2\rho}
 (3-2\beta)^{\widehat S_\eta}\theta_{\rho}^p,
\]
where the last inequality uses~\eqref{eq:initial-certificate-bound}.
By the definition of $\widehat P_\eta$, this bound gives $\Delta_t\leq\eta$ once
the contraction estimate~\eqref{eq:full-null-contraction} has been applied
to $\widehat P_\eta$ complete subsequences.  Thus
$P\leq\widehat P_\eta$.

\textbf{Step 3: count all first-order oracle calls.}
With $S$ serious steps as separators, the null steps form $S+1$ possibly
empty sequences.  This includes any null steps after the last serious step
and before finding the first iteration $t$ satisfying $\Delta_t\leq\eta$;
if $S=0$, there is a single sequence.
The $P$ complete subsequences account for $(k+1)P$ null steps.
Each of the $S+1$ sequences has at most $k$ remaining null steps, giving
at most $k(S+1)$ in total.  Hence the total number $N_0$ of null steps satisfies
\[
 N_0\leq(k+1)P+k(S+1).
\]
Using
$S\leq\widehat S_\eta$ and $P\leq\widehat P_\eta$ proves
\eqref{eq:computable-stop-complexity}.
\end{proof}

\begin{proof}[\underline{Proof of Theorem~\ref{thm:main}}]
Set
\[
 \eta:=\vep\min\left\{\frac12,\frac{\mu^*}{4\rho}\right\}
 =\frac{\vep}{\max\{2,4\rho/\mu^*\}}.
\]
Then $0<\eta<(f(x^0)-f^*)/\beta$, and substitution into
\eqref{eq:eta-counts} gives
$\widehat S_\eta=S_\vep$ and $\widehat P_\eta=P_\vep$.
Theorem~\ref{thm:rbcer-complexity} produces a $(\rho,\eta)$-RBCer
$(x^t,\psi^t)$ within $O(k(S_\vep+P_\vep))$ first-order oracle calls.
Applying Proposition~\ref{prop:rbcer-objective-gap} with
$\widehat\mu=\mu^*$ yields
\[
 f(x^t)-f^*\leq\max\{2,4\rho/\mu^*\}\eta=\vep.
\]
Since $f(x^0)-f^*>\vep$ and null steps leave the center unchanged, the
first center satisfying $f(x)-f^*\leq\vep$ is produced by a serious step.
This proves~\eqref{eq:full-memory-complexity}.
\end{proof}

\begin{proof}[\underline{Proof of Corollary~\ref{cor:full-memory-tuned}}]
Recall that $\gamma_\rho=1-\beta/\max\{2,4\rho/\mu^*\}$ and
$\theta_\rho=L/(L+(1-\beta)\rho)$.
For $\beta=1/2$, the inequalities $-\log(1-u)\geq u$ and
$\log(1+u)\geq u/(1+u)$ give
\[
 \frac{1}{\log(1/\gamma_\rho)}
 \leq2\max\{2,4\rho/\mu^*\},\qquad
 \frac{1}{\log(1/\theta_\rho)}\leq1+\frac{2L}{\rho}.
\]
The definition of $S_\vep$ in~\eqref{eq:epsilon-counts} therefore yields
\[
 S_\vep=O\!\left(\max\{1,\rho/\mu^*\}
 \left[1+\log\frac{\max\{1,\rho/\mu^*\}(f(x^0)-f^*)}{\vep}\right]\right).
\]
Convexity and quadratic growth imply
$2\norm{g(x^0)}^2/\mu^*\geq f(x^0)-f^*>\vep$.
Using $3-2\beta=2$ and
\[
 \frac{\max\{2,4\rho/\mu^*\}}{2\rho}
 \leq\frac{2}{\min\{\rho,\mu^*\}}
\]
in the definition of $P_\vep$ gives
\[
 P_\vep=O\!\left(\left(1+\frac{L}{\rho}\right)
 \left[S_\vep+
 \log\frac{2\norm{g(x^0)}^2}{\min\{\rho,\mu^*\}\vep}\right]\right).
\]
For $\rho\in[\min\{\mu^*,L\},\max\{\mu^*,L\}]$, we have
\[
 \max\{1,\rho/\mu^*\}\left(1+\frac{L}{\rho}\right)
 \leq2\max\{1,L/\mu^*\}\leq2\left(1+\frac{L}{\mu^*}\right).
\]
Substituting these estimates into~\eqref{eq:full-memory-complexity}
proves~\eqref{eq:FM-interval-bigO}.  Setting $\rho=\mu^*$ and $\rho=L$
gives~\eqref{eq:FM-bigO} and~\eqref{eq:FM-L-bigO}, respectively.
\end{proof}

\section{Limited-memory PBM}
\label{sec:limited_memory_algorithm}

The full-memory method in
Section~\ref{sec:proximal_bundle_algorithm} retains every cutting plane
generated throughout the run.  Consequently, its model grows with the
iteration count, and solving the proximal subproblem may become increasingly
expensive.  To control the model size, we introduce LM-PBM, which periodically
compresses the accumulated
cutting planes through aggregation.  Subsection~\ref{subsec:lm-algorithm}
presents the algorithm and its key idea,
Subsection~\ref{subsec:lm-guarantees} states its convergence guarantees, and
Subsection~\ref{subsec:limited-memory-detailed-proofs} contains the detailed
proofs.

\subsection{The algorithm and key idea}
\label{subsec:lm-algorithm}
Because the full-memory method retains all previously generated cutting
planes, its models are pointwise nested; that is,
$\psi^{t+1}(x)\geq\psi^t(x)$ for every $x\in X$.
At a null step, this nesting, together with strong convexity, yields the
following proximal bridge:
\begin{equation}\label{eq:key-proximal-bridge}
 \psi^{t+1}(x)+\frac{\rho}{2}\norm{x-x^t}^2\geq
 \psi^t(y^t)+\frac{\rho}{2}\norm{y^t-x^t}^2
 +\frac{\rho}{2}\norm{x-y^t}^2,\qquad \forall x\in X.
\end{equation}
The null-step analysis relies crucially on~\eqref{eq:key-proximal-bridge}.
However, if only recent cutting planes are retained, the nesting condition
$\psi^{t+1}(x)\geq\psi^t(x)$ for every $x\in X$ may fail,
so~\eqref{eq:key-proximal-bridge} no longer follows directly from model nesting.
This motivates constructing an aggregate
plane that compresses the model while preserving the proximal
bridge~\eqref{eq:key-proximal-bridge}.

Aggregate cutting planes were introduced by Kiwiel~\cite{kiwiel1983aggregate}
and are used in subsequent bundle-method analyses~\cite{diaz2023optimal,
du2017rate,de2014convex,liang2024unified,guigues2024universal}.
Given the solution $y^t\leftarrow \argmin_{x\in X}\left\{ \psi^t(x)+\frac{\rho}{2}\norm{x-x^t}^2 \right\}$ of the proximal subproblem, we define the affine
aggregate cutting plane $a^t:X\to\mbb R$ by
\begin{equation}\label{eq:aggregate}
  a^t(x):=\psi^t(y^t)+\rho\inner{x^t-y^t}{x-y^t}.
\end{equation}
Let $J_t$ be the finite index set of affine planes in $\psi^t$, including an
aggregate plane when present, so that
$\psi^t(x)=\max_{j\in J_t}\ell_j(x)$ for every $x\in X$.  With
$\mathcal{S}(J_t):=\{\lambda\geq0:\sum_{j\in J_t}\lambda_j=1\}$, we have
$\max_{j\in J_t}\ell_j(x)=
\max_{\lambda\in\mathcal{S}(J_t)}\sum_{j\in J_t}\lambda_j\ell_j(x)$
for every $x\in X$.
The simplex $\mathcal{S}(J_t)$ is compact,
and the objective is continuous, convex in $x$, and affine in $\lambda$.
Hence, Sion's minimax theorem~\cite[p.~174]{sion1958general} gives
\[
 \min_{x\in X}\max_{j\in J_t}
 \left\{\ell_j(x)+\frac{\rho}{2}\norm{x-x^t}^2\right\}
 =
 \max_{\lambda\in\mathcal{S}(J_t)}
 \min_{x\in X}\left\{\sum_{j\in J_t}\lambda_j\ell_j(x)
 +\frac{\rho}{2}\norm{x-x^t}^2\right\}.
\]
The inner objective is coercive in $x$, and the dual value is upper
semicontinuous on the compact simplex, so both optima are attained.  For an
optimal dual solution $\lambda^t$, the Karush--Kuhn--Tucker conditions and
complementary slackness give
$n^t\in N_X(y^t)$ such that, for every $x\in X$,
\[
 \sum_{j\in J_t}\lambda_j^t\ell_j(x)+\inner{n^t}{x-y^t}
 =\psi^t(y^t)+\rho\inner{x^t-y^t}{x-y^t}=a^t(x),
\]
which recovers~\eqref{eq:aggregate} and identifies $\lambda^t$ as the
aggregation weights.  If $X=\mbb R^n$, then $n^t=0$.  In general, the
normal-cone inequality gives
\[
 a^t(x)\leq\sum_{j\in J_t}\lambda_j^t\ell_j(x)\leq\psi^t(x)\leq f(x),
 \qquad \forall x\in X.
\]
Finally, completing the square gives
\begin{equation}\label{eq:aggregate-preservation}
 a^t(x)+\frac{\rho}{2}\norm{x-x^t}^2
 =\psi^t(y^t)+\frac{\rho}{2}\norm{y^t-x^t}^2
 +\frac{\rho}{2}\norm{x-y^t}^2,\qquad \forall x\in X.
\end{equation}
Each model update described below satisfies $\psi^{t+1}(x)\geq a^t(x)$
for every $x\in X$.
Thus,~\eqref{eq:aggregate-preservation} implies the proximal
bridge~\eqref{eq:key-proximal-bridge}.  

Algorithm~\ref{alg:lm-one-step} describes one LM-PBM iteration, using the
same proximal subproblem and serious-step test as FM-PBM.
After $b$ is reset to zero, each of the first $B$ consecutive null steps
adds a cutting plane to the current model (line~\ref{line:lm-append}).
At a serious step or the $(B+1)$-st consecutive null step after the reset,
the model is replaced by the maximum of the aggregate plane and the cuts
at $x^{t+1}$ and $y^t$ (line~\ref{line:lm-compress}).  The last two cuts
coincide at a serious step because $x^{t+1}=y^t$.
Algorithm~\ref{alg:lm-pbm} starts with a single cutting plane and repeats
this iteration.

\begin{algorithm}[H]
\caption{$\onestep(x^t,\psi^t,\rho,\bar{f},b,\beta,B)$}
\label{alg:lm-one-step}
\begin{algorithmic}[1]
  \State $\displaystyle y^t\gets\argmin_{x\in X}
  \left\{\psi^t(x)+\frac{\rho}{2}\norm{x-x^t}^2\right\}$, $\Delta_t\gets f(x^t) - \psi^t(y^t) - \frac{\rho}{2}\norm{x^t-y^t}^2$, $\bar{f}=\min\bcbra{\bar{f}, f(y^t)}$
  \If{$f(x^t)-f(y^t)\geq
  \beta\bigl(f(x^t)-\psi^t(y^t)\bigr)$}
    \State $x^{t+1}\gets y^t$ \Comment{serious step}
  \Else
    \State $x^{t+1}\gets x^t$ \Comment{null step}
  \EndIf
  \If{iteration $t$ is serious or $b=B$}
    \State $a^t(x)\gets\psi^t(y^t)+\rho\inner{x^t-y^t}{x-y^t}$,
    $\psi^{t+1}(x)\gets
    \max\{a^t(x),\ell_f(x;x^{t+1}),\ell_f(x;y^t)\}$, and $b\gets0$
    \label{line:lm-compress}
  \Else
    \State $\psi^{t+1}(x)\gets
    \max\{\psi^t(x),\ell_f(x;y^t)\}$; $b\gets b+1$
    \label{line:lm-append}
  \EndIf
  \State \textbf{Return} $(x^{t+1}, \psi^{t+1}, y^t, b, \Delta_t, \bar{f})$
\end{algorithmic}
\end{algorithm}

% \begin{algorithm}[H]
% \caption{$\onestep aggregate steps(x^t,\psi^t,\rho,\bar{f},b,\beta,B)$}
% \label{alg:lm-one-step}
% \begin{algorithmic}[1]
%   \State $\displaystyle y^t\gets\argmin_{x\in X}
%   \left\{\psi^t(x)+\frac{\rho}{2}\norm{x-x^t}^2\right\}$, $\Delta_t\gets f(x^t) - \psi^t(y^t) - \frac{\rho}{2}\norm{x^t-y^t}^2$, $\bar{f}=\min\bcbra{\bar{f}, f(y^t)}$
%   \If{$f(x^t)-f(y^t)\geq
%   \beta\bigl(f(x^t)-\psi^t(y^t)\bigr)$}
%     \State $x^{t+1}\gets y^t$ \Comment{serious step}
%   \Else
%     \State $x^{t+1}\gets x^t$ \Comment{null step}
%   \EndIf
%     \State $a^t(x)\gets\psi^t(y^t)+\rho\inner{x^t-y^t}{x-y^t}$, $\psi^{t+1}(x)\gets
%     \max\{a^{t}(x), \ell_f(x;y^t), \ell_f(x;y^{t - 1}), \ldots,\ell_f(x;y^{t - B}) \}$;
%     \label{line:lm-append}
%   \State \textbf{Return} $(x^{t+1}, \psi^{t+1}, y^t, b, \Delta_t, \bar{f})$
% \end{algorithmic}
% \end{algorithm}

\begin{algorithm}[H]
\caption{Limited-memory PBM (LM-PBM)}
\label{alg:lm-pbm}
\begin{algorithmic}[1]
\Require $x^0\in X$, $\rho>0$, $\beta\in(0,1)$, and bundle size $B\geq k$
\State $\psi^0\gets\ell_f(\cdot;x^0)$, $b\gets0$, and $\bar f\gets f(x^0)$
\label{line:lm-initialize}
\For{$t=0,1,2,\ldots$}
  \State $(x^{t+1},\psi^{t+1},y^t,b,\Delta_t,\bar{f})\gets
  \onestep(x^t,\psi^t,\rho,\bar{f},b,\beta,B)$
\EndFor
\end{algorithmic}
\end{algorithm}

The counter $b$ records the ordinary null updates since the most recent reset.
A compression triggered by a serious step leaves at most two distinct cutting
planes, whereas a compression triggered by the $(B+1)$-st consecutive null
iteration leaves at most three.  The next $B$ null updates add at most $B$
more, so LM-PBM stores at most $B+3$ cutting planes.

For convergence, the full-memory analysis applies to consecutive null
iterations before compression, while~\eqref{eq:aggregate-preservation}
supplies the proximal bridge when compression occurs.  If $b=0$ at the start
of iteration $u$ and iterations $u,\ldots,u+B$ are all null, then their
$B+1\geq k+1$ trial points contain two points in a common piece.  This yields
the same contraction factor $\theta_{\rho}$ for $\Delta_t$ as in
\eqref{eq:smooth-null-contraction}.  The serious-step contraction
\eqref{eq:serious-contract} is unchanged.  The details are deferred to
Subsection~\ref{subsec:limited-memory-detailed-proofs}.

\subsection{Convergence guarantees}
\label{subsec:lm-guarantees}

We next state the global complexity guarantees for
Algorithm~\ref{alg:lm-pbm}.  The first theorem bounds the number of
first-order oracle calls required to obtain a $(\rho,\eta)$-RBCer, while the
second bounds the number required to reach an $\vep$-optimal serious center.
The subsequent corollary specializes the second theorem to
$\rho\in[\min\{\mu^*,L\},\max\{\mu^*,L\}]$ and $\beta=1/2$,
with $\rho=\mu^*$ and $\rho=L$ as two special cases.

\begin{thm}
\label{thm:lm-rbcer-complexity}
Let $B\geq k$ be the bundle size in Algorithm~\ref{alg:lm-pbm}.
Given $0<\eta<\bigl(f(x^0)-f^*\bigr)/\beta$, terminate
Algorithm~\ref{alg:lm-pbm} at the first iteration for which $\Delta_t\leq\eta$,
and define $\widehat S_\eta$ and $\widehat P_\eta$ by
\eqref{eq:eta-counts}.  Then
Algorithm~\ref{alg:lm-pbm} produces a $(\rho,\eta)$-RBCer after at
most
\begin{equation}\label{eq:LM-computable-stop}
 O\brbra{B(\widehat S_\eta+\widehat P_\eta)}
\end{equation}
first-order oracle calls.
\end{thm}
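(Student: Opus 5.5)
The plan is to mirror the proof of Theorem~\ref{thm:rbcer-complexity}, replacing model nesting by the aggregate-plane identity~\eqref{eq:aggregate-preservation}, and replacing blocks of $k+1$ null steps by reset-aligned blocks of $B+1$ null steps.

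\textbf{Step 1: structural facts.} First I verify by induction that every $\psi^t$ produced by Algorithm~\ref{alg:lm-one-step} satisfies three properties: it is a finite maximum of affine minorants of $f$; it is exact at the center, $\psi^t(x^t)=f(x^t)$; and it obeys $\psi^{t+1}\geq a^t$ on $X$. Exactness holds because the cut $\ell_f(\cdot;x^{t+1})$ is kept at compression, while in the append branch the center does not move and the model only grows. The relation $\psi^{t+1}\geq a^t$ is immediate at compression. In the append branch I use $\psi^{t+1}\geq\psi^t\geq a^t$, where $a^t\leq\psi^t$ was shown in Subsection~\ref{subsec:lm-algorithm}.

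With these facts, \eqref{eq:proximal-bounds} and the serious-step part of Lemma~\ref{lem:serious-contract} go through verbatim. That proof only used $\psi^{t+1}\geq a^t$, since the inequality $\psi^t(x)\geq\psi^t(y^t)+\rho\inner{x^t-y^t}{x-y^t}$ is exactly $\psi^t\geq a^t$. This gives \eqref{eq:serious-contract} and $\Delta_{\tau_s+1}\leq(3-2\beta)\Delta_{\tau_s}$. The serious-step count $S\leq\widehat S_\eta$ then follows exactly as in Step~1 of the proof of Theorem~\ref{thm:rbcer-complexity}.

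\textbf{Step 2: null-step estimates.} For a null iteration $t$, whichever branch is taken, $\psi^{t+1}\geq a^t$ and $x^{t+1}=x^t$. Combining this with~\eqref{eq:aggregate-preservation} gives the bridge~\eqref{eq:key-proximal-bridge}. Evaluating it at $y^{t+1}$ yields $\Delta_t-\Delta_{t+1}\geq\frac{\rho}{2}\norm{y^{t+1}-y^t}^2\geq0$, so $\Delta_t$ is monotone along null steps.

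For contraction, suppose $b=0$ at the start of a null iteration $u$ and iterations $u,\ldots,u+B$ are null. Then $u,\ldots,u+B-1$ are append steps, so $\psi^r\geq\psi^l$ and $\psi^r\geq\ell_f(\cdot;y^l)$ for $u\leq l<r\leq u+B$. Hence \eqref{eq:full-bridge} and \eqref{eq:matching-claim} hold within this window with the same proofs. Since $B+1\geq k+1$, pigeonhole gives $\Delta_{u+B+1}\leq\Delta_r<\theta_\rho\Delta_l\leq\theta_\rho\Delta_u$, where the first inequality uses monotonicity across the compression at $u+B$. I expect this step to be the main obstacle: the nesting must hold on exactly the index range used, and the windows must be aligned with resets. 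Alignment works out because after any serious step or any $(B+1)$-st null step $b$ resets to $0$. Consequently each maximal run of null steps between serious steps decomposes into complete reset-aligned blocks of $B+1$ null steps plus at most $B$ leftover null steps. The run before the first serious step starts with $b=0$ from the initialization.

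\textbf{Step 3: counting.} I repeat Steps~2 and~3 of the proof of Theorem~\ref{thm:rbcer-complexity} with $k$ replaced by $B$. First, $\Delta_0\leq\norm{g(x^0)}^2/(2\rho)$. Each complete block multiplies $\Delta$ by at most $\theta_\rho$, each serious step by at most $3-2\beta$, and the other null steps by at most one. Hence the number of complete blocks satisfies $P\leq\widehat P_\eta$. The null steps then number at most $(B+1)P+B(S+1)$, and there are $S$ serious steps. Each iteration costs $O(1)$ oracle calls: one at $y^t$, and $\ell_f(\cdot;x^{t+1})$ is either already known or equals a cut at a trial point. This gives the bound $O(B(\widehat S_\eta+\widehat P_\eta))$ in~\eqref{eq:LM-computable-stop}.
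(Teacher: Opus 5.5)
Your proposal is correct and matches the paper's proof: model validity and center exactness, the serious-step estimates obtained from $\psi^{t+1}\geq a^t$, monotonicity along null steps, contraction over each reset-aligned block of $B+1$ null steps by pigeonhole, and the counting argument of Theorem~\ref{thm:rbcer-complexity} with $B$ in place of $k$. The only cosmetic difference is that you obtain null-step monotonicity uniformly from $\psi^{t+1}\geq a^t$ and \eqref{eq:aggregate-preservation}, whereas Proposition~\ref{prop:lm-transition} treats the append case (via strong convexity) and the compression case (via the aggregate identity) separately.
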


\begin{thm}
\label{thm:lm-objective-gap}
Let $B\geq k$ be the bundle size in Algorithm~\ref{alg:lm-pbm}.  For
$0<\vep<f(x^0)-f^*$, let $S_\vep$ and $P_\vep$ be defined by
\eqref{eq:epsilon-counts}.  Then
Algorithm~\ref{alg:lm-pbm} reaches a serious center satisfying
$f(x^t)-f^*\leq\vep$ after at most
\begin{equation}\label{eq:LM-exact}
 T_\vep^{\text{LM-PBM}} = O\brbra{B(S_\vep+P_\vep)}
\end{equation}
first-order oracle calls.
\end{thm}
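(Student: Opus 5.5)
The plan mirrors the proof of Theorem~\ref{thm:main}, with Theorem~\ref{thm:lm-rbcer-complexity} in place of Theorem~\ref{thm:rbcer-complexity}. Set
\[
 \eta:=\vep\min\left\{\frac12,\frac{\mu^*}{4\rho}\right\}=\frac{\vep}{\max\{2,4\rho/\mu^*\}}.
\]
Since $\max\{2,4\rho/\mu^*\}\geq2>\beta$ and $\vep<f(x^0)-f^*$, we have $0<\eta<(f(x^0)-f^*)/\beta$. Substituting this $\eta$ into \eqref{eq:eta-counts} gives $\widehat S_\eta=S_\vep$ exactly. For $\widehat P_\eta$, note that $2\rho\eta=\min\{\rho,\mu^*/2\}\vep$, so the arguments of the logarithms agree and $\widehat P_\eta=P_\vep$. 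Hence Theorem~\ref{thm:lm-rbcer-complexity} applies. Within $O(B(S_\vep+P_\vep))$ first-order oracle calls, Algorithm~\ref{alg:lm-pbm} reaches an iteration $t$ with $\Delta_t\leq\eta$.

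Next we check that $(x^t,\psi^t)$ is a genuine $(\rho,\Delta_t)$-RBCer for LM-PBM, as required by Definition~\ref{def:rbcer}. This needs three facts:
\begin{itemize}
\item $\psi^t\leq f$ on $X$;
\item $\psi^t$ is finite, convex and lower semicontinuous;
\item the quantity $\Delta_t$ computed in Algorithm~\ref{alg:lm-one-step} equals $\Delta_\rho(x^t;\psi^t)$.
\end{itemize}
The first two hold because $\psi^t$ is a finite maximum of cutting planes and aggregate planes, each a minorant of $f$ by \eqref{eq:subgrad-cut} and the normal-cone argument preceding \eqref{eq:aggregate-preservation}. The third holds because $y^t$ is the exact minimizer of the proximal subproblem, so the maximum in \eqref{eq:rbcer} is attained at $y^t$. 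Center exactness $\psi^t(x^t)=f(x^t)$ also holds, since every update includes $\ell_f(\cdot;x^{t+1})$ or keeps $\psi^t$ with unchanged center. This is not needed for the certificate itself, only for $\Delta_t\geq0$.

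We then apply Proposition~\ref{prop:rbcer-objective-gap} with $\widehat\mu=\mu^*$, which gives $f(x^t)-f^*\leq\max\{2,4\rho/\mu^*\}\eta=\vep$. Finally, $f(x^0)-f^*>\vep$, and null steps do not move the center. So the first center attaining $\vep$-optimality was created by a serious step, and it is produced no later than iteration $t$. This gives \eqref{eq:LM-exact}.

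The main obstacle is entirely inside Theorem~\ref{thm:lm-rbcer-complexity}, which we take as given. With that theorem assumed, the only delicate points are the exact matching $\widehat P_\eta=P_\vep$, which needs the identity $2\rho\eta=\min\{\rho,\mu^*/2\}\vep$, and confirming the minorant property under aggregation. For the latter I would cite the displayed chain $a^t\leq\sum_j\lambda^t_j\ell_j\leq\psi^t\leq f$.
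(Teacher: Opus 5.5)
Your proposal is correct and follows the paper's proof essentially verbatim. It uses the same choice $\eta=\vep/\max\{2,4\rho/\mu^*\}$, the identifications $\widehat S_\eta=S_\vep$ and $\widehat P_\eta=P_\vep$ via $2\rho\eta=\min\{\rho,\mu^*/2\}\vep$, an appeal to Theorem~\ref{thm:lm-rbcer-complexity}, then Proposition~\ref{prop:rbcer-objective-gap} with $\widehat\mu=\mu^*$, and the observation that the first $\vep$-optimal center comes from a serious step. Your extra check that $(x^t,\psi^t)$ is an RBCer is what the paper delegates to Lemma~\ref{lem:aggregate}; as a minor aside, $\Delta_t\geq0$ already follows from $\psi^t\leq f$, so center exactness is not needed there either.
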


\begin{corollary}
\label{cor:lm-tuned}
Under the hypotheses of Theorem~\ref{thm:lm-objective-gap}, set
$\beta=1/2$ and choose
$\rho\in[\min\{\mu^*,L\},\max\{\mu^*,L\}]$.
Then Algorithm~\ref{alg:lm-pbm} reaches a serious center satisfying
$f(x^t)-f^*\leq\vep$ after at most
\begin{equation}\label{eq:LM-interval-bigO}
 T_\vep^{\text{LM-PBM}}
 =O\!\Bigg(B\left(1+\frac{L}{\mu^*}\right)
 \left[1+\log\frac{f(x^0)-f^*}{\vep\min\{1,\mu^*/\rho\}}\right]
 +B\left(1+\frac{L}{\rho}\right)
 \log\frac{2\norm{g(x^0)}^2}{\min\{\rho,\mu^*\}\vep}\Bigg)
\end{equation}
first-order oracle calls.  In particular:
\begin{enumerate}[label=(\roman*)]
\item If $\rho=\mu^*$, then $\gamma_{\mu^*}=7/8$ and
$\theta_{\mu^*}=L/(L+\mu^*/2)$, and the bound becomes
\begin{equation}\label{eq:LM-bigO}
 T_\vep^{\text{LM-PBM}}
 =O\!\left(
 B\left(1+\frac{L}{\mu^*}\right)
 \left[1+\log\frac{f(x^0)-f^*}{\vep}
 +\log\frac{2\norm{g(x^0)}^2}{\mu^*\vep}\right]
 \right).
\end{equation}
\item If $\rho=L$, then $\theta_L=2/3$ and
$\gamma_L=1-1/(2\max\{2,4L/\mu^*\})$, and the bound becomes
\begin{equation}\label{eq:LM-L-bigO}
 T_\vep^{\text{LM-PBM}}
 =O\!\Bigg(B\left(1+\frac{L}{\mu^*}\right)
 \left[1+\log\frac{(1+L/\mu^*)(f(x^0)-f^*)}{\vep}\right]
 +B\log\frac{2\norm{g(x^0)}^2}{\min\{L,\mu^*\}\vep}\Bigg).
\end{equation}
\end{enumerate}
\end{corollary}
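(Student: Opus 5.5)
The corollary should follow from Theorem~\ref{thm:lm-objective-gap} by the same arithmetic used in the proof of Corollary~\ref{cor:full-memory-tuned}. The only change is that the prefactor $k$ becomes $B$. Theorem~\ref{thm:lm-objective-gap} gives $T_\vep^{\text{LM-PBM}}=O(B(S_\vep+P_\vep))$, where $S_\vep$ and $P_\vep$ are the quantities defined in~\eqref{eq:epsilon-counts}. These depend only on $\rho,\beta,\mu^*,L,f(x^0)-f^*$ and $\norm{g(x^0)}$, and not on $k$ or $B$. So it suffices to bound $S_\vep$ and $P_\vep$ exactly as in the full-memory case and then multiply by $B$.

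\textbf{Contraction rates.} Set $\beta=1/2$. From $-\log(1-u)\geq u$ and $\log(1+u)\geq u/(1+u)$ we get
\[
1/\log(1/\gamma_\rho)\leq 2\max\{2,4\rho/\mu^*\},\qquad 1/\log(1/\theta_\rho)\leq 1+2L/\rho .
\]

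\textbf{Bounding $S_\vep$.} Since $\beta\min\{1/2,\mu^*/(4\rho)\}=\bigl(2\max\{2,4\rho/\mu^*\}\bigr)^{-1}$, we obtain
\[
S_\vep=O\Bigl(\max\{1,\rho/\mu^*\}\bigl[1+\log\tfrac{f(x^0)-f^*}{\vep\min\{1,\mu^*/\rho\}}\bigr]\Bigr).
\]

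\textbf{Bounding $P_\vep$.} Since $3-2\beta=2$, we have $\log\bigl(2^{S_\vep}\max\{1,\cdot\}\bigr)=S_\vep\log 2+\log_+(\cdot)$. Convexity and QG give $\norm{g(x^0)}^2\geq \mu^*(f(x^0)-f^*)/2$. Combined with $\vep<f(x^0)-f^*$, this makes the argument of the maximum in $P_\vep$ at least of order one after adjusting constants, so $\log_+$ can be replaced by $\log\bigl(2\norm{g(x^0)}^2/(\min\{\rho,\mu^*\}\vep)\bigr)$. Using $\min\{\rho,\mu^*/2\}\geq\min\{\rho,\mu^*\}/2$, this yields
\[
P_\vep=O\Bigl((1+L/\rho)\bigl[S_\vep+\log\tfrac{2\norm{g(x^0)}^2}{\min\{\rho,\mu^*\}\vep}\bigr]\Bigr).
\]

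\textbf{Combining.} The main step is controlling the product $(1+L/\rho)S_\vep$, which contains $\max\{1,\rho/\mu^*\}(1+L/\rho)$. For $\rho$ in the interval $[\min\{\mu^*,L\},\max\{\mu^*,L\}]$ this product is at most $2(1+L/\mu^*)$. Check the cases $\rho\leq\mu^*$ and $\rho\geq\mu^*$ separately. When $\rho\leq\mu^*$, the interval forces $\rho\geq L$ or $\rho\geq\mu^*$, so $L/\rho\leq\max\{1,L/\mu^*\}$. When $\rho\geq\mu^*$, the product equals $\rho/\mu^*+L/\mu^*$, and $\rho\leq\max\{\mu^*,L\}$. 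Also $S_\vep\leq(1+L/\rho)S_\vep$, so the standalone $S_\vep$ term is absorbed. Substituting all of this into $O(B(S_\vep+P_\vep))$ gives~\eqref{eq:LM-interval-bigO}.

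\textbf{Special cases.} For $\rho=\mu^*$ we get $\gamma_{\mu^*}=1-\tfrac12\cdot\tfrac14=7/8$, $\theta_{\mu^*}=L/(L+\mu^*/2)$ and $\min\{1,\mu^*/\rho\}=1$, which gives~\eqref{eq:LM-bigO}. For $\rho=L$ we get $\theta_L=L/(L+L/2)=2/3$ and $1+L/\rho=2$. Also $1/\min\{1,\mu^*/L\}\leq 1+L/\mu^*$, which gives~\eqref{eq:LM-L-bigO}. I expect no real obstacle, since all of this is routine substitution.
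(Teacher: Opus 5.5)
Your proposal is correct and follows the paper's proof: the paper likewise bounds $B(S_\vep+P_\vep)$ from Theorem~\ref{thm:lm-objective-gap} by reusing the $S_\vep$ and $P_\vep$ estimates from the proof of Corollary~\ref{cor:full-memory-tuned} with $B$ in place of $k$, and then specializes to $\rho=\mu^*$ and $\rho=L$. Your case split showing $\max\{1,\rho/\mu^*\}(1+L/\rho)\leq 2(1+L/\mu^*)$ on the interval spells out a step the paper only asserts.
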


We make two remarks regarding
Theorems~\ref{thm:lm-rbcer-complexity} and~\ref{thm:lm-objective-gap} and
Corollary~\ref{cor:lm-tuned}.
First, the factor $B$ reflects that the contraction is guaranteed after
$B+1$ consecutive null iterations starting with $b=0$.  Choosing $B=k$ is the
smallest value for which the pigeonhole argument guarantees two trial points
in a common piece, and the method then stores at most $k+3$ cutting planes.  Any conservative choice
$B>k$ remains valid, but both the storage and the first-order oracle bound
increase linearly in $B$.

Second, $\rho$ affects serious- and null-step progress as in
Section~\ref{sec:global_complexity}: values that are too large or too small
can worsen the complexity bound.  Every choice
$\rho\in[\min\{\mu^*,L\},\max\{\mu^*,L\}]$ in
Corollary~\ref{cor:lm-tuned} yields an
$O(B(1+L/\mu^*))$ prefactor, up to logarithmic terms.

\subsection{Detailed proofs}
\label{subsec:limited-memory-detailed-proofs}

Lemma~\ref{lem:aggregate} establishes model validity, center exactness, and
the aggregate-plane estimates.  Lemma~\ref{lem:lm-serious-contract} gives
the serious-step estimates, and Proposition~\ref{prop:lm-transition}
establishes null-step monotonicity
and contraction over $B+1$ consecutive null steps starting with $b=0$.
Combining these estimates yields the complexity bounds through the same
counting argument used for FM-PBM.

\begin{lem}\label{lem:aggregate}
Fix $\rho>0$, $\beta\in(0,1)$, and $B\in\mbb N_+$.
Start repeated calls to $\onestep$ with $b=0$ and a finite maximum of
affine functions $\psi^0$ satisfying $\psi^0(x)\leq f(x)$ for every $x\in X$
and $\psi^0(x^0)=f(x^0)$.  The following hold for every $t$ and $x\in X$.
\begin{enumerate}[label=(\roman*),leftmargin=*]
\item Each model $\psi^t$ is a finite maximum of affine functions and satisfies
\begin{equation}\label{eq:limited-memory-minorancy}
 \psi^t(x)\leq f(x),\qquad \psi^t(x^t)=f(x^t).
\end{equation}
\item The aggregate plane $a^t$ defined by~\eqref{eq:aggregate} satisfies
\eqref{eq:aggregate-preservation} and
\begin{align}
 a^t(x)&\leq\psi^t(x),\label{eq:agg-valid}\\
 a^t(x)+\frac{\rho}{2}\norm{x-y^t}^2
 &\geq\psi^t(y^t)-\frac{\rho}{2}\norm{x^t-y^t}^2.
 \label{eq:agg-identity-new-center}
\end{align}
\end{enumerate}
\end{lem}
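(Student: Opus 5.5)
I plan to prove (i) by induction on $t$, and to prove (ii) for each fixed $t$ using only the structure of $\psi^t$ given by (i).

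For (i), the base case is the hypothesis on $\psi^0$. For the inductive step, assume $\psi^t$ is a finite maximum of affine functions with $\psi^t\leq f$ on $X$ and $\psi^t(x^t)=f(x^t)$. There are two kinds of update.

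\emph{Append update} (line~\ref{line:lm-append}). Here $\psi^{t+1}=\max\{\psi^t,\ell_f(\cdot;y^t)\}$ is again a finite maximum of affine functions. It is a minorant by~\eqref{eq:subgrad-cut}. This branch occurs only at a null step, so $x^{t+1}=x^t$. Then
\[
f(x^t)=\psi^t(x^t)\leq\psi^{t+1}(x^t)\leq f(x^t),
\]
which gives exactness at $x^{t+1}$.

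\emph{Compress update} (line~\ref{line:lm-compress}). Here $\psi^{t+1}=\max\{a^t,\ell_f(\cdot;x^{t+1}),\ell_f(\cdot;y^t)\}$ is a maximum of three affine functions. The two linearizations are minorants by~\eqref{eq:subgrad-cut}. For $a^t$ I use $a^t\leq\psi^t\leq f$, which is~\eqref{eq:agg-valid} from part (ii) applied at index $t$; part (ii) depends only on the inductive hypothesis for $\psi^t$, so it can be proved first at each $t$. Exactness follows because $\ell_f(x^{t+1};x^{t+1})=f(x^{t+1})$ and every piece is $\leq f$.

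For (ii), fix $t$ and assume $\psi^t$ is a finite convex piecewise-affine minorant. Then $y^t$ exists uniquely, and the optimality condition~\eqref{eq:prox-optimality} gives $v^t\in\partial\psi^t(y^t)$ and $n^t\in N_X(y^t)$ with $v^t=\rho(x^t-y^t)-n^t$.
\begin{itemize}
\item[\textbullet] \emph{Validity~\eqref{eq:agg-valid}.} For $x\in X$, the subgradient inequality gives $\psi^t(x)\geq\psi^t(y^t)+\inner{v^t}{x-y^t}$. The normal-cone inequality gives $-\inner{n^t}{x-y^t}\geq0$. Together, $\psi^t(x)\geq\psi^t(y^t)+\rho\inner{x^t-y^t}{x-y^t}=a^t(x)$. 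This avoids the minimax/KKT discussion and uses the same argument as in the proof of Lemma~\ref{lem:serious-contract}.
\item[\textbullet] \emph{Identity~\eqref{eq:aggregate-preservation}.} Expand $\norm{x-x^t}^2=\norm{(x-y^t)+(y^t-x^t)}^2$. The cross term $\rho\inner{y^t-x^t}{x-y^t}$ cancels the linear part of $a^t$, which is pure algebra.
\item[\textbullet] \emph{Bound~\eqref{eq:agg-identity-new-center}.} Write $a^t(x)+\frac{\rho}{2}\norm{x-y^t}^2=\psi^t(y^t)+\rho\inner{x^t-y^t}{x-y^t}+\frac{\rho}{2}\norm{x-y^t}^2$. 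The right-hand side equals $\psi^t(y^t)+\frac{\rho}{2}\norm{x-y^t+(x^t-y^t)}^2-\frac{\rho}{2}\norm{x^t-y^t}^2$, which is at least $\psi^t(y^t)-\frac{\rho}{2}\norm{x^t-y^t}^2$. Equivalently, minimize the quadratic in $x$ over all of $\mbb R^n$.
\end{itemize}

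The only delicate point is the ordering in the induction. Validity of $a^t$ is needed to certify $\psi^{t+1}\leq f$ in the compression branch. I will therefore state the induction as "(i) at $t$ $\Rightarrow$ (ii) at $t$ $\Rightarrow$ (i) at $t+1$." I also need to check that at a serious step the exactness uses $x^{t+1}=y^t$, so the cut $\ell_f(\cdot;x^{t+1})$ is present. Otherwise everything is routine.
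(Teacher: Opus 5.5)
Your proposal is correct and follows essentially the same route as the paper. Both argue by induction on $t$, obtain $a^t\leq\psi^t\leq f$ inside the inductive step from the proximal optimality condition (subgradient plus normal-cone inequality), and derive \eqref{eq:aggregate-preservation} and \eqref{eq:agg-identity-new-center} by completing the square; your separate check for serious steps is unnecessary, since the compression branch always contains $\ell_f(\cdot;x^{t+1})$ and so exactness holds regardless of the step type.
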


\begin{proof}
{(i)} We use induction.  The claim holds at $t=0$ by hypothesis.
Suppose it holds at iteration $t$.  Proximal optimality gives
$v^t\in\partial\psi^t(y^t)$ and $n^t\in N_X(y^t)$ such that
$v^t+n^t=\rho(x^t-y^t)$.  Since $\inner{n^t}{x-y^t}\leq0$ for $x\in X$,
\[
\begin{aligned}
 \psi^t(x)&\geq\psi^t(y^t)+\inner{v^t}{x-y^t}\\
 &\geq\psi^t(y^t)+\rho\inner{x^t-y^t}{x-y^t}=a^t(x).
\end{aligned}
\]
Thus $a^t(x)\leq f(x)$ for every $x\in X$ by the induction hypothesis.
Together with the cutting-plane inequality~\eqref{eq:subgrad-cut}, this
shows that both updates produce finite maxima of affine global minorants.
An ordinary null update retains $\psi^t$ and the center $x^t$.
A compression retains $\ell_f(\cdot;x^{t+1})$, whose value at $x^{t+1}$ is
$f(x^{t+1})$.  Hence both updates preserve center exactness, proving (i).

{(ii)} For every $x\in X$, substituting~\eqref{eq:aggregate} gives
\[
\begin{aligned}
 a^t(x)+\frac{\rho}{2}\norm{x-x^t}^2
 &=\psi^t(y^t)+\rho\inner{x^t-y^t}{x-y^t}
   +\frac{\rho}{2}\norm{x-x^t}^2\\
 &=\psi^t(y^t)+\frac{\rho}{2}\norm{y^t-x^t}^2
   +\frac{\rho}{2}\norm{x-y^t}^2,
\end{aligned}
\]
which proves~\eqref{eq:aggregate-preservation}.
The subgradient inequality in part (i) gives~\eqref{eq:agg-valid}.
Finally,
\[
\begin{aligned}
 a^t(x)+\frac{\rho}{2}\norm{x-y^t}^2
 &=\psi^t(y^t)-\frac{\rho}{2}\norm{x^t-y^t}^2
   +\frac{\rho}{2}\norm{x+x^t-2y^t}^2\\
 &\geq\psi^t(y^t)-\frac{\rho}{2}\norm{x^t-y^t}^2,
\end{aligned}
\]
which proves~\eqref{eq:agg-identity-new-center}.
\end{proof}

\begin{lem}\label{lem:lm-serious-contract}
Suppose the calls to $\onestep$ are initialized as in
Lemma~\ref{lem:aggregate}, and let $\tau_s$ be the full iteration index of
the $s$-th serious step.  Whenever $\tau_{s+1}$ is defined,
\begin{equation}\label{eq:lm-serious-contract}
 f(x^{\tau_{s+1}})-f^*
 \leq\gamma_\rho\bigl(f(x^{\tau_s})-f^*\bigr),
 \qquad \gamma_\rho\in(0,1).
\end{equation}
Moreover, at every serious iteration $\tau_s$,
\begin{equation}\label{eq:center-change-lm}
 \Delta_{\tau_s+1}\leq(3-2\beta)\Delta_{\tau_s}.
\end{equation}
\end{lem}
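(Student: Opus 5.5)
The plan is to mirror the proof of Lemma~\ref{lem:serious-contract}, replacing the full-memory nesting $\psi^{t+1}\geq\psi^t$ with the aggregate-plane estimates of Lemma~\ref{lem:aggregate}.

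\textbf{Objective-gap contraction.} Fix $s$ and set $t=\tau_s$. By Lemma~\ref{lem:aggregate}(i), $\psi^t$ is a finite-valued convex minorant of $f$ that is exact at $x^t$. Strong convexity of the proximal objective, evaluated at $x=x^t$, therefore gives the bounds~\eqref{eq:proximal-bounds} exactly as before:
\[
\Delta_t\geq\tfrac{\rho}{2}\norm{y^t-x^t}^2,
\qquad
\Delta_t\leq f(x^t)-\psi^t(y^t)\leq 2\Delta_t .
\]
Since $\Delta_t=\Delta_\rho(x^t;\psi^t)$, the pair $(x^t,\psi^t)$ is a $(\rho,\Delta_t)$-RBCer. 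The serious-step test gives $f(y^t)\leq f(x^t)-\beta\Delta_t$. Proposition~\ref{prop:rbcer-objective-gap} with $\widehat\mu=\mu^*$ gives $\Delta_t\geq (f(x^t)-f^*)/\max\{2,4\rho/\mu^*\}$. Combining these yields $f(y^t)-f^*\leq\gamma_\rho(f(x^t)-f^*)$. Null steps do not move the center, so $x^{\tau_{s+1}}=y^t$, which is~\eqref{eq:lm-serious-contract}. The fact that $\gamma_\rho\in(0,1)$ is the same trivial check as before.

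\textbf{Bound on $\Delta_{t+1}$.} At a serious step the compression branch always fires, so $\psi^{t+1}=\max\{a^t,\ell_f(\cdot;y^t)\}\geq a^t$ and $x^{t+1}=y^t$. Estimate~\eqref{eq:agg-identity-new-center} gives, for every $x\in X$,
\[
\psi^{t+1}(x)+\tfrac{\rho}{2}\norm{x-y^t}^2\geq\psi^t(y^t)-\tfrac{\rho}{2}\norm{x^t-y^t}^2 .
\]
Minimizing over $x$ and using the definition of $\Delta_{t+1}$ with $f(x^{t+1})=f(y^t)$ gives
\[
\Delta_{t+1}\leq f(y^t)-\psi^t(y^t)+\tfrac{\rho}{2}\norm{y^t-x^t}^2 .
\]
The serious test rearranges to~\eqref{eq:serious-error}. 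Then
\[
\Delta_{t+1}\leq(1-\beta)\Delta_t+(2-\beta)\tfrac{\rho}{2}\norm{y^t-x^t}^2\leq(3-2\beta)\Delta_t ,
\]
where the equality step uses~\eqref{eq:D} and the last step uses~\eqref{eq:proximal-bounds}.

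The only genuinely new point, and the one I expect to need care, is the replacement of nesting by $\psi^{t+1}\geq a^t$. This requires confirming two things from the definition of $\onestep$: that a serious iteration always triggers compression regardless of $b$, and that the retained cut at $x^{t+1}$ coincides with the cut at $y^t$. Beyond that, the argument is a line-by-line transcription of the full-memory proof.
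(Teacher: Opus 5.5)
Your proposal is correct and follows the paper's proof essentially step for step. The objective-gap contraction comes from Lemma~\ref{lem:serious-contract}'s argument via the model validity and center exactness in Lemma~\ref{lem:aggregate}, and the $\Delta_{\tau_s+1}$ bound replaces model nesting with $\psi^{t+1}\geq a^t$ combined with \eqref{eq:agg-identity-new-center}. The two points you flag both hold by the definition of $\onestep$: compression fires at every serious iteration, and the cuts at $x^{t+1}$ and $y^t$ coincide.
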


\begin{proof}
The proof follows that of Lemma~\ref{lem:serious-contract}.
Lemma~\ref{lem:aggregate} supplies the same model validity and center
exactness, so the proximal bounds~\eqref{eq:proximal-bounds} and the
objective-gap contraction~\eqref{eq:lm-serious-contract} follow by the
same argument.

The difference is that compression retains the aggregate plane.
At each compression, line~\ref{line:lm-compress} gives
\begin{equation}\label{eq:retain-aggregate}
 \psi^{t+1}(x)\geq a^t(x),\qquad \forall x\in X.
\end{equation}
At a serious iteration $t=\tau_s$, we have $x^{t+1}=y^t$, so this bound
and~\eqref{eq:agg-identity-new-center} imply
\[
 \min_{x\in X}\left\{\psi^{t+1}(x)+\frac{\rho}{2}\norm{x-y^t}^2\right\}
 \geq \psi^t(y^t)-\frac{\rho}{2}\norm{y^t-x^t}^2.
\]
This recovers the lower bound used to prove~\eqref{eq:center-change}.
The serious-step test~\eqref{eq:serious-error}, together with
\eqref{eq:D} and~\eqref{eq:proximal-bounds}, then gives
\eqref{eq:center-change-lm}.
\end{proof}

\begin{proposition}
\label{prop:lm-transition}
Suppose $B\geq k$, $\rho>0$ is fixed, and the calls to $\onestep$ are
initialized as in Lemma~\ref{lem:aggregate}.
If iteration $t$ is null, then
\begin{equation}\label{eq:lm-null-monotonicity}
 \Delta_{t+1}\leq \Delta_t.
\end{equation}
Moreover, if $b=0$ at the start of iteration $u$ and iterations
$u,u+1,\ldots,u+B$ are all null, then
\begin{equation}\label{eq:lm-null-contraction}
 \Delta_{u+B+1}\leq\theta_{\rho}\Delta_u.
\end{equation}
\end{proposition}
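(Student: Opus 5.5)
We will mirror the proof of Proposition~\ref{prop:full-null-contraction}, replacing model nesting by the weaker lower bound $\psi^{r}\geq\psi^l$ or $\psi^{t+1}\geq a^t$, whichever applies.

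\textbf{Monotonicity.} Let iteration $t$ be null, so $x^{t+1}=x^t=:c$. We first show that in both update branches
\[
\psi^{t+1}(x)+\tfrac{\rho}{2}\norm{x-c}^2\geq\psi^t(y^t)+\tfrac{\rho}{2}\norm{y^t-c}^2+\tfrac{\rho}{2}\norm{x-y^t}^2,\qquad\forall x\in X .
\]
In an ordinary update (line~\ref{line:lm-append}) we have $\psi^{t+1}\geq\psi^t$. The bound then follows from $\rho$-strong convexity of $\psi^t+\frac{\rho}{2}\norm{\cdot-c}^2$, which is minimized at $y^t$. In a compression (line~\ref{line:lm-compress}) we have $\psi^{t+1}\geq a^t$, and the bound is exactly the identity \eqref{eq:aggregate-preservation} from Lemma~\ref{lem:aggregate}. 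Evaluating at $x=y^{t+1}$ and using $f(x^{t+1})=f(x^t)$ gives $\Delta_t-\Delta_{t+1}\geq\frac{\rho}{2}\norm{y^{t+1}-y^t}^2\geq0$, which is \eqref{eq:lm-null-monotonicity}.

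\textbf{Contraction.} Assume $b=0$ at iteration $u$ and iterations $u,\dots,u+B$ are all null. The counter then increases $b=0,1,\dots,B$ over iterations $u,\dots,u+B$. Iterations $u,\dots,u+B-1$ all use the append branch, so for $u\leq l<r\leq u+B$ we get $\psi^r\geq\psi^l$ pointwise and $\psi^r$ retains $\ell_f(\cdot;y^l)$. Compression can occur only at iteration $u+B$, and it affects only $\psi^{u+B+1}$. The full-memory bridge argument of \eqref{eq:full-bridge} therefore applies verbatim on the window $[l,r]$: the center is constant, models are nested, and strong convexity holds at $y^l$. This gives $\Delta_l-\Delta_r\geq\frac{\rho}{2}\norm{y^r-y^l}^2$.

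Since $B\geq k$, the $B+1\geq k+1$ trial points $y^u,\dots,y^{u+B}$ contain a pair $l<r$ in a common piece, by pigeonhole. For this pair, Assumption~\ref{ass:first-order-oracle} and the retained cut give $f(y^r)-\psi^r(y^r)\leq\frac{L}{2}\norm{y^r-y^l}^2$. Because iteration $r$ is null, the failed test together with \eqref{eq:proximal-bounds} (valid here by Lemma~\ref{lem:aggregate}(i)) gives $f(y^r)-\psi^r(y^r)>(1-\beta)\Delta_r$. Combining the three inequalities yields $(1-\beta)\Delta_r<\frac{L}{\rho}(\Delta_l-\Delta_r)$, that is, $\Delta_r<\theta_\rho\Delta_l$.

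Monotonicity across all null iterations, including the compression at $u+B$ (this is why the first part must cover both branches), then gives $\Delta_{u+B+1}\leq\Delta_r<\theta_\rho\Delta_l\leq\theta_\rho\Delta_u$.

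The main point requiring care is the bookkeeping of $b$. We must confirm that no compression happens strictly inside the window, so that $\ell_f(\cdot;y^l)$ survives in $\psi^r$. The step $u+B\to u+B+1$ needs only the aggregate bridge, not retention of that cut.
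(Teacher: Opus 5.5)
Your proposal is correct and follows essentially the same route as the paper. You obtain the proximal bridge from nesting plus strong convexity on append steps and from $\psi^{t+1}\geq a^t$ with \eqref{eq:aggregate-preservation} at the compression, then apply pigeonhole to the $B+1$ compression-free trial points, use the same-piece cut together with the null test, and chain monotonicity through the final compression at $u+B$.
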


\begin{proof}
First, suppose $l<r$, iterations $l,\ldots,r-1$ are null, and no compression
occurs at these iterations.  Their center is unchanged, so $x^l=x^r=:c$.
Repeated use of line~\ref{line:lm-append} gives
\begin{equation}\label{eq:null-model-retention}
 \psi^r(x)\geq\max\{\psi^l(x),\ell_f(x;y^l)\},\qquad \forall x\in X.
\end{equation}
The function $\psi^l(x)+\rho\norm{x-c}^2/2$ is $\rho$-strongly convex
and minimized at $y^l$.  Thus,
\begin{equation}\label{eq:lm-matching-bridge}
\begin{aligned}
 \Delta_l-\Delta_r
 &=\psi^r(y^r)+\frac{\rho}{2}\norm{y^r-c}^2
   -\psi^l(y^l)-\frac{\rho}{2}\norm{y^l-c}^2\\
 &\geq\psi^l(y^r)+\frac{\rho}{2}\norm{y^r-c}^2
   -\psi^l(y^l)-\frac{\rho}{2}\norm{y^l-c}^2\\
 &\geq\frac{\rho}{2}\norm{y^r-y^l}^2.
\end{aligned}
\end{equation}
Here the first inequality uses~\eqref{eq:null-model-retention}, and the
second uses strong convexity at $y^l$.

Now consider a null iteration $t$ and set $c=x^t=x^{t+1}$.
If $b<B$, take $l=t$ and $r=t+1$ in~\eqref{eq:lm-matching-bridge}.
If $b=B$, the compression bound~\eqref{eq:retain-aggregate} and the
identity~\eqref{eq:aggregate-preservation} give
\[
\begin{aligned}
 \psi^{t+1}(y^{t+1})+\frac{\rho}{2}\norm{y^{t+1}-c}^2
 &\geq a^t(y^{t+1})+\frac{\rho}{2}\norm{y^{t+1}-c}^2\\
 &=\psi^t(y^t)+\frac{\rho}{2}\norm{y^t-c}^2
   +\frac{\rho}{2}\norm{y^{t+1}-y^t}^2.
\end{aligned}
\]
Subtracting the minimized model values therefore gives, in both cases,
\begin{equation}\label{eq:finite-bridge}
 \Delta_t-\Delta_{t+1}\geq\frac{\rho}{2}\norm{y^{t+1}-y^t}^2\geq0.
\end{equation}
This proves~\eqref{eq:lm-null-monotonicity}.

Second, suppose $l<r$, iterations $l,\ldots,r$ are null, no compression occurs at
$l,\ldots,r-1$, and $y^l,y^r$ lie in a common piece.
The null-step test,~\eqref{eq:null-model-retention}, and~\eqref{eq:pws} give
\[
\begin{aligned}
 (1-\beta)\Delta_r
 &<f(y^r)-\psi^r(y^r)\\
 &\leq f(y^r)-\ell_f(y^r;y^l)
 \leq\frac{L}{2}\norm{y^r-y^l}^2
 \leq\frac{L}{\rho}(\Delta_l-\Delta_r),
\end{aligned}
\]
where the last inequality uses~\eqref{eq:lm-matching-bridge}.
Rearranging yields
\begin{equation}\label{eq:matching-finite}
 \Delta_r<\theta_{\rho}\Delta_l.
\end{equation}

Third, suppose $b=0$ at the start of iteration $u$ and iterations
$u,\ldots,u+B$ are all null.  No compression occurs at $u,\ldots,u+B-1$.
Since $B+1\geq k+1$, the pigeonhole principle gives
$u\leq l<r\leq u+B$ such that $y^l,y^r$ lie in a common piece.
Thus~\eqref{eq:matching-finite} applies.  Monotonicity~\eqref{eq:lm-null-monotonicity}
also applies to all null updates, including the final compression, so
\[
 \Delta_{u+B+1}\leq \Delta_r<\theta_{\rho}\Delta_l
 \leq\theta_{\rho}\Delta_u.
\]
\end{proof}

\begin{proof}[\underline{Proof of Theorem~\ref{thm:lm-rbcer-complexity}}]
The initialization in line~\ref{line:lm-initialize} uses the same cutting
plane as FM-PBM, so~\eqref{eq:initial-certificate-bound} holds.
Lemma~\ref{lem:lm-serious-contract} gives the same two serious-step
estimates as Lemma~\ref{lem:serious-contract}.

Each sequence of consecutive null steps starts with $b=0$, and every
$B+1$ null updates reset $b$ to zero.  Thus,
\eqref{eq:lm-null-contraction} applies to each complete group of $B+1$
null steps, while~\eqref{eq:lm-null-monotonicity} applies to the remaining
steps.  The counting argument in the proof of
Theorem~\ref{thm:rbcer-complexity} therefore applies with $B$ in place of
$k$: before finding the first iteration $t$ satisfying $\Delta_t\leq\eta$,
the numbers $S$ of serious steps and $P$ of complete groups satisfy
$S\leq\widehat S_\eta$ and $P\leq\widehat P_\eta$.
The $S+1$ sequences of null steps each contain at most $B$ remaining
steps, so the number of first-order oracle calls is
\[
 O\bigl(1+S+(B+1)P+B(S+1)\bigr)
 =O\bigl(B(\widehat S_\eta+\widehat P_\eta)\bigr).
\]
Lemma~\ref{lem:aggregate} ensures that the resulting pair is a
$(\rho,\eta)$-RBCer, proving~\eqref{eq:LM-computable-stop}.
\end{proof}

\begin{proof}[\underline{Proof of Theorem~\ref{thm:lm-objective-gap}}]
Set $\eta=\vep/\max\{2,4\rho/\mu^*\}$.  As in the proof of
Theorem~\ref{thm:main}, we have
$0<\eta<(f(x^0)-f^*)/\beta$,
$\widehat S_\eta=S_\vep$, and $\widehat P_\eta=P_\vep$.
Theorem~\ref{thm:lm-rbcer-complexity} therefore produces a
$(\rho,\eta)$-RBCer $(x^t,\psi^t)$ within
$O(B(S_\vep+P_\vep))$ first-order oracle calls.
Proposition~\ref{prop:rbcer-objective-gap}, applied with
$\widehat\mu=\mu^*$, gives
\[
 f(x^t)-f^*\leq\max\{2,4\rho/\mu^*\}\eta=\vep.
\]
Since $f(x^0)-f^*>\vep$ and null steps leave the center unchanged, the
first such center is produced by a serious step.  This proves~\eqref{eq:LM-exact}.
\end{proof}

\begin{proof}[\underline{Proof of Corollary~\ref{cor:lm-tuned}}]
Applying the estimates for $S_\vep$ and $P_\vep$ from the proof of
Corollary~\ref{cor:full-memory-tuned} to~\eqref{eq:LM-exact}
gives~\eqref{eq:LM-interval-bigO}, with $B$ replacing $k$ in
\eqref{eq:FM-interval-bigO}.  Setting $\rho=\mu^*$ and $\rho=L$
gives~\eqref{eq:LM-bigO} and~\eqref{eq:LM-L-bigO}, respectively.
\end{proof}

\section{Restarted LM-PBM}
\label{sec:restarted-lm-pbm}

In Corollaries~\ref{cor:full-memory-tuned} and~\ref{cor:lm-tuned},
the choices $\rho=\mu^*$ and $\rho=L$ require knowledge of the QG modulus
and the piecewise smoothness constant, respectively.
In practice, these constants can be difficult to estimate, complicating the
choice of a suitable proximal parameter $\rho$ for LM-PBM.

To avoid requiring either $\mu^*$ or $L$, we introduce restarted LM-PBM (rLM-PBM), which
uses RBCers to adjust the proximal parameter based on the guess-and-check
principle of restarted APEX~\cite{partOne}.
Subsection~\ref{subsec:rlm-algorithm} presents the algorithm and its key
idea, Subsection~\ref{subsec:rlm-guarantees} states its convergence
guarantees, and Subsection~\ref{subsec:rlm-detailed-proofs} contains the
detailed proofs.

\subsection{The algorithm and key idea}
\label{subsec:rlm-algorithm}
Our key idea is to track the unknown QG modulus $\mu^*$ through a
guess-and-check scheme.
At stage $s$, rLM-PBM uses a fixed trial proximal parameter
$\widehat\rho_s>0$ and maintains a $(\widehat\rho_s,\bar\Delta)$-RBCer
$(\bar x,\bar\psi)$.
The model $\bar\psi$ is a finite convex global minorant exact at $\bar x$,
and its certificate value satisfies
$\Delta_{\widehat\rho_s}(\bar x;\bar\psi)\leq\bar\Delta$.
If $\widehat\rho_s\leq\mu^*$, Proposition~\ref{prop:rbcer-objective-gap}
gives $f(\bar x)-f^*\leq4\bar\Delta$, so
\begin{equation}\label{eq:record-lower-bound}
 \underline f:=f(\bar x)-4\bar\Delta
\end{equation}
is a valid lower bound on $f^*$.  Thus, $\bar f<\underline f$ implies
$\widehat\rho_s>\mu^*$, where $\bar f$ is the best objective value evaluated
so far.

At stage $s$, Algorithm~\ref{alg:restarted-lm-pbm} initializes
$(x^0,\psi^0,b)=(\bar x,\bar\psi,0)$ and calls $\onestep$ with
$\rho=\widehat\rho_s$ until $\bar f<\underline f$ or $\Delta_t\leq\bar\Delta/2$.
If the lower bound is not violated and $\Delta_t\leq\bar\Delta/2$, then
\begin{equation}\label{eq:record-product-contraction}
 4\widehat\rho_s\Delta_t\leq\frac{1}{2}(4\widehat\rho_s\bar\Delta).
\end{equation}
The algorithm therefore reduces the product $4\widehat\rho_s\bar\Delta$
by at least half when it installs
\begin{equation}\label{eq:rlm-record-compression}
 \bar x\gets x^t,\qquad
 \bar\psi(x)\gets
 \max\{a^t(x),\ell_f(x;x^t),\ell_f(x;y^t)\},\qquad
 \bar\Delta\gets \Delta_t.
\end{equation}
The plane $a^t$ is computed from cached quantities using~\eqref{eq:aggregate},
even if $\onestep$ did not compress; no additional oracle call is needed.
The compressed model is a global minorant exact at $\bar x$, and
\eqref{eq:aggregate-preservation} gives
\[
 \Delta_{\widehat\rho_s}(\bar x;\bar\psi)
 \leq \Delta_{\widehat\rho_s}(x^t;a^t)=\Delta_t=\bar\Delta.
\]

At the end of either type of stage, the algorithm retains $(\bar x,\bar\psi)$
and chooses the smallest integer $i\geq0$ satisfying
$f(\bar x)-4\cdot2^i\bar\Delta\leq\bar f$.  Lemma~\ref{lem:rbcer-scaling}
establishes the scaling relation
\begin{equation}\label{eq:restart-scaling}
 \widehat\rho_{s+1}\Delta_{\widehat\rho_{s+1}}(\bar x;\bar\psi)
 \leq\widehat\rho_s\bar\Delta,
 \qquad \widehat\rho_{s+1}=\frac{\widehat\rho_s}{2^i},
\end{equation}
shows that rescaling $\bar\Delta\gets2^i\bar\Delta$ preserves certificate validity and
the product $4\widehat\rho_s\bar\Delta$.  Updating $\underline f$ by
\eqref{eq:record-lower-bound} then gives $\bar f\geq\underline f$ for the
next stage.

Each stage starts with at most three planes and stores at most $B+3$
planes during its LM-PBM iterations.  The updates are defined for any
$\widehat\rho_1>0$ and $B\in\mbb N_+$.  The complexity guarantees below
assume $\widehat\rho_1\geq\mu^*$ and $B\geq k$.

\begin{algorithm}[H]
\caption{Restarted LM-PBM (rLM-PBM)}
\label{alg:restarted-lm-pbm}
\begin{algorithmic}[1]
\State \textbf{Input:} $x^0\in X$, $\widehat\rho_1>0$, $\beta\in(0,1)$,
and bundle size $B\geq k$
\State \textbf{Initialize:} $\bar x\gets x^0$ and
$\bar\psi\gets\ell_f(\cdot;x^0)$
\State Compute $\bar\Delta\gets \Delta_{\widehat\rho_1}(\bar x;\bar\psi)$ and set
$\bar f\gets f(x^0)$ and $\underline f\gets f(\bar x)-4\bar\Delta$\label{line:rlm-gap}
\For{$s=1,2,\ldots$}
  \State $(x^0,\psi^0,b)\gets(\bar x,\bar\psi,0)$
  \label{line:rlm-run}
  \For{$t=0,1,2,\ldots$}
    \State $(x^{t+1},\psi^{t+1},y^t,b,\Delta_t,\bar f)\gets
    \onestep(x^t,\psi^t,\widehat\rho_s,\bar f,b,\beta,B)$
    \State{\textbf{If} $\bar f<\underline f$, \textbf{exit the inner loop}}
    \Comment{the conditional lower bound is violated}
    \State{\textbf{If} $\Delta_t\leq\bar\Delta/2$: update
    $(\bar x,\bar\psi,\bar\Delta)$ by~\eqref{eq:rlm-record-compression} and
    \textbf{exit the inner loop}} \Comment{RBCer with reduced $\Delta_t$}
  \EndFor
  \State Let $i\geq0$ be the smallest integer satisfying
  $f(\bar x)-4\cdot2^i\bar\Delta\leq\bar f$
  \State $\widehat\rho_{s+1}\gets\widehat\rho_s/2^i$,
  $\bar\Delta\gets2^i\bar\Delta$, and
  $\underline f\gets f(\bar x)-4\bar\Delta$ \label{line:rlm-reduce}
\EndFor
\end{algorithmic}
\end{algorithm}

\subsection{Convergence guarantees}
\label{subsec:rlm-guarantees}

We next state the complexity guarantees for
Algorithm~\ref{alg:restarted-lm-pbm}.  The first theorem bounds the cost of
generating an RBCer, while the second converts this certificate into an
objective-gap guarantee.

\begin{thm}
\label{thm:restarted-rbcer-complexity}
Let $B\geq k$ and $\widehat\rho_1\geq\mu^*$.  For every
$\eta>0$, Algorithm~\ref{alg:restarted-lm-pbm} produces a
$(\widehat\rho_s,\eta)$-RBCer for some stage $s$ after at most
\begin{equation}\label{eq:restarted-rbcer-complexity}
 O\left(
 B\left(1+\frac{L}{\mu^*}\right)
 \left[1+\log_+\frac{\norm{g(x^0)}^2}{\mu^*\eta}\right]
 \right)
\end{equation}
first-order oracle calls. 
\end{thm}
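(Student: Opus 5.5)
The plan is to track the product $\Phi:=\widehat\rho_s\bar\Delta$ as a potential across stages and inner iterations, and to charge every oracle call either to a halving of $\Phi$ or to a bounded amount of work inside a stage. The argument proceeds in four steps.

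\textbf{Step 1: invariants for $\widehat\rho_s$ and $\Phi$.} First I establish two invariants. The first is $\widehat\rho_s\geq\mu^*/2$ for every stage. The second is that $\Phi$ never increases and is halved whenever a stage ends through the branch $\Delta_t\leq\bar\Delta/2$.

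For the first invariant, $\widehat\rho_s$ decreases only at line~\ref{line:rlm-reduce}. The smallest $i$ there is determined by the violation test. If $\widehat\rho_s/2^{i-1}\leq\mu^*$, then the rescaled certificate at index $i-1$ is valid by Lemma~\ref{lem:rbcer-scaling}, and Proposition~\ref{prop:rbcer-objective-gap} with $\widehat\mu=\widehat\rho_s/2^{i-1}$ makes $f(\bar x)-4\cdot2^{i-1}\bar\Delta$ a true lower bound on $f^*\leq\bar f$. That contradicts the minimality of $i$. Hence $\widehat\rho_{s+1}>\mu^*/2$ whenever $i\geq1$, and $\widehat\rho_1\geq\mu^*$ starts the induction.

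For the second invariant, \eqref{eq:restart-scaling} shows the product is preserved by rescaling, and \eqref{eq:record-product-contraction} shows it halves at a successful stage.

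\textbf{Step 2: bound the number of stages.} Initially $\Phi_1\leq\norm{g(x^0)}^2/2$ by \eqref{eq:initial-certificate-bound}. The target is reached once $\bar\Delta\leq\eta$, and since $\widehat\rho\geq\mu^*/2$ it suffices that $\Phi\leq\mu^*\eta/2$. This gives $O(\log_+(\norm{g(x^0)}^2/(\mu^*\eta)))$ successful stages.

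Violation stages strictly decrease $\widehat\rho$, by Step 1 with $i\geq1$. I have to check that $i\geq1$ actually holds after a violation: $\bar f<\underline f=f(\bar x)-4\bar\Delta$ means $i=0$ fails. Because $\widehat\rho\in[\mu^*/2,\widehat\rho_1]$, there are at most $\log_2(2\widehat\rho_1/\mu^*)$ such halvings. I would hope to absorb this term; if it cannot be absorbed, I would note that a violation at a large $\widehat\rho$ forces $\bar f$ below the lower bound, which costs extra. Otherwise I would bound $\widehat\rho_1$ by the initialization convention of this subsection, presumably taking $\widehat\rho_1$ comparable to $\norm{g(x^0)}^2/\bar\Delta$-type quantities so that the term merges into the logarithm.

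\textbf{Step 3: bound the work within a stage.} This is the main obstacle, because Proposition~\ref{prop:lm-transition} and Lemma~\ref{lem:lm-serious-contract} give rates $\theta_{\widehat\rho}$ and $\gamma_{\widehat\rho}$ that depend on $\widehat\rho$ relative to $\mu^*$ and $L$. The difficulty is that $\widehat\rho$ is at least $\mu^*/2$ but could be much larger than $\mu^*$.

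If no violation occurs, the certificate bound with $\widehat\mu=\widehat\rho_s$ is assumed to hold along the way. I would replay the counting of Theorem~\ref{thm:lm-rbcer-complexity} with the contraction factor $\gamma$ built from $\widehat\mu=\min\{\widehat\rho_s,\mu^*\}$. Serious steps contract the gap, null groups contract $\Delta$ by $\theta_{\widehat\rho_s}$, and each serious step inflates $\Delta$ by at most $3-2\beta$. Reaching $\Delta_t\leq\bar\Delta/2$ from $\Delta_0\leq\bar\Delta$ then takes $O(B(1+\widehat\rho_s/\mu^*)(1+L/\widehat\rho_s))$ calls per stage. The gap at the stage start is at most $4\bar\Delta$ if $\widehat\rho_s\leq\mu^*$; otherwise I use \eqref{eq:certificate-bound}.

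The product $(1+\rho/\mu^*)(1+L/\rho)$ is $O(1+L/\mu^*)$ only when $\rho\in[\mu^*,L]$. For $\rho>L$ I would argue that a large $\rho$ either triggers a violation early or that $\gamma$ bounds survive. If that fails, I would instead count serious steps via the lower-bound test: a stage that runs without violation keeps $\bar f\geq f(\bar x)-4\bar\Delta$. Hence the total objective decrease in the stage is at most $4\bar\Delta$, and each serious step decreases the objective by at least $\beta\Delta_t>\beta\bar\Delta/2$. So at most $8/\beta$ serious steps occur per stage, independent of $\mu^*$. This is the route I would actually pursue.

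With this, each stage costs $O(B)$ serious-related calls plus $O(B\log_{1/\theta}((3-2\beta)^{8/\beta}\cdot2))=O(B(1+L/\widehat\rho_s))=O(B(1+L/\mu^*))$ null calls.

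\textbf{Step 4: combine.} Multiplying the per-stage cost from Step 3 by the stage count from Step 2 yields \eqref{eq:restarted-rbcer-complexity}.
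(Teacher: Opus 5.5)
Your architecture matches the paper's proof. You use the potential $\widehat\rho_s\bar\Delta$, and you get the invariant $\widehat\rho_s>\mu^*/2$ from minimality of $i$. You count successful stages by halvings of the product. For the per-stage bound, you cap serious steps at $O(1/\beta)$ via the lower-bound test, which is exactly the argument of Proposition~\ref{prop:restart-attempt}. Charging every violation stage the crude cost $O(B(1+L/\mu^*))$, instead of the paper's sharper sum $\sum_{s\in\mathcal{J}}1/\widehat\rho_s<2/\mu^*$, is harmless for this theorem. That refinement is only needed for Theorem~\ref{thm:restarted-objective-complexity}.

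The genuine gap is in Step 2. You obtain $Q\le\log_2(2\widehat\rho_1/\mu^*)$ violation stages, but you never show this is $O(1+\log_+(\norm{g(x^0)}^2/(\mu^*\eta)))$. Neither fallback you list closes it. The theorem holds for an arbitrary $\widehat\rho_1\ge\mu^*$, so no initialization convention is available. Without absorbing $Q$, your Step 4 carries an extra $B(1+L/\mu^*)\log_+(\widehat\rho_1/\mu^*)$ term that is not in \eqref{eq:restarted-rbcer-complexity}.

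The missing step is a case split on the initial certificate:
\begin{itemize}
\item If $\Delta_0\le\eta$, the initial pair $(x^0,\ell_f(\cdot;x^0))$ is already a $(\widehat\rho_1,\eta)$-RBCer, and nothing remains to count.
\item Otherwise, \eqref{eq:initial-certificate-bound} with $\rho=\widehat\rho_1$ gives $2\widehat\rho_1\eta<2\widehat\rho_1\Delta_0\le\norm{g(x^0)}^2$. Hence $\widehat\rho_1/\mu^*<\norm{g(x^0)}^2/(2\mu^*\eta)$.
\end{itemize}
The inequality you already used to bound $\Phi_1$ therefore bounds $\widehat\rho_1$ itself, and $Q$ merges into the logarithm. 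With this addition, your multiplication in Step 4 yields the stated bound.
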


\begin{thm}
\label{thm:restarted-objective-complexity}
Under the hypotheses of
Theorem~\ref{thm:restarted-rbcer-complexity}, for every
$0<\vep<f(x^0)-f^*$, Algorithm~\ref{alg:restarted-lm-pbm} produces a center
$\bar x$ satisfying $f(\bar x)-f^*\leq\vep$ after at most
\begin{equation}\label{eq:restarted-objective-complexity}
 O\left(
 B\left(1+\frac{L}{\mu^*}\right)
 \left[1+\log_+\frac{\norm{g(x^0)}^2}{\mu^*\vep}\right]
 +B\log_+\frac{\widehat\rho_1}{\mu^*}
 \right)
\end{equation}
first-order oracle calls.
\end{thm}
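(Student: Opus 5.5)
The plan is to deduce Theorem~\ref{thm:restarted-objective-complexity} from Theorem~\ref{thm:restarted-rbcer-complexity}, combined with Proposition~\ref{prop:rbcer-objective-gap}. The new ingredient is control of the proximal parameter: I will show that every stage satisfies $\widehat\rho_s\geq\mu^*/2$, or a similar constant-factor lower bound. The argument first records two invariants of Algorithm~\ref{alg:restarted-lm-pbm}. \textbf{Invariant (a):} by Lemma~\ref{lem:rbcer-scaling} and the record update~\eqref{eq:rlm-record-compression}, the pair $(\bar x,\bar\psi)$ is always a $(\widehat\rho_s,\bar\Delta)$-RBCer. \textbf{Invariant (b):} $\widehat\rho_s$ is nonincreasing. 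It is halved only when the test $f(\bar x)-4\bar\Delta>\bar f$ holds, and by~\eqref{eq:conditional-lower-bound} with $\widehat\mu=\widehat\rho_s$ this test certifies $\widehat\rho_s>\mu^*$. Each individual halving therefore starts from $\widehat\rho>\mu^*$ and divides by $2$, which yields $\widehat\rho_s>\mu^*/2$ for every stage. Together with $\widehat\rho_1\geq\mu^*$, the number of stages with $i\geq1$ is at most $\lceil\log_2(\widehat\rho_1/\mu^*)\rceil_+ +1$.

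Next I fix the target $\eta$. With $\widehat\rho_s\geq\mu^*/2$ at the certifying stage, Proposition~\ref{prop:rbcer-objective-gap} with $\widehat\mu=\mu^*$ gives
\[
 f(\bar x)-f^*\leq\max\{2,4\widehat\rho_s/\mu^*\}\eta .
\]
The danger is that $\widehat\rho_s$ may still be as large as $\widehat\rho_1$. So I would not choose $\eta\propto\vep\mu^*/\widehat\rho_1$, since that would place $\widehat\rho_1/\mu^*$ inside the main logarithm with a $B(1+L/\mu^*)$ prefactor. Instead I track the product $\widehat\rho_s\bar\Delta$, which halves at every successful stage by~\eqref{eq:record-product-contraction} and is preserved at rescalings by~\eqref{eq:restart-scaling}. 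Hence $\max\{2,4\widehat\rho_s/\mu^*\}\bar\Delta\leq 2\bar\Delta+4\widehat\rho_s\bar\Delta/\mu^*$. The goal is to make $\widehat\rho_s\bar\Delta\leq\mu^*\vep/8$ and $\bar\Delta\leq\vep/4$. Both quantities start at most of order $\norm{g(x^0)}^2$, by~\eqref{eq:initial-certificate-bound}: $\widehat\rho_1\Delta_0\leq\norm{g(x^0)}^2/2$. Driving $\widehat\rho\bar\Delta$ down to $\mu^*\vep/8$ therefore takes $O(\log_+(\norm{g(x^0)}^2/(\mu^*\vep)))$ successful stages, which matches the first term of~\eqref{eq:restarted-objective-complexity}.

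The oracle count is where I expect the main obstacle, and it comes from the stages whose $\widehat\rho_s$ is much larger than $\mu^*$. For such stages the serious-step factor $\gamma_{\widehat\rho_s}$ is close to $1$, so a stage could in principle cost $O(B\widehat\rho_s/\mu^*)$ serious steps before $\Delta_t\leq\bar\Delta/2$. The key claim to prove is that such a stage instead exits through the lower-bound violation within $O(B)$ calls. I would attempt this in two parts. First, while $\widehat\rho_s>\mu^*$, I would show that each serious step decreases $f$ by at least $\beta\Delta_t\geq\beta\bar\Delta/2$, using~\eqref{eq:proximal-bounds} while $\Delta_t>\bar\Delta/2$. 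Second, I would show that only $O(1)$ such decreases fit below the threshold $\underline f=f(\bar x)-4\bar\Delta$ before $\bar f<\underline f$. Concretely, after about $8/\beta$ serious steps the value $\bar f$ has dropped by more than $4\bar\Delta$. Null-step runs are bounded by Proposition~\ref{prop:lm-transition}: each complete group of $B+1$ null steps contracts $\Delta_t$ by $\theta_{\widehat\rho_s}\leq L/(L+(1-\beta)\mu^*/2)$, and $O(1+L/\mu^*)$ groups halve $\Delta_t$, contributing $O(B(1+L/\mu^*))$ calls between serious steps. Taken literally, this bounds a large-$\widehat\rho$ stage by $O(B(1+L/\mu^*)/\beta)$ rather than $O(B)$. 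If that cost cannot be sharpened, I would absorb it by noting that the $\widehat\rho$-decreasing stages number at most $\log_2(\widehat\rho_1/\mu^*)+1$, and then check whether the stated $B\log_+(\widehat\rho_1/\mu^*)$ term survives. Any failing stage ends with $i\geq1$, so it is charged to the $\log_+(\widehat\rho_1/\mu^*)$ budget.

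The remaining stages have $\widehat\rho_s\leq2\mu^*$ up to the constant from invariant (b), so $\gamma_{\widehat\rho_s}$ and $\theta_{\widehat\rho_s}$ are bounded by the constants of Corollary~\ref{cor:lm-tuned} with $\rho\asymp\mu^*$. Along the lines of Theorem~\ref{thm:lm-rbcer-complexity}, each such stage halves $\Delta$ in $O(B(1+L/\mu^*))$ calls. Summing this over the $O(1+\log_+(\norm{g(x^0)}^2/(\mu^*\vep)))$ product-halving stages gives the first term of~\eqref{eq:restarted-objective-complexity}. Summing over the failing stages gives the second term. Finally, Proposition~\ref{prop:rbcer-objective-gap} converts the resulting certificate into $f(\bar x)-f^*\leq\vep$.
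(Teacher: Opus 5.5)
Much of your bookkeeping matches the paper. The product $4\widehat\rho_s\bar\Delta$ halves at each RBCer update and is preserved by rescaling. The bound $\widehat\rho_s>\mu^*/2$ turns Proposition~\ref{prop:rbcer-objective-gap} into $f(\bar x)-f^*\leq 4\widehat\rho_s\bar\Delta/\mu^*$. A stage contains only $O(1)$ serious steps because their cumulative decrease is capped by $4\bar\Delta$.

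The gap is in charging the stages that end by violating the lower bound. You weaken $\theta_{\widehat\rho_s}=L/(L+(1-\beta)\widehat\rho_s)$ to $L/(L+(1-\beta)\mu^*/2)$, so each such stage costs $O(B(1+L/\mu^*))$. There can be up to $Q\leq\lceil\log_2(\widehat\rho_1/\mu^*)\rceil$ of them, so you get $O\bigl(B(1+L/\mu^*)\log_+(\widehat\rho_1/\mu^*)\bigr)$. That is a product, not the additive term $B\log_+(\widehat\rho_1/\mu^*)$ in~\eqref{eq:restarted-objective-complexity}. It cannot be absorbed into the first term: in this theorem only the product $\widehat\rho_1\Delta_0$ is controlled by $\norm{g(x^0)}^2$. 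Hence $\widehat\rho_1/\mu^*$ may be arbitrarily large while $\log_+(\norm{g(x^0)}^2/(\mu^*\vep))$ stays $O(1)$. Your proposed repair, that a large-$\widehat\rho_s$ stage exits through the lower-bound test within $O(B)$ calls, is not available. Such a stage may instead end with an RBCer update, and the only estimate is $O(B(1+L/\widehat\rho_s))$. That is $O(B)$ when $\widehat\rho_s\gtrsim L$, but not when $\mu^*<\widehat\rho_s\ll L$.

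The missing step is to keep the actual parameter in the per-stage bound, as Proposition~\ref{prop:restart-attempt} does: each stage costs $O(B(1+L/\widehat\rho_s))$. Let $\mathcal J$ be the set of stages ending without an RBCer update. Each such stage has $\widehat\rho_s>\mu^*$, since its unchanged pair is a $(\widehat\rho_s,\bar\Delta)$-RBCer and $\bar f<f(\bar x)-4\bar\Delta$ contradicts~\eqref{eq:conditional-lower-bound} otherwise. Each also forces at least one halving, and the parameter never increases. So the parameters along $\mathcal J$ at least halve from one such stage to the next, giving $\sum_{s\in\mathcal J}1/\widehat\rho_s<2/\mu^*$. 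These stages therefore cost $O(BQ+BL/\mu^*)$ in total. The $BL/\mu^*$ part is absorbed by the first term, and $BQ$ gives the second.

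Separately, your claim that the remaining stages have $\widehat\rho_s\lesssim 2\mu^*$ is false. The parameter drops only after lower-bound violations, so it may stay far above $\mu^*$ through many successful stages. This is harmless, because the $O(B(1+L/\mu^*))$ bound for those stages follows from the $O(1)$ serious-step count and $\widehat\rho_s>\mu^*/2$. It does not come from $\gamma_{\widehat\rho_s}$ being bounded away from one as in Corollary~\ref{cor:lm-tuned}, and the number of such stages is controlled by the product halving.
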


We make two remarks about these guarantees.

First, suppose a feasible point $u\in X$ with $f(u)<f(x^0)$ is available.
We can then choose an initial parameter $\widehat\rho_1\geq\mu^*$ without
knowing $\mu^*$ by setting
$
 \widehat\rho_1:=
 \frac{2\norm{g(x^0)}^2}{f(x^0)-f(u)}.
$
since
$
 f(x^0)-f^*
 \leq\inner{g(x^0)}{x^0-x^*}
 \leq\norm{g(x^0)}
 \sqrt{\frac{2(f(x^0)-f^*)}{\mu^*}}.
$

Second, rLM-PBM is an anytime, almost parameter-free method.
It can run without a target accuracy as input.  With the initialization
described above, it requires no knowledge of $L$ or $\mu^*$.
The qualifier \textit{almost} reflects the condition $B\geq k$ required
by the complexity guarantees.

\subsection{Detailed proofs}
\label{subsec:rlm-detailed-proofs}

The estimates within each stage follow from the LM-PBM analysis in
Section~\ref{sec:limited_memory_algorithm}.  Here we establish the additional
facts needed for restarts: rescaling preserves the certificate, violations
of the conditional lower bound control parameter reductions, and updates
to an RBCer with reduced $\Delta_t$ give progress across stages.

\begin{lem}\label{lem:rbcer-scaling}
Let $X$ be nonempty, closed, and convex, let $\bar x\in X$, and let
$\psi$ be a finite maximum of affine functions
satisfying $\psi(x)\leq f(x)$ for every $x\in X$ and
$\psi(\bar x)=f(\bar x)$.  For every $0<\rho'\leq\rho$,
\begin{equation}\label{eq:rbcer-scaling}
 \rho'\Delta_{\rho'}(\bar x;\psi)\leq\rho \Delta_\rho(\bar x;\psi).
\end{equation}
Consequently, if $\Delta_\rho(\bar x;\psi)\leq\bar\Delta$, then
$(\bar x,\psi)$ is a $(\rho',\rho\bar\Delta/\rho')$-RBCer.
\end{lem}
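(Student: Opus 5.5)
We prove \eqref{eq:rbcer-scaling} by writing $\rho\Delta_\rho(\bar x;\psi)$ as a single maximization and checking that its objective is pointwise nondecreasing in $\rho$. Set $h(x):=f(\bar x)-\psi(x)$. By \eqref{eq:rbcer},
\[
 \rho\Delta_\rho(\bar x;\psi)=\max_{x\in X}\Bigl\{\rho\,h(x)-\frac{\rho^2}{2}\norm{x-\bar x}^2\Bigr\}.
\]
This form is awkward, because $h$ may be negative and the quadratic term grows with $\rho$, so the objective is not monotone in $\rho$ pointwise in $x$. The plan is to reparametrize the feasible point along the segment from $\bar x$. The scaling $x-\bar x=(\rho'/\rho)(z-\bar x)$ trades the quadratic factors exactly, and convexity of $\psi$ together with exactness $\psi(\bar x)=f(\bar x)$ controls the linear part.

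\textbf{Step 1.} Let $x'$ be the (attained) maximizer defining $\Delta_{\rho'}(\bar x;\psi)$. Set $\lambda:=\rho'/\rho\in(0,1]$ and define the competitor $z:=\bar x+\lambda(x'-\bar x)$. Then $z\in X$ by convexity of $X$, since $z$ lies on the segment between $\bar x$ and $x'$.

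\textbf{Step 2.} Bound $h(z)$ from below. Convexity of $\psi$ and $\psi(\bar x)=f(\bar x)$ give
\[
 \psi(z)\le(1-\lambda)\psi(\bar x)+\lambda\psi(x')=f(\bar x)-\lambda h(x'),
\]
so $h(z)\ge\lambda h(x')$.

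\textbf{Step 3.} Plug $z$ into the definition of $\Delta_\rho$. Using $\norm{z-\bar x}^2=\lambda^2\norm{x'-\bar x}^2$ and Step 2,
\[
 \rho\Delta_\rho(\bar x;\psi)\ge\rho h(z)-\frac{\rho^2}{2}\lambda^2\norm{x'-\bar x}^2
 \ge\rho'h(x')-\frac{\rho'^2}{2}\norm{x'-\bar x}^2=\rho'\Delta_{\rho'}(\bar x;\psi).
\]
This is exactly \eqref{eq:rbcer-scaling}.

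\textbf{Step 4: the consequence.} Dividing \eqref{eq:rbcer-scaling} by $\rho'$ gives
\[
 \Delta_{\rho'}(\bar x;\psi)\le\frac{\rho}{\rho'}\Delta_\rho(\bar x;\psi)\le\frac{\rho\bar\Delta}{\rho'}.
\]
The standing requirements of Definition~\ref{def:rbcer} on $\psi$ (finite, lower semicontinuous, convex minorant) hold because $\psi$ is a finite maximum of affine minorants. Hence $(\bar x,\psi)$ is a $(\rho',\rho\bar\Delta/\rho')$-RBCer.

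\textbf{Main obstacle.} The only nontrivial point is choosing the competitor in Step 1. The naive choice $z=x'$ fails, because the quadratic coefficient $\rho^2/2$ exceeds $\rho'^2/2$ at the same point. The rescaled point $z=\bar x+(\rho'/\rho)(x'-\bar x)$ works precisely because center exactness lets convexity convert the shrinkage into the factor $\lambda$ on $h$.
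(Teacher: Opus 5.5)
Your proof is correct and follows the paper's argument essentially verbatim: the paper uses the same rescaled competitor $\bar x+(\rho'/\rho)(x-\bar x)$, the same convexity-plus-center-exactness bound on $\psi$ at that point, and the same identity $\rho(\rho'/\rho)^2=\rho'^2/\rho$ to match the quadratic terms. The only cosmetic difference is that the paper works with an arbitrary $x\in X$ and maximizes at the end, so it never needs the maximizer $x'$ to be attained. You instead invoke attainment, which is justified by the remarks following Definition~\ref{def:rbcer}.
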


\begin{proof}
Set $\alpha:=\rho'/\rho\in(0,1]$.  For any $x\in X$, the point
$x_\alpha:=\bar x+\alpha(x-\bar x)$ belongs to $X$ by convexity.
Convexity and center exactness of $\psi$ give
\[
 \psi(x_\alpha)\leq(1-\alpha)f(\bar x)+\alpha\psi(x).
\]
Also, $\norm{x_\alpha-\bar x}=\alpha\norm{x-\bar x}$ and
$\rho\alpha^2=\alpha\rho'$.  Evaluating the certificate at $x_\alpha$
therefore yields
\[
\begin{aligned}
 \Delta_\rho(\bar x;\psi)
 &\geq f(\bar x)-\psi(x_\alpha)
       -\frac{\rho}{2}\norm{x_\alpha-\bar x}^2\\
 &\geq\alpha\left[f(\bar x)-\psi(x)
       -\frac{\rho'}{2}\norm{x-\bar x}^2\right].
\end{aligned}
\]
Maximizing the right-hand side over $X$ and multiplying by $\rho$ proves
\eqref{eq:rbcer-scaling}.  The certificate values are finite because
$\psi$ has an affine lower bound and both proximal parameters are positive.
The last assertion follows from
$\Delta_{\rho'}(\bar x;\psi)\leq(\rho/\rho')\Delta_\rho(\bar x;\psi)
\leq\rho\bar\Delta/\rho'$.
\end{proof}

Taking $\rho=\widehat\rho_s$ and $\rho'=\widehat\rho_s/2^i$ proves
\eqref{eq:restart-scaling} for every integer $i\geq0$.  Thus the parameter
update and $\bar\Delta\gets2^i\bar\Delta$ preserve certificate validity and the
product $4\widehat\rho_s\bar\Delta$.  The same lemma also applies to every
intermediate halving skipped by the algorithm.

\begin{proposition}
\label{prop:restart-attempt}
At stage $s$ of Algorithm~\ref{alg:restarted-lm-pbm}, suppose that
$\Delta_{\widehat\rho_s}(\bar x;\bar\psi)\leq\bar\Delta$, $\bar\Delta>0$, and
$\bar f\geq f(\bar x)-4\bar\Delta$.  Within
\begin{equation}\label{eq:restart-attempt-complexity}
 O\left(B\left(1+\frac{L}{\widehat\rho_s}\right)\right)
\end{equation}
first-order oracle calls, the inner loop either finds
$\bar f<f(\bar x)-4\bar\Delta$ or $\Delta_t\leq\bar\Delta/2$.
\end{proposition}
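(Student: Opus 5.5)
The plan is to run the LM-PBM analysis of Section~\ref{sec:limited_memory_algorithm} inside one stage, with $\rho=\widehat\rho_s$ and initial pair $(x^0,\psi^0)=(\bar x,\bar\psi)$. The observation that makes this work is that, as long as the lower-bound test has not fired, we always have $\bar f\geq\underline f=f(\bar x)-4\bar\Delta$. This plays the role of a known lower bound and caps the total objective decrease within the stage at $4\bar\Delta$. Because $\bar\psi$ is a finite maximum of affine minorants that is exact at $\bar x$ (by \eqref{eq:aggregate-preservation} and the validity part of Lemma~\ref{lem:aggregate}, or by the initialization), Lemmas~\ref{lem:aggregate} and~\ref{lem:lm-serious-contract} and Proposition~\ref{prop:lm-transition} all apply within the stage. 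In particular, $\Delta_0=\Delta_{\widehat\rho_s}(\bar x;\bar\psi)\leq\bar\Delta$.

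\textbf{Serious steps.} I will count serious steps that occur while $\Delta_t>\bar\Delta/2$ and before exit. At each such serious step $t$, the test together with \eqref{eq:proximal-bounds} gives $f(x^t)-f(x^{t+1})\geq\beta\Delta_t>\beta\bar\Delta/2$. Every serious center is an evaluated trial point, so $\bar f\leq f(x^{t+1})$. If there were more than $8/\beta$ serious steps, the objective would decrease by more than $4\bar\Delta$, which forces $\bar f<f(\bar x)-4\bar\Delta$ and triggers exit. Hence there are $O(1/\beta)=O(1)$ serious steps. Note that this argument avoids any use of QG or $\mu^*$, which is why the bound depends only on $\widehat\rho_s$.

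\textbf{Null steps.} Between serious steps I use Proposition~\ref{prop:lm-transition}. Each run of null steps starts with $b=0$. Every complete group of $B+1$ null steps contracts $\Delta$ by $\theta_{\widehat\rho_s}$, and the remaining null steps do not increase it. At a serious step, Lemma~\ref{lem:lm-serious-contract} gives the growth bound $\Delta_{t+1}\leq(3-2\beta)\Delta_t$. After $S=O(1)$ serious steps, $\Delta_t\leq(3-2\beta)^{S}\theta^{P}\bar\Delta$, so $\Delta_t\leq\bar\Delta/2$ holds once $P\geq\log_{1/\theta}\bigl(2(3-2\beta)^{8/\beta+1}\bigr)$. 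This is $O(1/\log(1/\theta))=O(1+L/\widehat\rho_s)$, using $1/\log(1/\theta)\leq1+L/((1-\beta)\widehat\rho_s)$ as in the proof of Corollary~\ref{cor:full-memory-tuned}. The total oracle count is then at most $(B+1)P+B(S+1)+S=O(B(1+L/\widehat\rho_s))$, with $\beta$ treated as a fixed constant.

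\textbf{Main obstacle.} The delicate point is the serious-step count. It must be argued relative to the frozen $\bar\Delta$ of this stage rather than the current $\Delta_t$, and the strict-inequality exit must be checked carefully. The bound holds only while $\Delta_t>\bar\Delta/2$, but that is exactly the regime before the second exit condition fires, so the argument goes through. I also need to confirm that the $\bar f$ update in $\onestep$ records $f(y^t)$ before the exit test, so that the decrease is detected.
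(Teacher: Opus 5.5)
Your proposal is correct and follows essentially the same argument as the paper. The lower-bound test caps the stage's total decrease at $4\bar\Delta$, which gives $O(1/\beta)$ serious steps with $\Delta_t>\bar\Delta/2$; the null-step contraction over complete groups of $B+1$ steps, with the factor $3-2\beta$ at each serious step, then yields $O(1+L/\widehat\rho_s)$ groups and hence $O\bigl(B(1+L/\widehat\rho_s)\bigr)$ oracle calls. One small correction: after an update by \eqref{eq:rlm-record-compression}, exactness of $\bar\psi$ at $\bar x$ comes from the retained cut $\ell_f(\cdot;x^t)$ together with minorancy of $a^t$ (Lemma~\ref{lem:aggregate}), not from \eqref{eq:aggregate-preservation}.
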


\begin{proof}
The stage starts with $b=0$ and $\Delta_0\leq\bar\Delta$.
Initialization and~\eqref{eq:rlm-record-compression} ensure that the model
satisfies the hypotheses of Lemma~\ref{lem:aggregate}.  Hence the serious-step bound
\eqref{eq:center-change-lm}, null-step monotonicity
\eqref{eq:lm-null-monotonicity}, and contraction
\eqref{eq:lm-null-contraction} all apply with $\rho=\widehat\rho_s$.

Let $\bar x$ and $\bar\Delta$ denote their values at the start of the stage.
At every serious iteration satisfying $\Delta_t>\bar\Delta/2$ and
$\bar f\geq f(\bar x)-4\bar\Delta$, the acceptance test gives
\[
 f(x^t)-f(x^{t+1})\geq\beta \Delta_t>\frac{\beta\bar\Delta}{2}
\]
and the accepted center satisfies
$f(x^{t+1})\geq\bar f\geq f(\bar x)-4\bar\Delta$.
Thus the cumulative decrease is at most $4\bar\Delta$, allowing only $O(1)$
such serious steps, with the constant depending on $\beta$.
The first iteration satisfying either exit condition contributes at most
one additional serious step.

Applying the counting argument in the proof of
Theorem~\ref{thm:lm-rbcer-complexity}, we partition each sequence of null
steps into groups of $B+1$ steps starting with $b=0$ and at most $B$
remaining steps.  After $p$ complete groups, the current certificate satisfies
\[
 \Delta_t\leq C_\beta\bar\Delta\,\theta_{\widehat\rho_s}^{\,p},
 \qquad C_\beta\geq1.
\]
The bound on the number of serious steps depends only on $\beta$, and
each such step contributes at most a factor of $3-2\beta$.
Thus $C_\beta$ can be chosen independently of $L$ and $\widehat\rho_s$.
To obtain $\Delta_t\leq\bar\Delta/2$, it suffices to have
$C_\beta\theta_{\widehat\rho_s}^{\,p}\leq1/2$.
Since $0<\theta_{\widehat\rho_s}<1$, this is equivalent to
$p\geq\log_{1/\theta_{\widehat\rho_s}}(2C_\beta)$.
Using $\lceil a\rceil\leq a+1$ and
$1/\log(1+v)\leq1+1/v$ for $v>0$, we obtain
\[
 \left\lceil\log_{1/\theta_{\widehat\rho_s}}(2C_\beta)\right\rceil
 =\left\lceil
 \frac{\log(2C_\beta)}{\log(1+(1-\beta)\widehat\rho_s/L)}
 \right\rceil
 \leq1+\log(2C_\beta)
 \left(1+\frac{L}{(1-\beta)\widehat\rho_s}\right)
 =O\left(1+\frac{L}{\widehat\rho_s}\right).
\]
The bound on $1/\log(1+v)$ follows from
$\log(1+v)\geq v/(1+v)$.
The hidden constant depends only on $\beta$, since both $C_\beta$ and
$1/(1-\beta)$ do.  Thus this many groups suffice, unless
$\bar f<f(\bar x)-4\bar\Delta$ is detected first.
Each group contains $B+1$ null steps, and the $O(1)$ sequences contribute
only $O(B)$ remaining steps.  Including the serious steps and the final
oracle call that checks the two conditions proves
\eqref{eq:restart-attempt-complexity}.
\end{proof}

\begin{proof}[\underline{Proof of
Theorem~\ref{thm:restarted-rbcer-complexity}}]
The initial model is $\ell_f(\cdot;x^0)$, so
\eqref{eq:initial-certificate-bound} with $\rho=\widehat\rho_1$ gives
\begin{equation}\label{eq:initial-product-bound}
 2\widehat\rho_1\Delta_0\leq\norm{g(x^0)}^2.
\end{equation}
If $\Delta_0\leq\eta$, the initial pair is already a $(\widehat\rho_1,\eta)$-RBCer.
Suppose $\Delta_0>\eta$.  We count parameter reductions and updates to an RBCer
with reduced $\Delta_t$ separately, since only the latter decrease the product
$4\widehat\rho_s\bar\Delta$.

\textbf{Parameter reductions.}
Suppose the update at the end of stage $s$ selects $i\geq1$.
In this paragraph, $(\bar x,\bar\psi,\bar\Delta)$ denotes the values after
any update~\eqref{eq:rlm-record-compression} and before rescaling.
Minimality of $i$ gives, for every $j=0,\ldots,i-1$,
\[
 \bar f<f(\bar x)-4\cdot2^j\bar\Delta.
\]
Lemma~\ref{lem:rbcer-scaling} also gives
$\Delta_{\widehat\rho_s/2^j}(\bar x;\bar\psi)\leq2^j\bar\Delta$.
If $\widehat\rho_s/2^j\leq\mu^*$, Proposition~\ref{prop:rbcer-objective-gap}
would imply
\[
 f(\bar x)-f^*\leq4\cdot2^j\bar\Delta,
 \qquad
 f^*\leq\bar f<f(\bar x)-4\cdot2^j\bar\Delta\leq f^*,
\]
a contradiction.  Thus each halving starts from a parameter strictly
larger than $\mu^*$.  This argument covers all $i$ halvings in a single
update, even though the algorithm does not solve the intermediate
proximal subproblems.

In particular, the last halving leaves a parameter above $\mu^*/2$.
Stages with $i=0$ keep their parameter unchanged.  Together with
$\widehat\rho_1\geq\mu^*$, this proves
\begin{equation}\label{eq:restarted-estimate-range}
 \widehat\rho_s>\frac{\mu^*}{2},\qquad s\geq1.
\end{equation}
Let $Q$ denote the total number of individual parameter halvings across
all stages, so an update by a factor of $2^i$ counts as $i$ halvings.
If $Q\geq1$, the parameter immediately before the last halving is
$\widehat\rho_1/2^{Q-1}>\mu^*$.  Therefore,
\begin{equation}\label{eq:qg-reduction-count}
 Q\leq\left\lceil\log_2\frac{\widehat\rho_1}{\mu^*}\right\rceil,
\end{equation}
which also holds when $Q=0$.  The final trial parameter may remain above
$\mu^*$.

\textbf{Updates to an RBCer with reduced $\Delta_t$.}
Rescaling may increase $\bar\Delta$, so we track
$4\widehat\rho_s\bar\Delta$ across stages.  Lemma~\ref{lem:rbcer-scaling}
preserves the certificate under rescaling, and the reciprocal changes
in $\widehat\rho_s$ and $\bar\Delta$ leave their product unchanged.
Each such update reduces this product by at least half, by
\eqref{eq:record-product-contraction}.  Therefore, after $A$ such updates
and the subsequent rescaling,
\begin{equation}\label{eq:restarted-gap-product}
 4\widehat\rho_s\bar\Delta\leq4\widehat\rho_1\Delta_0\,2^{-A}.
\end{equation}
Combining this inequality with~\eqref{eq:restarted-estimate-range} gives
\[
 \bar\Delta<\frac{2\widehat\rho_1\Delta_0}{\mu^*}\,2^{-A}.
\]
Thus
$O\bigl(1+\log_+(\widehat\rho_1\Delta_0/(\mu^*\eta))\bigr)$ such updates
suffice to obtain $\bar\Delta\leq\eta$ and hence a $(\widehat\rho_s,\eta)$-RBCer.
While $\bar\Delta>\eta$, Proposition~\ref{prop:restart-attempt} ensures that
each stage ends after finitely many calls.  Only finitely many stages can
end without such an update, because each forces a halving and $Q$ is finite.
The required updates therefore occur unless the target is met earlier.

\textbf{Oracle calls across stages.}
By Proposition~\ref{prop:restart-attempt} and
\eqref{eq:restarted-estimate-range}, each stage ending with an update to
an RBCer with reduced $\Delta_t$ costs $O\bigl(B(1+L/\mu^*)\bigr)$ calls.
For stages ending without such an update, we can sum a sharper bound.
Each such stage violates the conditional lower bound and forces at least
one halving, so there are at most $Q$ of them and their trial parameters
are all greater than $\mu^*$.  Moreover, these parameters decrease by at
least a factor of two from one such stage to the next.  Any intervening
RBCer updates can only keep or decrease the parameter.
Consequently, if $\mathcal J$ is the set of stages ending without such an
update, summing the reciprocals backward gives
\[
 \sum_{s\in\mathcal J}\frac{1}{\widehat\rho_s}<\frac{2}{\mu^*}.
\]
Summing~\eqref{eq:restart-attempt-complexity} over these stages therefore
costs $O(BQ+BL/\mu^*)$ calls.  The total work through $A$ updates to an RBCer
with reduced $\Delta_t$ is consequently bounded by
\begin{equation}\label{eq:restarted-work-count}
 O\left(B\left(1+\frac{L}{\mu^*}\right)(A+1)+BQ\right).
\end{equation}

Finally, $\Delta_0>\eta$ and~\eqref{eq:initial-product-bound} imply
$\norm{g(x^0)}^2/(\mu^*\eta)>2\widehat\rho_1/\mu^*$.
Thus both the required number of such RBCer updates and $Q$ are bounded by
$O\bigl(1+\log_+(\norm{g(x^0)}^2/(\mu^*\eta))\bigr)$.
Substitution into~\eqref{eq:restarted-work-count} proves
\eqref{eq:restarted-rbcer-complexity}.
\end{proof}

\begin{proof}[\underline{Proof of
Theorem~\ref{thm:restarted-objective-complexity}}]
For objective accuracy, we use the product in
\eqref{eq:restarted-gap-product}, which controls the objective gap even
as the proximal parameter changes.  After $A$ updates to an RBCer with
reduced $\Delta_t$ and rescaling, Proposition~\ref{prop:rbcer-objective-gap} with
$\widehat\mu=\mu^*$ gives
\[
 f(\bar x)-f^*
 \leq\max\left\{2,\frac{4\widehat\rho_s}{\mu^*}\right\}\bar\Delta
 =\frac{4\widehat\rho_s\bar\Delta}{\mu^*}
 \leq\frac{4\widehat\rho_1\Delta_0}{\mu^*}\,2^{-A},
\]
where the equality follows from~\eqref{eq:restarted-estimate-range}.
Thus $O\bigl(1+\log_+(\widehat\rho_1\Delta_0/(\mu^*\vep))\bigr)$ such updates
suffice for $f(\bar x)-f^*\leq\vep$.
Using~\eqref{eq:initial-product-bound}, this number is
$O\bigl(1+\log_+(\norm{g(x^0)}^2/(\mu^*\vep))\bigr)$.
Substituting it into~\eqref{eq:restarted-work-count} and applying
\eqref{eq:qg-reduction-count} proves
\eqref{eq:restarted-objective-complexity}.
\end{proof}

\section{Acknowledgment}
GPT-5.6 Pro provided substantial assistance in deriving the proximal bridge
construction used in the proof.  The authors verified the mathematical
arguments and revised the exposition for clarity and readability.  
The authors take full responsibility for the final manuscript.

\renewcommand \thepart{}
\renewcommand \partname{}

\bibliographystyle{abbrvnat}
\bibliography{ref.bib}

@article{guigues2024universal,
  title={Universal subgradient and proximal bundle methods for convex and strongly convex hybrid composite optimization},
  author={Guigues, Vincent and Liang, Jiaming and Monteiro, Renato D. C.},
  journal={Journal of Optimization Theory and Applications},
  volume={208},
  number={3},
  year={2026},
  doi={10.1007/s10957-025-02927-7},
  publisher={Springer Science and Business Media LLC}
}

@article{du2017rate,
  title={Rate of convergence of the bundle method},
  author={Du, Yu and Ruszczy{\'n}ski, Andrzej},
  journal={Journal of Optimization Theory and Applications},
  volume={173},
  number={3},
  pages={908--922},
  year={2017},
  publisher={Springer},
  doi={10.1007/s10957-017-1108-1}
}

@misc{partOne,
      title={Accelerated Prox-Level Methods for Unknown Piecewise-Smooth Optimization {I}: Convex Optimization}, 
      author={Zhenwei Lin and Zhe Zhang},
      year={2026},
      eprint={2601.14680},
      archivePrefix={arXiv},
      primaryClass={math.OC},
      url={https://arxiv.org/abs/2601.14680}, 
}

@misc{partTwo,
      title={Accelerated Prox-Level Methods for Unknown Piecewise-Smooth Optimization {II}: Function-Constrained Optimization},
      author={Zhenwei Lin and Zhe Zhang},
      year={2026},
      eprint={2609.03251},
      archivePrefix={arXiv},
      primaryClass={math.OC},
      doi={10.48550/arXiv.2609.03251},
      url={https://arxiv.org/abs/2609.03251},
}

@misc{burns2026accuracy,
      title={Accuracy Certificates for Convex Optimization at Accelerated Rates via Primal-Dual Averaging},
      author={Matthew X. Burns and Jiaming Liang},
      year={2026},
      eprint={2604.18321},
      archivePrefix={arXiv},
      primaryClass={math.OC},
      url={https://arxiv.org/abs/2604.18321},
}

@article{zhang2025linearly,
  title={Linearly Convergent Algorithms for Nonsmooth Problems with Unknown Smooth Pieces},
  author={Zhang, Zhe and Sra, Suvrit},
  journal={arXiv preprint arXiv:2507.19465},
  year={2025}
}

@article{oliveira2014bundle,
  title={Bundle methods in the XXIst century: A bird's-eye view},
  author={Oliveira, Welington de and Sagastiz{\'a}bal, Claudia},
  journal={Pesquisa Operacional},
  volume={34},
  number={3},
  pages={647--670},
  year={2014},
  publisher={SciELO Brasil}
}

@article{de2014level,
  title={Level bundle methods for oracles with on-demand accuracy},
  author={de Oliveira, Welington and Sagastiz{\'a}bal, CLAUDIA},
  journal={Optimization Methods and Software},
  volume={29},
  number={6},
  pages={1180--1209},
  year={2014},
  publisher={Taylor \& Francis}
}

@article{mifflin1977algorithm,
  title={An algorithm for constrained optimization with semismooth functions},
  author={Mifflin, Robert},
  journal={Mathematics of Operations Research},
  volume={2},
  number={2},
  pages={191--207},
  year={1977},
  publisher={INFORMS}
}

@article{de2014convex,
  title={Convex proximal bundle methods in depth: a unified analysis for inexact oracles},
  author={de Oliveira, Welington and Sagastiz{\'a}bal, Claudia and Lemar{\'e}chal, Claude},
  journal={Mathematical Programming},
  volume={148},
  number={1},
  pages={241--277},
  year={2014},
  publisher={Springer}
}

@article{kelley1960cutting,
  title={The cutting-plane method for solving convex programs},
  author={Kelley, Jr, James E},
  journal={Journal of the society for Industrial and Applied Mathematics},
  volume={8},
  number={4},
  pages={703--712},
  year={1960},
  publisher={SIAM}
}

@article{ben2005non,
  title={Non-Euclidean restricted memory level method for large-scale convex optimization},
  author={Ben-Tal, Aharon and Nemirovski, Arkadi},
  journal={Mathematical Programming},
  volume={102},
  number={3},
  pages={407--456},
  year={2005},
  publisher={Springer}
}

@article{kiwiel2000efficiency,
  title={Efficiency of proximal bundle methods},
  author={Kiwiel, Krzysztof C},
  journal={Journal of Optimization Theory and Applications},
  volume={104},
  number={3},
  pages={589--603},
  year={2000},
  publisher={Springer}
}

@article{sion1958general,
  title={On general minimax theorems},
  author={Sion, Maurice},
  journal={Pacific Journal of Mathematics},
  volume={8},
  number={1},
  pages={171--176},
  year={1958},
  url={https://msp.org/pjm/1958/8-1/pjm-v8-n1-p14-s.pdf}
}

@article{kiwiel1983aggregate,
  title={An aggregate subgradient method for nonsmooth convex minimization},
  author={Kiwiel, Krzysztof C.},
  journal={Mathematical Programming},
  volume={27},
  number={3},
  pages={320--341},
  year={1983},
  doi={10.1007/BF02591907},
  url={https://doi.org/10.1007/BF02591907},
  publisher={Springer}
}

@article{liang2021proximal,
  title={A proximal bundle variant with optimal iteration-complexity for a large range of prox stepsizes},
  author={Liang, Jiaming and Monteiro, Renato DC},
  journal={SIAM Journal on Optimization},
  volume={31},
  number={4},
  pages={2955--2986},
  year={2021},
  publisher={SIAM}
}

@article{atenas2023unified,
  title={A unified analysis of descent sequences in weakly convex optimization, including convergence rates for bundle methods},
  author={Atenas, Felipe and Sagastiz{\'a}bal, Claudia and Silva, Paulo JS and Solodov, Mikhail},
  journal={SIAM Journal on Optimization},
  volume={33},
  number={1},
  pages={89--115},
  year={2023},
  publisher={SIAM}
}

@book{bagirov2014introduction,
  title={Introduction to Nonsmooth Optimization: theory, practice and software},
  author={Bagirov, Adil and Karmitsa, Napsu and M{\"a}kel{\"a}, Marko M},
  volume={12},
  year={2014},
  publisher={Springer}
}

@article{de2019proximal,
  title={Proximal bundle methods for nonsmooth DC programming},
  author={de Oliveira, Welington},
  journal={Journal of Global Optimization},
  volume={75},
  number={2},
  pages={523--563},
  year={2019},
  publisher={Springer}
}

@article{hare2010redistributed,
  title={A redistributed proximal bundle method for nonconvex optimization},
  author={Hare, Warren and Sagastiz{\'a}bal, Claudia},
  journal={SIAM Journal on Optimization},
  volume={20},
  number={5},
  pages={2442--2473},
  year={2010},
  publisher={SIAM}
}

@article{diaz2023optimal,
  title={Optimal convergence rates for the proximal bundle method},
  author={D{\'\i}az, Mateo and Grimmer, Benjamin},
  journal={SIAM Journal on Optimization},
  volume={33},
  number={2},
  pages={424--454},
  year={2023},
  publisher={SIAM},
  doi={10.1137/21M1428601}
}

@article{kiwiel1995proximal,
  title={Proximal level bundle methods for convex nondifferentiable optimization, saddle-point problems and variational inequalities},
  author={Kiwiel, Krzysztof C},
  journal={Mathematical Programming},
  volume={69},
  number={1},
  pages={89--109},
  year={1995},
  publisher={Springer}
}

@article{deng2024uniformly,
author = {Deng, Qi and Lan, Guanghui and Lin, Zhenwei},
title = {Uniformly Optimal and Parameter-Free First-Order Methods for Convex and Function-Constrained Optimization},
journal = {INFORMS Journal on Computing},
year = {2026},
doi = {10.1287/ijoc.2025.1177},

URL = { 
    
        https://doi.org/10.1287/ijoc.2025.1177
    
    

},
eprint = { 
    
        https://doi.org/10.1287/ijoc.2025.1177
    
    

}
}

@article{lan2015bundle,
  title={Bundle-level type methods uniformly optimal for smooth and nonsmooth convex optimization},
  author={Lan, Guanghui},
  journal={Mathematical Programming},
  volume={149},
  number={1},
  pages={1--45},
  year={2015},
  publisher={Springer}
}

@article{tibshirani1996regression,
  title={Regression shrinkage and selection via the {L}asso},
  author={Tibshirani, Robert},
  journal={Journal of the Royal Statistical Society Series B: Statistical Methodology},
  volume={58},
  number={1},
  pages={267--288},
  year={1996},
  publisher={Oxford University Press}
}

@article{kouvaritakis2016model,
  title={Model predictive control},
  author={Kouvaritakis, Basil and Cannon, Mark},
  journal={Switzerland: Springer International Publishing},
  volume={38},
  number={13-56},
  pages={7},
  year={2016},
  publisher={Springer}
}

@article{lemarechal1995new,
  title={New variants of bundle methods},
  author={Lemar{\'e}chal, Claude and Nemirovskii, Arkadii and Nesterov, Yurii},
  journal={Mathematical Programming},
  volume={69},
  number={1},
  pages={111--147},
  year={1995},
  publisher={Springer}
}

@article{liang2023proximal,
  title={Proximal bundle methods for hybrid weakly convex composite optimization problems},
  author={Liang, Jiaming and Monteiro, Renato DC and Zhang, Honghao},
  journal={arXiv preprint arXiv:2303.14896},
  year={2023}
}

@article{frangioni2020standard,
  title={Standard bundle methods: Untrusted models and duality},
  author={Frangioni, Antonio},
  journal={Numerical nonsmooth optimization: state of the art algorithms},
  pages={61--116},
  year={2020},
  publisher={Springer}
}

@misc{jiang2026optimal,
  title={Optimal Parameter-Free First-Order Methods for Convex Optimization with Unknown Growth and Smoothness},
  author={Jiang, Liwei and Tang, Ke and Zhang, Zhe},
  year={2026},
  eprint={2607.11878},
  archivePrefix={arXiv},
  primaryClass={math.OC},
  doi={10.48550/arXiv.2607.11878}
}

@article{dantzig1955linear,
  title={Linear programming under uncertainty},
  author={Dantzig, George B},
  journal={Management science},
  volume={1},
  number={3-4},
  pages={197--206},
  year={1955},
  publisher={Informs}
}

@article{liang2024unified,
  title={A unified analysis of a class of proximal bundle methods for solving hybrid convex composite optimization problems},
  author={Liang, Jiaming and Monteiro, Renato DC},
  journal={Mathematics of Operations Research},
  volume={49},
  number={2},
  pages={832--855},
  year={2024},
  publisher={INFORMS},
  doi={10.1287/moor.2023.1372}
}

@article{liang2024primaldual,
  title={Primal-dual proximal bundle and conditional gradient methods for convex problems},
  author={Liang, Jiaming},
  journal={Mathematical Programming},
  year={2025},
  doi={10.1007/s10107-025-02307-z},
  url={https://doi.org/10.1007/s10107-025-02307-z}
}

@article{frangioni2002generalized,
  title={Generalized bundle methods},
  author={Frangioni, Antonio},
  journal={SIAM Journal on Optimization},
  volume={13},
  number={1},
  pages={117--156},
  year={2002}
}

@article{bonnans1995variable,
  title={A family of variable metric proximal methods},
  author={Bonnans, J.-F. and Gilbert, J.-C. and Lemar{\'e}chal, Claude and Sagastiz{\'a}bal, Claudia A.},
  journal={Mathematical Programming},
  volume={68},
  pages={15--47},
  year={1995}
}

@article{lemarechal1997variable,
  title={Variable metric bundle methods: From conceptual to implementable forms},
  author={Lemar{\'e}chal, Claude and Sagastiz{\'a}bal, Claudia},
  journal={Mathematical Programming},
  volume={76},
  pages={393--410},
  year={1997}
}

@article{oliveira2016doubly,
  title={A doubly stabilized bundle method for nonsmooth convex optimization},
  author={de Oliveira, Welington and Solodov, Mikhail V.},
  journal={Mathematical Programming},
  volume={156},
  pages={125--159},
  year={2016}
}

@article{hare2016inexact,
  title={A proximal bundle method for nonsmooth nonconvex functions with inexact information},
  author={Hare, Warren and Sagastiz{\'a}bal, Claudia and Solodov, Mikhail V.},
  journal={Computational Optimization and Applications},
  volume={63},
  pages={1--28},
  year={2016},
  doi={10.1007/s10589-015-9762-4}
}

@article{liang2024singlecut,
  title={A single cut proximal bundle method for stochastic convex composite optimization},
  author={Liang, Jiaming and Guigues, Vincent and Monteiro, Renato D. C.},
  journal={Mathematical Programming},
  volume={208},
  number={1},
  pages={173--208},
  year={2024},
  doi={10.1007/s10107-023-02035-2}
}

@misc{fersztand2024frankwolfe,
  title={An absolute-error proximal bundle method through the lens of {Frank--Wolf}},
  author={Fersztand, David and Sun, Xu Andy},
  year={2024},
  eprint={2411.15926},
  archivePrefix={arXiv},
  primaryClass={math.OC},
  url={https://arxiv.org/abs/2411.15926}
}

@article{kiwiel1990proximity,
  title={Proximity control in bundle methods for convex nondifferentiable minimization},
  author={Kiwiel, Krzysztof C.},
  journal={Mathematical Programming},
  volume={46},
  pages={105--122},
  year={1990},
  doi={10.1007/BF01585731}
}

@article{kiwiel1991exactpenalty,
  title={Exact penalty functions in proximal bundle methods for constrained convex nondifferentiable minimization},
  author={Kiwiel, Krzysztof C.},
  journal={Mathematical Programming},
  volume={52},
  pages={285--302},
  year={1991},
  doi={10.1007/BF01582892}
}

@article{kiwiel1995approximations,
  title={Approximations in proximal bundle methods and decomposition of convex programs},
  author={Kiwiel, Krzysztof C.},
  journal={Journal of Optimization Theory and Applications},
  volume={84},
  number={3},
  pages={529--548},
  year={1995},
  doi={10.1007/BF02191984}
}

@article{kiwiel1996restricted,
  title={Restricted step and {Levenberg--Marquardt} techniques in proximal bundle methods for nonconvex nondifferentiable optimization},
  author={Kiwiel, Krzysztof C.},
  journal={SIAM Journal on Optimization},
  volume={6},
  number={1},
  pages={227--249},
  year={1996}
}

@article{kiwiel1999bregman,
  title={A bundle {Bregman} proximal method for convex nondifferentiable minimization},
  author={Kiwiel, Krzysztof C.},
  journal={Mathematical Programming},
  volume={85},
  number={2},
  pages={241--258},
  year={1999},
  doi={10.1007/s101070050056}
}

@article{hintermuller2001approximate,
  title={A proximal bundle method based on approximate subgradients},
  author={Hinterm{\"u}ller, Michael},
  journal={Computational Optimization and Applications},
  volume={20},
  number={3},
  pages={245--266},
  year={2001},
  doi={10.1023/A:1011259017643}
}

@article{kiwiel2006approximate,
  title={A proximal bundle method with approximate subgradient linearizations},
  author={Kiwiel, Krzysztof C.},
  journal={SIAM Journal on Optimization},
  volume={16},
  number={4},
  pages={1007--1023},
  year={2006},
  doi={10.1137/040603929}
}

@article{haarala2007limited,
  title={Globally convergent limited memory bundle method for large-scale nonsmooth optimization},
  author={Haarala, N. and Miettinen, Kaisa and M{\"a}kel{\"a}, Marko M.},
  journal={Mathematical Programming},
  volume={109},
  number={1},
  pages={181--205},
  year={2007}
}

@article{sagastizabal2013composite,
  title={Composite proximal bundle method},
  author={Sagastiz{\'a}bal, Claudia},
  journal={Mathematical Programming},
  volume={140},
  number={1},
  pages={189--233},
  year={2013},
  doi={10.1007/s10107-012-0600-5}
}

@article{yang2014constrained,
  title={Constrained nonconvex nonsmooth optimization via proximal bundle method},
  author={Yang, Yang and Pang, Liping and Ma, Xuefei and Shen, Jie},
  journal={Journal of Optimization Theory and Applications},
  volume={163},
  number={3},
  pages={900--925},
  year={2014},
  doi={10.1007/s10957-014-0523-9}
}

@article{joki2017dc,
  title={A proximal bundle method for nonsmooth {DC} optimization utilizing nonconvex cutting planes},
  author={Joki, Kaisa and Bagirov, Adil M. and Karmitsa, Napsu and M{\"a}kel{\"a}, Marko M.},
  journal={Journal of Global Optimization},
  volume={68},
  number={3},
  pages={501--535},
  year={2017},
  doi={10.1007/s10898-016-0488-3}
}

@article{lv2018constrained,
  title={A proximal bundle method for constrained nonsmooth nonconvex optimization with inexact information},
  author={Lv, Jian and Pang, Liping and Meng, Fanyun},
  journal={Journal of Global Optimization},
  volume={70},
  number={3},
  pages={517--549},
  year={2018},
  doi={10.1007/s10898-017-0565-2}
}

@article{pang2023nonconvex,
  title={A proximal bundle method for a class of nonconvex nonsmooth composite optimization problems},
  author={Pang, Liping and Wang, Xiaoliang and Meng, Fanyun},
  journal={Journal of Global Optimization},
  volume={86},
  pages={589--620},
  year={2023},
  doi={10.1007/s10898-023-01279-8}
}

@article{fischer2025asynchronous,
  title={An asynchronous proximal bundle method},
  author={Fischer, Frank},
  journal={Mathematical Programming},
  volume={209},
  pages={825--857},
  year={2025},
  doi={10.1007/s10107-024-02088-x}
}

@misc{guigues2024adaptive,
  title={Complexity and numerical experiments of a new adaptive generic proximal bundle method},
  author={Guigues, Vincent and Monteiro, Renato D. C. and Tran, Benoit},
  year={2024},
  eprint={2410.11066},
  archivePrefix={arXiv},
  primaryClass={math.OC},
  url={https://arxiv.org/abs/2410.11066}
}

@incollection{lemarechal1975extension,
  author    = {Lemar{\'e}chal, Claude},
  title     = {An Extension of {Davidon} Methods to
               Non Differentiable Problems},
  booktitle = {Nondifferentiable Optimization},
  editor    = {Balinski, Michel L. and Wolfe, Philip},
  series    = {Mathematical Programming Studies},
  volume    = {3},
  pages     = {95--109},
  publisher = {North-Holland},
  address   = {Amsterdam},
  year      = {1975},
  doi       = {10.1007/BFb0120700}
}

@incollection{wolfe1975method,
  author    = {Wolfe, Philip},
  title     = {A Method of Conjugate Subgradients for
               Minimizing Nondifferentiable Functions},
  booktitle = {Nondifferentiable Optimization},
  editor    = {Balinski, Michel L. and Wolfe, Philip},
  series    = {Mathematical Programming Studies},
  volume    = {3},
  pages     = {145--173},
  publisher = {North-Holland},
  address   = {Amsterdam},
  year      = {1975},
  doi       = {10.1007/BFb0120703}
}

@article{feltenmark2000dual,
  title={Dual Applications of Proximal Bundle Methods, Including Lagrangian Relaxation of Nonconvex Problems},
  author={Feltenmark, Stefan and Kiwiel, Krzysztof C.},
  journal={SIAM Journal on Optimization},
  volume={10},
  number={3},
  pages={697--721},
  year={2000},
  doi={10.1137/S1052623498332336}
}

@article{bacaud2001bundle,
  title={Bundle Methods in Stochastic Optimal Power Management: A Disaggregated Approach Using Preconditioners},
  author={Bacaud, L{\'e}onard and Lemar{\'e}chal, Claude and Renaud, Arnaud and Sagastiz{\'a}bal, Claudia},
  journal={Computational Optimization and Applications},
  volume={20},
  number={3},
  pages={227--244},
  year={2001},
  doi={10.1023/A:1011202900805}
}

@article{frangioni1999bundle,
  title={A Bundle Type Dual-Ascent Approach to Linear Multicommodity Min-Cost Flow Problems},
  author={Frangioni, Antonio and Gallo, Giorgio},
  journal={INFORMS Journal on Computing},
  volume={11},
  number={4},
  pages={370--393},
  year={1999},
  doi={10.1287/ijoc.11.4.370}
}

@article{deoliveira2011inexact,
  title={Inexact Bundle Methods for Two-Stage Stochastic Programming},
  author={Oliveira, Welington and Sagastiz{\'a}bal, Claudia and Scheimberg, Susana},
  journal={SIAM Journal on Optimization},
  volume={21},
  number={2},
  pages={517--544},
  year={2011},
  doi={10.1137/100808289}
}

@article{ning2020transformation,
  title={A Transformation-Proximal Bundle Algorithm for Multistage Adaptive Robust Optimization and Application to Constrained Robust Optimal Control},
  author={Ning, Chao and You, Fengqi},
  journal={Automatica},
  volume={113},
  pages={108802},
  year={2020},
  doi={10.1016/j.automatica.2019.108802}
}

@article{kim2022scalable,
  title={Scalable Branching on Dual Decomposition of Stochastic Mixed-Integer Programming Problems},
  author={Kim, Kibaek and Dandurand, Brian},
  journal={Mathematical Programming Computation},
  volume={14},
  number={1},
  pages={1--41},
  year={2022},
  doi={10.1007/s12532-021-00212-y}
}

@inproceedings{borndorfer2024electric,
  title={Solving the Electric Bus Scheduling Problem by an Integrated Flow and Set Partitioning Approach},
  author={Bornd{\"o}rfer, Ralf and L{\"o}bel, Andreas and L{\"o}bel, Fabian and Weider, Steffen},
  booktitle={24th Symposium on Algorithmic Approaches for Transportation Modelling, Optimization, and Systems},
  series={Open Access Series in Informatics},
  volume={123},
  pages={11:1--11:16},
  year={2024},
  publisher={Schloss Dagstuhl--Leibniz-Zentrum f{\"u}r Informatik},
  doi={10.4230/OASIcs.ATMOS.2024.11}
}

@inproceedings{liao2026accelerated,
  title={An Accelerated Proximal Bundle Method for Convex Optimization},
  author={Liao, Feng-Yi and Madden, Thomas and Zheng, Yang},
  booktitle={Proceedings of The 8th Annual Learning for Dynamics and Control Conference},
  series={Proceedings of Machine Learning Research},
  volume={331},
  pages={1012--1034},
  year={2026},
  publisher={PMLR},
  url={https://proceedings.mlr.press/v331/liao26a.html}
}

@article{fersztand2025acceleration,
  title={On the acceleration of proximal bundle methods},
  author={Fersztand, David and Sun, Xu Andy},
  journal={arXiv preprint arXiv:2504.20351},
  year={2025}
}

% \doparttoc
% \faketableofcontents
% \part{}

% \newpage
% \appendix

\addcontentsline{toc}{section}{Appendix}
% \part{Appendix} 
% \parttoc

% \input{./appendix.tex}

\end{document}